\documentclass{amsart}


\usepackage{amsthm,amscd,amssymb,amsxtra,amsbsy}
\usepackage{url}
\usepackage{verbatim}
\usepackage{graphicx}
\usepackage{hyperref}

\hypersetup{colorlinks=true}

\newtheorem{Thm}{Theorem}
\newtheorem{Prop}[Thm]{Proposition}
\newtheorem{Lem}[Thm]{Lemma}
\newtheorem{Cor}[Thm]{Corollary}
\newtheorem{Conj}[Thm]{Conjecture}

\DeclareMathOperator{\Hom}{Hom}
\DeclareMathOperator{\End}{End}
\DeclareMathOperator{\Aut}{Aut}
\DeclareMathOperator{\Ext}{Ext}

\newcommand{\dT}{\frac{\mathrm d}{\mathrm dT}}

\newcommand{\bQ}{\mathbb Q}
\newcommand{\modcat}{\bmod}
\newcommand{\HA}{\mathcal H}
\newcommand{\Cat}{\mathcal C}

\newcommand{\blambda}{\boldsymbol\lambda}
\newcommand{\bmu}{\boldsymbol\mu}
\newcommand{\bxi}{\boldsymbol\xi}

\title{Ringel-Hall Algebras of Cyclic Quivers}
\author{Andrew Hubery}
\address{University of Leeds\\Leeds\\U.K.}
\email{a.w.hubery@leeds.ac.uk}
\subjclass[2000]{Primary  16G20, 05E05; Secondary 16W30, 17B37}
\date{}

\begin{document}

\maketitle

\section{Introduction}

The Hall algebra, or algebra of partitions, was originally constructed in the context of abelian $p$-groups, and has a history going back to a talk by Steinitz \cite{Steinitz}. This work was largely forgotten, leaving Hall to rediscover the algebra fifty years later \cite{Hall}. (See also the articles \cite{Johnsen, Macdonald1}.) The Hall algebra is naturally isomorphic to the ring of symmetric functions, and in fact this is an isomorphism of self-dual graded Hopf algebras.

The basic idea is to count short exact sequences with fixed isomorphism classes of $p$-groups and then to use these numbers as the structure constants for an algebra. This idea was picked up again by Ringel \cite{Ringel1} for more general module categories, and in particular the category of finite dimensional representations of a quiver (over a finite field).

Ringel's work built on some remarkable results relating quiver representations to symmetrisable Kac-Moody Lie algebras, beginning with \cite{Gabriel, BGP, Donovan-Freislich, Nazarova, Dlab-Ringel} and culminating in Kac's Theorem \cite{Kac1}, which states that over an algebraically closed field, the dimension vector (or image in the Grothendieck group) gives a surjection from the set of isomorphism classes of indecomposable representations to the set of positive roots of the associated root system. This root system can also be realised as that coming from a symmetrisable Kac-Moody Lie algebra \cite{Kac2}.

This connection was extended by Ringel in a series of papers \cite{Ringel1, Ringel2, Ringel3, Ringel4, Ringel7, Ringel8} where he constructed the Ringel-Hall algebra and studied its properties, in particular proving the existence of Hall polynomials for representation-directed algebras. Moreover, if one specialises these polynomials at 1, then the indecomposable modules yield a Lie subalgebra with universal enveloping algebra the whole Ringel-Hall algebra.

This work was later generalised in two different ways. In \cite{Ringel5, Riedtmann, Schofield} the Lie algebra/universal enveloping algebra approach was taken further, with Riedtmann and Schofield replacing the evaluation of polynomials at 1 with the Euler characteristic of certain varieties. In particular, Schofield proves that one can recover the universal enveloping algebra of an arbitrary symmetric Kac-Moody Lie algebra by studying the variety of quiver representations over the field of complex numbers.

On the other hand, Green proved in \cite{Green1} that the Ringel-Hall algebra can be endowed with a comultiplication such that it becomes a twisted bialgebra. He then related the composition subalgebra to the positive part of the quantum group for the corresponding symmetrisable Kac-Moody Lie algebra (see for example \cite{Lusztig6}). Sevenhant and Van den Bergh \cite{Sevenhant-VandenBergh2} took this further and showed that the whole Ringel-Hall algebra can be viewed as the positive part of the quantised enveloping algebra of a Borcherds Lie algebra. These results deepened the connections between quantum groups and representations of quivers, and led to the introduction of Lusztig's canonical basis \cite{Lusztig2, Lusztig3, Lusztig4, Lusztig7}.

Completing the circle, Deng and Xiao showed in \cite{Deng-Xiao2} how the Ringel-Hall algebra could be used to provide a different proof of Kac's Theorem, and actually improve upon Kac's original result, since they show that the dimension vector map from indecomposable representations to positive roots is surjective for any finite field.

The Ringel-Hall algebra construction carries over to any exact hereditary category \cite{Hubery2}, and in particular to the categories of coherent sheaves over smooth projective curves. The case of $\mathbb P^1$ has been extensively studied in \cite{Kapranov, Baumann-Kassel}, and Schiffmann has considered weighted projective lines \cite{Schiffmann3} and elliptic curves \cite{Burban-Schiffmann, Schiffmann5, Schiffmann-Vasserot}, the latter together with Burban and Vasserot. Joyce has also consider Ringel-Hall algebras in the context of configurations of abelian categories \cite{Joyce}.

We also mention work of Reineke \cite{Reineke4, Reineke5, Reineke6, Reineke7, Reineke8} and Reineke and Caldero \cite{Caldero-Reineke1, Caldero-Reineke2} for other interesting occurrences of Ringel-Hall algebras, especially with regard to answering questions in algebraic geometry. Furthermore, there has been some recent work by Caldero and Chapoton relating Ringel-Hall algebras to cluster algebras (see also \cite{Hubery4}). On the other hand, To\"en has shown how to construct a Ringel-Hall algebra from a dg-category \cite{Toen}, a result which has subsequently been extended by Xiao and Xu to more general triangulated categories \cite{Xiao-Xu}.

Our aim in these notes is to present some of this rich theory in the special case of a cyclic quiver. In this case one has a strong connection to the theory of symmetric functions, and we describe this quite thoroughly in the classical case, where the quiver has just a single vertex and a single loop. Our presentation is chosen such that the methods generalise to larger cyclic quivers, and in particular we emphasise the Hopf algebra structure. In the general case we outline a proof that the centre of the Ringel-Hall algebra is isomorphic to the ring of symmetric functions (after extending scalars). This proof is different from Schiffmann's original approach \cite{Schiffmann1}, which relied heavily on some calculations by Leclerc, Thibon and Vasserot \cite{Leclerc-Thibon-Vasserot}. Instead we follow Sevenhant and Van den Bergh \cite{Sevenhant-VandenBergh2}, putting this result in a broader context and avoiding the more involved computations.

The reader might like to consider these notes as a companion to Schiffmann's survey article \cite{Schiffmann4}; the latter is much more advanced and has a much broader scope than these notes, whereas we have tried to fill in some of the gaps. In this spirit we remark that Schiffmann's conjecture is answered by Theorem \ref{centre} (since the map $\Phi_n$ preserves the Hopf pairing), and we finish with Conjecture \ref{Conj} which, if true, would answer the question posed by Schiffmann concerning the (dual) canonical basis elements.

\section{Symmetric Functions}

Symmetric functions play a central role in many areas of mathematics, including the representation theory of the general linear group, combinatorics, analysis and mathematical physics. Here we briefly outline some of the results we shall need in discussing their relationship to Ringel-Hall algebras of cyclic quivers. Our main reference for this section is \cite{Macdonald2}.

\subsection{Partitions}

A partition $\lambda=(\lambda_1,\lambda_2,\ldots,\lambda_l)$ is a finite sequence of positive integers such that $\lambda_i\geq\lambda_{i+1}$ for all $i$. We define $\ell(\lambda):=l$ to be its length, and set
\[ |\lambda|:=\sum_i\lambda_i \quad\textrm{and}\quad m_r(\lambda):=|\{i:\lambda_i=r\}| \quad\textrm{for }r\geq1. \]
It is usual to depict a partition as a Young diagram, where the $i$-th row contains $\lambda_i$ boxes, and the rows are left-justified.

Given $\lambda$, we can reflect its Young diagram in the main diagonal to obtain the Young diagram of another partition, called the dual partition $\lambda'$. We see immediately that $|\lambda'|=|\lambda|$ and that $\lambda'_i = |\{j:\lambda_j\geq i\}|$, so $m_r(\lambda)=\lambda'_r-\lambda'_{r+1}$.

For example, here are the Young diagrams of two conjugate partitions
\begin{center}
\newbox\ASYbox
\newdimen\ASYdimen
\def\ASYbase#1#2{\setbox\ASYbox=\hbox{#1}\ASYdimen=\ht\ASYbox%
\setbox\ASYbox=\hbox{#2}\lower\ASYdimen\box\ASYbox}
\def\ASYalign(#1,#2)(#3,#4)#5#6{\leavevmode%
\setbox\ASYbox=\hbox{#6}%
\setbox\ASYbox\hbox{\ASYdimen=\ht\ASYbox%
\advance\ASYdimen by\dp\ASYbox\kern#3\wd\ASYbox\raise#4\ASYdimen\box\ASYbox}%
\put(#1,#2){%
\wd\ASYbox 0pt\dp\ASYbox 0pt\ht\ASYbox 0pt%
\box\ASYbox%
}}
\def\ASYraw#1{#1}
\setlength{\unitlength}{1pt}
\includegraphics{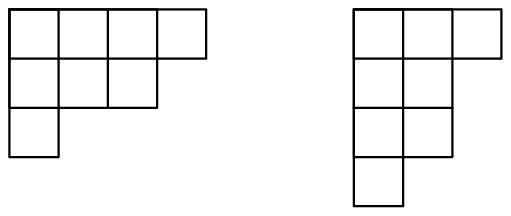}%
\definecolor{ASYcolor}{gray}{0.000000}\color{ASYcolor}
\fontsize{9.962640}{11.955168}\selectfont
\usefont{OT1}{cmr}{m}{n}%
\ASYalign(-114.061737,5.801125)(-0.500000,-0.250000){1.000000 0.000000 0.000000 1.000000}{$(4,3,1)$}
\ASYalign(-21.590462,5.801125)(-0.500000,-0.250000){1.000000 0.000000 0.000000 1.000000}{$(3,2,2,1)$}
\end{center}

The dominance (partial) ordering on partitions of $n$ is given by
\[ \lambda\leq\mu \quad\textrm{if, for all }i,\quad \lambda_1+\cdots+\lambda_i\leq\mu_1+\cdots+\mu_i. \]
It is a nice exercise\footnote{
Suppose $\lambda\leq\mu$. We must have $\ell(\lambda)\geq\ell(\mu)$, so $\lambda'_1\geq\mu'_1$. Now remove the first column of $\lambda$ and place it below the second column, to obtain $\tilde\lambda$ such that $\ell(\tilde\lambda)=\lambda'_1+\lambda'_2$. Do the same to $\mu$, and note that $\tilde\lambda\leq\tilde\mu$.
}
to show that
\[ \lambda\leq\mu \quad\textrm{if and only if}\quad \mu'\leq\lambda'. \]

We also define 
\[ n(\lambda) := \sum_i(i-1)\lambda_i = \sum_i\binom{\lambda_i'}{2}. \]
The equality comes from filling each box in the $i$-th row of the Young diagram for $\lambda$ with the number $i-1$. We then sum these numbers either along rows or along columns. Since $n(\lambda)=\sum_{i<j}\min\{\lambda_i,\lambda_j\}$, we also get
\[ \sum_{i,j}\min\{\lambda_i,\lambda_j\} = \sum_{r,s}m_r(\lambda)m_s(\lambda)\min\{r,s\} = 2n(\lambda)+|\lambda|. \]

Finally, set
\[ z_\lambda := \prod_r\big(m_r(\lambda)!r^{m_r(\lambda)}\big), \]
and note that, if $|\lambda|=n$, then $z_\lambda$ is the size of the centraliser in the symmetric group $\mathfrak S_n$ of any element of cycle type $\lambda$. We have the identities
\[ \sum_{|\lambda|=n}\frac{1}{z_\lambda} = 1 \quad\textrm{and}\quad \sum_{|\lambda|=n}(-1)^{\ell(\lambda)}\frac{1}{z_\lambda} = -\delta_{1n}. \]
The first follows from the Orbit-Stabiliser Theorem, whereas the second follows from the fact that, for $n\geq2$, the alternating group has index 2 in the full symmetric group.

\subsection{The ring of symmetric functions}

Let
\[ \Lambda := \mathbb Q[p_1,p_2,\ldots] \]
be a polynomial ring in countably many variables. This has a $\mathbb Q$-basis indexed by the set of partitions
\[ p_\lambda := \prod_ip_{\lambda_i} = \prod_r p_r^{m_r(\lambda)}, \]
and is naturally $\mathbb N$-graded, where $\deg(p_\lambda):=|\lambda|$.

We make $\Lambda$ into a graded Hopf algebra via
\[ \Delta(p_r):=p_r\otimes1+1\otimes p_r, \quad \varepsilon(p_r):=0 \quad\textrm{and}\quad S(p_r):=-p_r. \]
Thus the generators $p_r$ are primitive elements. Clearly $\Lambda$ is both commutative and cocommutative.

We next define a non-degenerate symmetric bilinear form on $\Lambda$ via
\[ \langle p_\lambda,p_\mu\rangle := \delta_{\lambda\mu}z_\lambda. \]
We observe that this form respects the grading on $\Lambda$. Moreover,
\[ \langle f,gh\rangle = \langle\Delta(f),g\otimes h\rangle, \quad \langle S(f),g\rangle = \langle f,S(g)\rangle, \quad \langle f,1\rangle = \varepsilon(f), \]
where $\langle a\otimes b,c\otimes d\rangle:=\langle a,c\rangle\langle b,d\rangle$. Thus $\langle-,-\rangle$ is a non-degenerate graded Hopf pairing on $\Lambda$.

We summarise this by saying that $\Lambda$ is a self-dual graded Hopf algebra. Note that, since each graded part of $\Lambda$ is finite dimensional, $\Lambda$ is isomorphic (as a graded Hopf algebra) to its graded dual. 

We call $\Lambda$ (equipped with all this extra structure) the ring of symmetric functions.

\subsection{A remark on the Hopf algebra structure}

Let $K$ be a field and $V$ a $K$-vector space, say with basis $\{x_r\}$. Let $S=S(V)$ be the symmetric algebra of $V$, so that $S=K[\{x_r\}]$ is the polynomial ring on the variables $x_r$. Then $S$ is naturally a Hopf algebra via
\[ \Delta(x_r)=x_r\otimes1+1\otimes x_r, \quad S(x_r)=-x_r \quad\textrm{and}\quad \varepsilon(x_r)=0. \]
We can see this either by noting that $V$ is an algebraic group with respect to addition, so its ring of regular functions $S$ is a Hopf algebra, or else that $V$ is an abelian Lie algebra, so its universal enveloping algebra $S$ is a Hopf algebra.

Now, if $\langle-,-\rangle$ is any symmetric bilinear form on $V$ for which the $x_r$ are pairwise orthogonal, say $\langle x_r,x_s\rangle=\delta_{rs}a_r$, then there is a unique extension of this form to a Hopf pairing on $S$, satisfying
\[ \langle x_\lambda,x_\mu \rangle = \delta_{\lambda\mu}\prod_r\big(m_r(\lambda)!a_r^{m_r(\lambda)}\big), \quad\textrm{where } x_\lambda:=\prod_ix_{\lambda_i}. \]
In particular, $\Lambda$ is the symmetric algebra of the $\mathbb Q$-vector space with basis $\{p_r\}$, and we have used the symmetric bilinear form $\langle p_r,p_s\rangle = \delta_{rs}r$.

\subsection{Symmetric Functions}

We shall now describe the relationship between $\Lambda$ and the rings of symmetric polynomials.

Set $R_n:=\mathbb Q[X_1,\ldots,X_n]$. The symmetric group $\mathfrak S_n$ acts on $R_n$ by permuting the $X_i$, and we call the fixed-point ring $S_n:=R_n^{\mathfrak S_n}$ the ring of symmetric polynomials. Clearly both $R_n$ and $S_n$ are $\mathbb N$-graded, where $\deg(X_i):=1$.

The power sum polynomials $p_{r,n}:=X_1^r+\cdots+X_n^r$ are obviously symmetric, and it is a classical result that the $p_{r,n}$ for $1\leq r\leq n$ are algebraically independent and generate $S_n$, so $S_n$ is again a polynomial ring on $n$ generators.

We can make the $R_n$ (and by restriction the $S_n$) into an inverse system using the maps $\rho_n\colon R_n\twoheadrightarrow R_{n-1}$, $X_n\mapsto 0$. Then the graded epimorphisms $\pi_n\colon\Lambda\twoheadrightarrow S_n$ sending $p_r\mapsto p_{r,n}$ are compatible with the $\rho_n$.

\begin{Thm}\label{symm-fns}
We have $\Lambda\cong\varprojlim S_n$ in the category of graded rings.
\end{Thm}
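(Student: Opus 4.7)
The plan is to pass to the graded pieces and reduce the statement to finite-dimensional linear algebra. The crucial point is that the limit in the category of graded rings is computed degree-by-degree,
\[
(\varprojlim S_n)_d \;=\; \varprojlim_n (S_n)_d,
\]
so it suffices to check that each map $\pi_n\colon\Lambda\to S_n$ restricts in degree $d$ to an isomorphism for all $n$ sufficiently large, and that the transition maps $\rho_n\colon(S_n)_d\to(S_{n-1})_d$ are eventually isomorphisms.

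First I would pick a convenient basis of $S_n$. Either the monomial symmetric polynomials $m_\lambda(X_1,\ldots,X_n)$ (indexed by partitions $\lambda$ with $\ell(\lambda)\leq n$) or the power-sum monomials $p_{\mu,n}=\prod_i p_{\mu_i,n}$ (indexed by partitions with $\mu_1\leq n$) make it transparent that $\dim_{\mathbb Q}(S_n)_d$ equals the number of partitions of $d$ whose parts are all at most $n$. For $n\geq d$ this stabilises at $p(d)=\dim_{\mathbb Q}\Lambda_d$. Working in the monomial basis one sees that $\rho_n$ sends $m_\lambda$ to $m_\lambda$ when $\ell(\lambda)<n$ and to $0$ when $\ell(\lambda)=n$, so $(\rho_n)_d$ is an isomorphism once $n>d$; the inverse system in degree $d$ is thus eventually constant.

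Next I would assemble the universal map. The compatibility $\rho_n\pi_n=\pi_{n-1}$ need only be checked on the algebra generators $p_r$, where both sides send $p_r$ to $p_{r,n-1}$. This produces a graded ring homomorphism $\Phi\colon\Lambda\to\varprojlim S_n$. In degree $d$, pick any $n\geq d+1$; then the composition $\Lambda_d\xrightarrow{\Phi_d}(\varprojlim S_n)_d\to(S_n)_d$ equals $(\pi_n)_d$, which is surjective because the $p_{r,n}$ generate $S_n$, and is therefore an isomorphism since both sides have the same finite dimension $p(d)$. Because the projection is also an isomorphism by the previous step, $\Phi_d$ is an isomorphism. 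Summing over $d$ yields the theorem.

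The only mild obstacle is the bookkeeping distinguishing the graded inverse limit from the ungraded one: the latter includes compatible sequences of unbounded degree (for example $\bigl(\prod_{i=1}^n(1+X_i)\bigr)_n$) and is strictly larger than $\Lambda$. Once one insists on the degree-wise limit, the theorem reduces, as above, to the eventual constancy of the diagrams $(S_n)_d$ together with the classical fact that the $p_{r,n}$ generate $S_n$.
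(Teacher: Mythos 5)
Your proof is correct and complete, and it matches the argument the paper implicitly has in mind: the paper gives no proof of Theorem~\ref{symm-fns}, but the remark immediately following it (``the $d$-th graded part of $\Lambda$ is the inverse limit of the $d$-th graded parts of the $S_n$,'' and the warning that the limit must be taken in the category of \emph{graded} rings) is exactly the degree-by-degree reduction you carry out, together with the graded-versus-ungraded caveat you flag at the end. The two ingredients --- eventual constancy of the inverse system in each fixed degree, witnessed by the formula $\rho_n(m_\lambda)=m_\lambda$ for $\ell(\lambda)<n$ and $\rho_n(m_\lambda)=0$ for $\ell(\lambda)=n$, and the dimension count $\dim(S_n)_d=p(d)$ for $n\geq d$ forcing $(\pi_n)_d$ to be an isomorphism --- are precisely what one needs, and your example $\bigl(\prod_{i\leq n}(1+X_i)\bigr)_n$ is the standard illustration of why the ungraded inverse limit is strictly larger. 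One tiny cosmetic point: the monomial basis of $(S_n)_d$ is indexed by partitions of $d$ with \emph{at most $n$ parts}, whereas the power-sum basis is indexed by partitions of $d$ with \emph{all parts at most $n$}; these counts agree by conjugation, so your phrasing is not wrong, but it is worth keeping the two indexings distinct.
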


In particular, the $d$-th graded part of $\Lambda$ is the inverse limit of the $d$-th graded parts of the $S_n$. Note that this theorem is only valid if we take the inverse limit in the category of \emph{graded} rings.

This offers an alternative approach to the naturality of the Hopf algebra structure. For, we have isomorphisms
\[ R_n\otimes R_n\xrightarrow{\sim}R_{2n}, \quad X_i\otimes1\mapsto X_i, \quad 1\otimes X_i\mapsto X_{n+i}, \]
and the comultiplication on $\Lambda$ is such that the following diagram commutes
\[ \begin{CD}
\Lambda @>{\Delta}>> \Lambda\otimes\Lambda\\
@VV{\pi_{2n}}V @VV{\pi_n\otimes\pi_n}V\\
R_{2n} @<{\sim}<< R_n\otimes R_n
\end{CD} \]

It is often convenient to express the elements of $\Lambda=\varprojlim S_n$ in terms of the infinite polynomial ring $\varprojlim R_n=\mathbb Q[X_1,X_2,\ldots]$, where the inverse limit is again taken in the category of graded rings. For example, we have $p_r=\sum_iX_i^r$.

\subsection{Special Functions}

There are, of course, many different bases for $\Lambda$. We list below some of the more important ones. We shall often describe elements implicitly by giving their generating function. It is also easy to express the comultiplication in this way, where we extend $\Delta$ to a map $\Lambda\otimes\bQ[T]\to\Lambda\otimes\Lambda\otimes\bQ[T]$ via $\Delta(fT^n)\mapsto\Delta(f)T^n$.

We shall frequently use the following lemma.

\begin{Lem}\label{gen-fns}
Consider homogeneous elements $x_n$ and $y_n$ of degree $n$ such that
\[ \sum_{n\geq0}x_nT^n = \exp\Big(\sum_{n\geq1}\frac{1}{n}y_nT^n\Big). \]
Then
\[ \Delta(x_n) = \sum_{a+b=n}x_a\otimes x_b \quad\textrm{if and only if}\quad \Delta(y_n)=y_n\otimes1+1\otimes y_n. \]
In this case, setting $\xi_n:=\langle x_n,x_n\rangle$ and $\eta_n:=\frac{1}{n}\langle y_n,y_n\rangle$ we similarly have
\[ \sum_{n\geq0}\xi_nT^n = \exp\Big(\sum_{n\geq1}\frac{1}{n}\eta_nT^n\Big). \]
\end{Lem}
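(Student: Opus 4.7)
The plan for the biimplication is to work with the generating series in $(\Lambda\otimes\Lambda)[[T]]$. Writing $X(T)=\sum_{n\geq 0}x_nT^n$ and $Y(T)=\sum_{n\geq 1}\frac{1}{n}y_nT^n$, the condition $\Delta(x_n)=\sum_{a+b=n}x_a\otimes x_b$ is just the coefficient of $T^n$ in
\[ \Delta(X(T))=(X(T)\otimes 1)\cdot(1\otimes X(T)), \]
while primitivity of each $y_n$ is equivalent to $\Delta(Y(T))=Y(T)\otimes 1+1\otimes Y(T)$. Since $\Delta$ is an algebra homomorphism it acts coefficient-wise on $T$ and commutes with $\exp$ and $\log$; the two embeddings $a\mapsto a\otimes 1$ and $a\mapsto 1\otimes a$ do the same. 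From $X=\exp(Y)$ we therefore obtain $\Delta(X)=\exp(\Delta(Y))$, and because $Y\otimes 1$ and $1\otimes Y$ commute,
\[ \exp(Y\otimes 1+1\otimes Y)=\exp(Y\otimes 1)\exp(1\otimes Y)=(X\otimes 1)(1\otimes X). \]
This handles one direction; the converse comes by taking $\log$, which is legal since $X$ has constant term $1$.

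For the pairing identity, the strategy is to produce the recurrence $n\xi_n=\sum_{k=1}^n\eta_k\xi_{n-k}$, which together with $\xi_0=\langle 1,1\rangle=1$ is equivalent, by differentiating, to the claimed generating-function identity. Differentiating $X=\exp(Y)$ gives $X'=Y'X$, i.e.\ the Newton-type identity $nx_n=\sum_{k=1}^n y_kx_{n-k}$. Pairing both sides with $x_n$, using the symmetric form of the Hopf-pairing axiom $\langle fg,h\rangle=\langle f\otimes g,\Delta(h)\rangle$ and the known coproduct of $x_n$, yields
\[ n\xi_n=\sum_{k=1}^n\sum_{a+b=n}\langle y_k,x_a\rangle\langle x_{n-k},x_b\rangle. \]
The grading of the pairing forces $a=k$ and $b=n-k$, so $n\xi_n=\sum_{k=1}^n\langle y_k,x_k\rangle\,\xi_{n-k}$.

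It remains to identify $\langle y_k,x_k\rangle$ with $\eta_k$, which is the one non-formal step. I would rearrange the Newton identity as $y_k=kx_k-\sum_{j=1}^{k-1}y_jx_{k-j}$ and pair with $y_k$, reducing the claim to showing $\langle y_k,y_jx_{k-j}\rangle=0$ for $1\leq j\leq k-1$. Primitivity of $y_k$ gives
\[ \langle y_k,y_jx_{k-j}\rangle=\varepsilon(x_{k-j})\langle y_k,y_j\rangle+\varepsilon(y_j)\langle y_k,x_{k-j}\rangle, \]
and both counit values vanish, since $\varepsilon(f)=\langle f,1\rangle$ and the pairing respects grading, so $\langle f,1\rangle=0$ whenever $f$ is homogeneous of positive degree. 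Hence $\langle y_k,x_k\rangle=\tfrac{1}{k}\langle y_k,y_k\rangle=\eta_k$, and the recurrence $n\xi_n=\sum_{k=1}^n\eta_k\xi_{n-k}$ drops out. This last step—coupling primitivity of $y_k$ with the Newton expansion via the Hopf pairing—is the main technical obstacle; everything else reduces to formal manipulation of generating series.
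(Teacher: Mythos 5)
Your proposal is correct and follows essentially the same path as the paper: both proofs ultimately rest on the Newton-type recurrence $nx_n=\sum_{k=1}^n y_k x_{n-k}$, the derivation of $n\xi_n=\sum_{k=1}^n\eta_k\xi_{n-k}$ by pairing that recurrence against $y_n$ and against $x_n$, and the key observation that a primitive $y_k$ pairs to zero against any product of two elements of positive degree. The only real stylistic divergence is in the biimplication: where the paper runs an induction on degree using $\Delta(nx_n-y_n)=\sum_{a=1}^{n-1}\Delta(y_{n-a}x_a)$, you argue directly at the level of generating series, using that $\Delta$ (being a graded algebra map) commutes with $\exp$ and $\log$ and that $Y\otimes 1$ and $1\otimes Y$ commute in the commutative ring $\Lambda\otimes\Lambda$. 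This is a slicker packaging of the same induction; it buys brevity and transparency, at the cost of leaning on commutativity of $\Lambda$, which the paper's more pedestrian induction makes explicit but does not strictly need in a weaker form. Your treatment of the remaining step, showing $\langle y_k,x_k\rangle=\eta_k$ by rearranging the Newton identity and invoking primitivity together with $\langle f,1\rangle=\varepsilon(f)$, is exactly the paper's argument up to presentation. No gaps.
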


\begin{proof}
Set
\[ X(T):=\sum_{n\geq0}x_nT^n, \quad\textrm{and}\quad Y(T):=\sum_{n\geq1}y_nT^{n-1}, \]
and note that $x_0=1$. We can express the relationship between the $x_n$ and $y_n$ in terms of their generating functions as
\[ \dT\log X(T) = Y(T), \quad\textrm{so}\quad X(T)Y(T)=\dT X(T). \]
From these we get
\[ nx_n=\sum_{a=1}^ny_ax_{n-a}, \quad x_n=\sum_{|\lambda|=n}\frac{1}{z_\lambda}y_\lambda, \quad y_n=-n\sum_{|\lambda|=n}\frac{(-1)^{\ell(\lambda)}(\ell(\lambda)-1)!}{\prod_rm_r(\lambda)!}x_\lambda, \]
where as usual $x_\lambda:=\prod_ix_{\lambda_i}$, and analogously for $y_\lambda$.

Using the first equality, we see by induction that
\begin{align*}
\Delta(nx_n-y_n) &= \sum_{a=1}^{n-1}\Delta(y_{n-a}x_a)\\
&= \sum_{a=1}^{n-1}(y_{n-a}\otimes1+1\otimes y_{n-a})(x_a\otimes1+x_{a-1}\otimes x_1+\cdots+1\otimes x_a)\\
&= n(x_n\otimes1+x_{n-1}\otimes x_1+\cdots+1\otimes x_n) - (y_n\otimes1+1\otimes y_n).
\end{align*}
Hence
\[ \Delta(x_n) = \sum_{a+b=n}x_a\otimes x_b \quad\textrm{if and only if}\quad \Delta(y_n)=y_n\otimes1+1\otimes y_n. \]

Now, assuming this, apply $\langle y_n,-\rangle$. Since $y_n$ is primitive, $\langle y_n,fg\rangle=0$ if $f$ and $g$ are both homogeneous of degree at least 1. Therefore
\[ n\langle y_n,x_n\rangle = \langle y_n,y_n\rangle. \]
Now apply $\langle x_n,-\rangle$ to get
\[ n\langle x_n,x_n\rangle = \sum_{a=1}^n\langle\Delta(x_n),y_a\otimes x_{n-a}\rangle = \sum_{a=1}^n\langle x_a,y_a\rangle\langle x_{n-a},x_{n-a}\rangle. \]
Putting these together we get
\[ n\xi_n = \sum_{a=1}^n \eta_a\xi_{n-a}. \]
The result about their generating functions now follows, noting that $\xi_0=1$.
\end{proof}

\subsubsection{Power Sum Functions}

The functions $p_n$ are called the power sum functions. They are characterised up to scalars by being primitive elements:
\[ \Delta(p_n) = p_n\otimes1+1\otimes p_n. \]
The $p_n$ have the generating function
\[ P(T) := \sum_{n\geq1}p_nT^{n-1} = \sum_i\frac{X_i}{1-X_iT}. \]
Since the power sum functions are primitive, we can write
\[ \Delta(P(T)) = P(T)\otimes1+1\otimes P(T). \]
Finally, we recall that
\[ \langle p_\lambda,p_\mu\rangle = \delta_{\lambda\mu}z_\lambda. \]

\subsubsection{Elementary Symmetric Functions}

These are defined via
\[ E(T) = \sum_{n\geq0}e_nT^n := \prod_i(1+X_iT), \quad\textrm{so}\quad e_n = \sum_{i_1<\cdots<i_n}X_{i_1}\cdots X_{i_n}. \]
We observe that
\[ \dT\log E(T) = P(-T), \quad\textrm{so}\quad ne_n = -\sum_{a=1}^n(-1)^ae_{n-a}p_a. \]
Alternatively, we can write
\[ E(T) = \exp\Big(-\sum_{n\geq1}\frac{(-1)^n}{n}p_nT^n\Big), \quad\textrm{so}\quad e_n = (-1)^n\sum_{|\lambda|=n}(-1)^{\ell(\lambda)}\frac{1}{z_\lambda}p_\lambda. \]
It follows from Lemma \ref{gen-fns} that
\[ \Delta(E(T)) = E(T)\otimes E(T), \quad\textrm{or}\quad \Delta(e_n) = \sum_{a+b=n}e_a\otimes e_b \]
and that
\[ \langle e_m,e_n\rangle = \delta_{mn}. \]

\subsubsection{Complete Symmetric Functions}

These are defined via
\[ H(T) = \sum_{n\geq0}h_nT^n := \prod_i(1-X_iT)^{-1}, \quad\textrm{so}\quad h_n = \sum_{i_1\leq\cdots\leq i_n}X_{i_1}\cdots X_{i_n}. \]
We observe that $H(T)E(-T)=1$, so
\[ \sum_{a+b=n}(-1)^ae_ah_b=0 \quad\textrm{for }n\geq1, \]
and giving the analogous statements
\begin{alignat*}{2}
\dT\log H(T) &= P(T), &\quad nh_n &= \sum_{a=1}^nh_{n-a}p_a\\
H(T) &= \exp\Big(\sum_{n\geq1}\frac{1}{n}p_nT^n\Big), &\quad h_n &= \sum_{|\lambda|=n}\frac{1}{z_\lambda}p_\lambda.\\
\Delta(H(T)) &= H(T)\otimes H(T), &\quad \Delta(h_n) &= \sum_{a+b=n}h_a\otimes h_b
\end{alignat*}
and
\[ \langle h_m,h_n\rangle =\delta_{mn}. \]

\subsubsection{Monomial Functions}

To describe the basis of monomial functions, we need a little more notation. Given a finite sequence $\alpha=(\alpha_1,\alpha_2,\ldots)$ of non-negative integers, we can copy the definitions for partitions and set $|\alpha|:=\sum_i\alpha_i$ and $m_r(\alpha):=|\{i:\alpha_i=r\}|$ for $r\geq1$. We write $\alpha\sim\beta$ if $m_r(\alpha)=m_r(\beta)$ for all $r\geq1$. Clearly, given $\alpha$, there is a unique partition $\lambda$ such that $\alpha\sim\lambda$.

Given such a sequence $\alpha$, set $X^\alpha:=\prod_iX_i^{\alpha_i}$, a monomial of degree $|\alpha|$. Then for each partition $\lambda$ we define the monomial function $m_\lambda\in\Lambda$ to be
\[ m_\lambda := \sum_{\alpha\sim\lambda}X^\alpha. \]
The $m_\lambda$ form a basis of $\Lambda$, and
\[ p_n=m_{(n)}, \quad e_n=m_{(1^n)} \quad\textrm{and}\quad h_n=\sum_{|\lambda|=n}m_\lambda. \]

If we set $e_\lambda:=\prod_ie_{\lambda_i}$, then the $e_\lambda$ also form a basis for $\Lambda$. With respect to this basis we can write
\[ m_\lambda = e_{\lambda'} + \sum_{\mu<\lambda}\alpha_{\lambda\mu}e_{\mu'} \quad\textrm{for some integers }a_{\lambda\mu}, \]
where $\lambda'$ again denotes the conjugate partition to $\lambda$.

Similarly we can set $h_\lambda:=\prod_ih_{\lambda_i}$, in which case the $h_\lambda$ form a basis for $\Lambda$ dual to the $m_\lambda$
\[ \langle h_\lambda,m_\mu\rangle = \delta_{\lambda\mu}. \]

\subsubsection{Schur Functions}

The Schur functions play a fundamental role in the representation theories of the symmetric groups and the general linear groups; for example, they correspond to the irreducible characters of $\mathfrak S_n$, and also to the irreducible polynomial representations of $GL_n$.

The Schur functions $s_\lambda$ are characterised by the two properties\footnote{
This is a non-standard description. Usually one replaces property (a) by the equivalent property
\[ s_\lambda=m_\lambda+\sum_{\mu<\lambda}K_{\lambda\mu}m_\mu. \]
The coefficients $K_{\lambda\mu}$ which occur are called the Kostka numbers. See for example \cite{Macdonald3}.
}
\begin{enumerate}
\item[(a)] $s_\lambda = e_{\lambda'}+\sum_{\mu<\lambda}\beta_{\lambda\mu}e_{\mu'}$ for some integers $\beta_{\lambda\mu}$
\item[(b)] $\langle s_\lambda,s_\mu\rangle = \delta_{\lambda\mu}$
\end{enumerate}
and hence the $s_\lambda$ form a basis of $\Lambda$.

More explicitly, we have
\[ s_\lambda = \det\big(e_{\lambda'_i-i+j}\big) = \det\big(h_{\lambda_i-i+j}\big), \]
where the first matrix has size $\lambda_1=\ell(\lambda')$ and the second has size $\ell(\lambda)$. In particular, we always have
\[ s_{(1^n)} = e_n \quad\textrm{and}\quad s_{(n)} = h_n. \]

We compute the first few Schur functions for reference.
\begin{gather*}
s_{(1)}=e_1\\
s_{(1^2)}=e_2, \quad s_{(2)}=e_1^2-e_2=h_2\\
s_{(1^3)}=e_3, \quad s_{(12)}=e_1e_2-e_3, \quad s_{(3)}=e_1^3-2e_1e_2+e_3=h_3\\
s_{(1^4)}=e_4, \quad s_{(1^22)}=e_1e_3-e_4, \quad s_{(2^2)}=e_2^2-e_1e_3,\\
s_{(13)}=e_1^2e_2-e_2^2-e_1e_3+e_4, \quad s_{(4)}=e_1^4-3e_1^2e_2+e_2^2+2e_1e_3-e_4=h_4.
\end{gather*}

If we write
\[ s_\lambda s_\mu = \sum_\xi c_{\lambda\mu}^\xi s_\xi, \]
then the coefficients $c_{\lambda\mu}^\xi$ are called the Littlewood-Richardson coefficients.

\subsection{An Important Generalisation}

We shall see in the next chapter that, when studying the representation theory of finite abelian $p$-groups, or nilpotent modules for the polynomial ring $\mathbb F_q[X]$, the extension $\Lambda[t]:=\Lambda\otimes\mathbb Q[t]$ of $\Lambda$ arises naturally. 
In this setting it is useful to redefine the symmetric bilinear form to be
\[ \langle p_\lambda,p_\mu\rangle_t := \delta_{\lambda\mu}z_\lambda(t), \]
where
\[ z_\lambda(t) := \prod_r\bigg(m_r(\lambda)!\Big(\frac{r}{1-t^r}\Big)^{m_r(\lambda)}\bigg) = z_\lambda\prod_r(1-t^r)^{-m_r(\lambda)} \in \mathbb Q(t). \]
This is a Hopf pairing on $\Lambda[t]$, for the same reasons as before, and it is clear that specialising to $t=0$ recovers the original form.

We set
\[ \phi_n(t)=(1-t)(1-t^2)\cdots(1-t^n) \quad\textrm{and}\quad b_\lambda(t):=\prod_r\phi_{m_r(\lambda)}(t). \]

Then\footnote{
By Lemma \ref{gen-fns} we have
\[ \sum_{n\geq0}\langle e_n,e_n\rangle_tT^n = \mathrm{exp}\Big(\sum_{n\geq1}\frac{1}{n(1-t^n)}T^n\Big). \]
We can rewrite this as $\prod_{n\geq0}(1-t^nT)^{-1}$, and expanding the product we obtain
\[ \prod_{n\geq0}(1-t^nT)^{-1} = (1-T)^{-1}\sum_\lambda t^{|\lambda|}T^{\ell(\lambda)} = (1-T)^{-1}\sum_\lambda t^{|\lambda|}T^{\lambda_1} = \sum_{n\geq0}\phi_n(t)^{-1}T^n. \]
}
\[ \langle e_m,e_n\rangle_t = \langle h_m,h_n\rangle_t = \delta_{mn}\phi_n(t)^{-1}. \]

\subsection{Dual Schur Functions}

With respect to this new bilinear form, the Schur functions no longer give an orthonormal basis. We therefore introduce the dual Schur functions $S_\lambda(t)$ such that $\langle S_\lambda(t),s_\mu\rangle_t=\delta_{\lambda\mu}$. The $S_\lambda(t)$ can be given explicitly via
\[ S_\lambda(t) = \det\big(c_{\lambda_i-i+j}(t)\big), \quad\textrm{where as usual}\quad c_\lambda(t)=\prod_ic_{\lambda_i}(t). \]
In particular, we have
\[ S_{(n)}(t) = c_n(t). \]

\subsubsection{Cyclic Symmetric Functions}

We generalise the complete symmetric functions by\footnote{
In Macdonald's book, they are denoted $q_n(\underline X;t)$, but $q$ seems an unfortunate choice since this is used elsewhere as another variable.}
\[ C(T) := 1+\sum_{n\geq1}c_n(t)T^n = \exp\Big(\sum_{n\geq1}\frac{1-t^n}{n}p_nT^n\Big), \]
so, setting $c_0(t):=1$ for convenience,
\[ c_n(t) = \sum_{|\lambda|=n}\frac{1}{z_\lambda(t)}p_\lambda \quad\textrm{and}\quad nc_n(t) = \sum_{a=1}^n(1-t^a)p_ac_{n-a}(t). \]
We can also express the generating function $C(T)$ in terms of $E(T)$ and $H(T)$:
\[ C(T) = H(T)/H(tT) = E(-tT)/E(-T). \]
This gives
\[ (1-t^n)h_n = \sum_{a=1}^nt^{n-a}h_{n-a}c_a(t) \quad\textrm{and}\quad (t^n-1)e_n = \sum_{a=1}^n(-1)^ae_{n-a}c_a(t). \]
We note that specialising to $t=0$ recovers the complete symmetric functions, $c_n(0)=h_n$.

The functions $c_n(t)$ do not seem to have a name, so we shall refer to them as `cyclic' functions based on their role in the Hall algebra.

We have
\[ \Delta(C(T)) = C(T)\otimes C(T), \quad \Delta(c_n(t)) = \sum_{a+b=n}c_a(t)\otimes c_b(t) \]
and, using Lemma \ref{gen-fns},
\[ \sum_{n\geq0}\langle c_n(t),c_n(t)\rangle_tT^n = \exp\Big(\sum_{n\geq1}\frac{1-t^n}{n}T^n\Big) = \frac{1-tT}{1-T} = 1+(1-t)\sum_{n\geq1}T^n. \]
Thus
\[ \langle c_n(t),c_n(t)\rangle_t = (1-t) \quad\textrm{for }n\geq1. \]
Setting $c_\lambda(t):=\prod_ic_{\lambda_i}(t)$, then the $c_\lambda(t)$ form a basis for $\Lambda[t]$ dual to the basis of monomial functions
\[ \langle c_\lambda(t),m_\mu\rangle_t = \delta_{\lambda\mu}. \]

\subsubsection{Hall-Littlewood Functions}

One can also generalise the Schur functions $s_\lambda$ to obtain the Hall-Littlewood symmetric functions $P_\lambda(t)$. These are characterised by\footnote{
We have again replaced the standard description in terms of the monomial functions by an equivalent one involving the elementary symmetric functions.
}
\begin{enumerate}
\item[(a)] $P_\lambda(t) = e_{\lambda'} + \sum_{\mu<\lambda}\beta_{\lambda\mu}(t)e_{\mu'}$ for some integer polynomials $\beta_{\lambda\mu}(t)$
\item[(b)] $\langle P_\lambda(t),P_\mu(t)\rangle_t = \delta_{\lambda\mu}b_\lambda(t)^{-1}$
\end{enumerate}
so the $P_\lambda(t)$ form a basis for $\Lambda[t]$.

Clearly $P_\lambda(0)=s_\lambda$, but we also have that $P_\lambda(1)=m_\lambda$, so the Hall-Littlewood functions can be thought of as providing a transition between the Schur functions and the monomial functions. In fact, we have 
\[ s_\lambda = P_\lambda(t) + \sum_{\mu<\lambda}K_{\lambda\mu}(t)P_\mu(t), \]
and the coefficients $K_{\lambda\mu}(t)$ are integer polynomials, called the Kostka-Foulkes polynomials. Note that, since $P_\lambda(0)=s_\lambda$, we must have $K_{\lambda\mu}(t)\in t\mathbb Z[t]$.

Finally, we state the relations
\[ e_r = P_{(1^r)}(t), \quad c_r(t) = (1-t)P_{(r)}(t), \quad h_r = \sum_{|\lambda|=r}t^{n(\lambda)}P_\lambda(t) \]
and
\[ p_r = \sum_{|\lambda|=r}(1-t^{-1})(1-t^{-2})\cdots(1-t^{1-\ell(\lambda)})t^{n(\lambda)}P_\lambda(t). \]
We will prove these using the classical Hall algebra.

\subsection{Integral Bases}

We observe from the formulae
\[ \sum_{a+b=n}(-1)^ae_ah_b = 0, \quad m_\lambda = e_{\lambda'}+\sum_{\mu<\lambda}\alpha_{\lambda\mu}e_{\mu'} \quad\textrm{and}\quad s_\lambda = e_{\lambda'}+\sum_{\mu<\lambda}\beta_{\lambda\mu}e_{\mu'}, \]
where $\alpha_{\lambda\mu},\beta_{\lambda\mu}\in\mathbb Z$, that the following subrings are all equal
\[ \mathbb Z[e_1,e_2,\ldots] = \mathbb Z[h_1,h_2,\ldots] = \mathbb Z[\{m_\lambda\}] = \mathbb Z[\{s_\lambda\}]. \]
If we denote this subring by ${}_{\mathbb Z}\Lambda$, then we can strengthen Theorem \ref{symm-fns} to give

\begin{Thm}
\[ {}_{\mathbb Z}\Lambda \cong \varprojlim \mathbb Z[X_1,\ldots,X_n]^{\mathfrak S_n}. \]
\end{Thm}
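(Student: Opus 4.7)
The plan is to reduce to Theorem \ref{symm-fns} via a degree-by-degree argument, with the main input being the \emph{integral} fundamental theorem of symmetric polynomials, namely that $\mathbb{Z}[X_1,\ldots,X_n]^{\mathfrak S_n}$ equals the polynomial ring $\mathbb{Z}[e_{1,n},\ldots,e_{n,n}]$, with the $e_{i,n}$ algebraically independent over $\mathbb{Z}$. This is the step I would treat as the main obstacle: the $\mathbb{Q}$-version follows from Theorem \ref{symm-fns} (since the $p_{i,n}$ and $e_{i,n}$ are both generating sets), but to pass to $\mathbb{Z}$ I would run the standard ``leading monomial'' elimination procedure, ordering monomials lexicographically by exponent vector, and observe that at each stage the leading monomial of a symmetric polynomial is of the form $X_1^{a_1}\cdots X_n^{a_n}$ with $a_1 \ge \cdots \ge a_n$, and can be killed by subtracting an integer multiple of $e_{1,n}^{a_1-a_2}e_{2,n}^{a_2-a_3}\cdots e_{n,n}^{a_n}$. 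This preserves integrality and terminates, giving the integral statement.

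Once that is in hand, I would set up the map $\Phi:{}_{\mathbb Z}\Lambda \to \varprojlim \mathbb{Z}[X_1,\ldots,X_n]^{\mathfrak S_n}$ induced by the compatible projections $\pi_n\colon e_r\mapsto e_{r,n}$ (with $e_{r,n}:=0$ for $r>n$), where compatibility follows immediately from $\rho_n(e_{r,n})=e_{r,n-1}$ for $r\le n-1$ and $\rho_n(e_{n,n})=0$. Since $\Phi$ is the restriction of the isomorphism of Theorem \ref{symm-fns}, injectivity is free.

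For surjectivity I would argue in each graded degree $d$ separately. For $n\ge d$, the monomials $e_{\lambda,n}:=\prod_i e_{\lambda_i,n}$ with $|\lambda|=d$ form a $\mathbb{Z}$-basis of the degree-$d$ part of $\mathbb{Z}[e_{1,n},\ldots,e_{n,n}]$: every factor in such a monomial has index $\lambda_i\le d\le n$, and by the integral fundamental theorem these are linearly independent over $\mathbb{Z}$ and span. Under $\rho_n$ this basis maps bijectively onto the analogous basis for $n-1$ whenever $n-1\ge d$, so the inverse system stabilises in degree $d$ once $n\ge d$; explicitly, its $d$-th graded piece is identified with the degree-$d$ part of $\mathbb{Z}[e_{1,d},\ldots,e_{d,d}]$.

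Finally, the source $({}_{\mathbb Z}\Lambda)_d$ has the $\mathbb{Z}$-basis $\{e_\lambda : |\lambda|=d\}$ (the $e_\lambda$ for $|\lambda|=d$ are $\mathbb{Z}$-linearly independent since they are $\mathbb{Q}$-linearly independent in $\Lambda$, and they span because any element of ${}_{\mathbb{Z}}\Lambda$ is by definition an integer polynomial in the $e_r$), and $\Phi$ sends $e_\lambda\mapsto e_{\lambda,d}$ (for $n=d$, extended compatibly to larger $n$). The argument in the previous paragraph shows this is a bijection onto a basis, so $\Phi$ is a graded iso in each degree. Assembling the graded pieces then yields the theorem. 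The whole argument is essentially the same as for Theorem \ref{symm-fns}, with $p_r$ replaced throughout by $e_r$ — this replacement is exactly what buys us the integral statement.
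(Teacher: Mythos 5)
The paper states this theorem without giving a proof (it is a classical result taken from Macdonald's book, the paper's stated reference for the whole section), so there is no argument to compare against directly. Your proof is correct and fills in exactly the standard details: the integral fundamental theorem of symmetric polynomials via lexicographic leading-monomial elimination, the observation that $\pi_n(e_r)=e_{r,n}$ and $\rho_n$ respects this, and the degree-by-degree stabilisation argument for $n\geq d$ showing that the graded inverse limit of $\mathbb Z[X_1,\ldots,X_n]^{\mathfrak S_n}$ in degree $d$ has the $\mathbb Z$-basis $\{e_{\lambda,d}:|\lambda|=d\}$, matching the basis $\{e_\lambda:|\lambda|=d\}$ of $({}_{\mathbb Z}\Lambda)_d$. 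Your closing remark correctly identifies the one substantive point: replacing $p_r$ by $e_r$ is what makes the argument work over $\mathbb Z$, since Newton's identity $ne_n=-\sum_a(-1)^ae_{n-a}p_a$ shows the $p_r$ do not integrally generate (as the paper itself notes). One small point worth making explicit, though implicit in what you wrote: the degree-by-degree stabilisation also justifies taking the inverse limit in the category of graded rings, which is the category the theorem lives in (the ungraded inverse limit would be strictly larger).
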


On the other hand, since
\[ ne_n=-\sum_{a=1}^n(-1)^ae_{n-a}p_a, \]
the $p_\lambda$ do not form a basis for ${}_{\mathbb Z}\Lambda$.

We can similarly study the subring ${}_{\mathbb Z}\Lambda[t]$ of $\Lambda[t]$. Then the formula
\[ P_\lambda(t) = e_{\lambda'} + \sum_{\mu<\lambda}\beta_{\lambda\mu}(t)e_{\mu'}, \quad \beta_{\lambda\mu}(t)\in\mathbb Z[t], \]
shows that the $P_\lambda(t)$ form a basis for ${}_{\mathbb Z}\Lambda[t]$.

However, since
\[ (t^n-1)e_n = \sum_{a=1}^n(-1)^ae_{n-a}c_a(t), \]
we see that the $c_n(t)$ do not even generate $\Lambda[t]$; one would need to invert each polynomial of the form $t^n-1$.

Similarly, using the description of the dual Schur functions, we see that they also do not form a basis of $\Lambda[t]$.

\section{Ringel-Hall Algebras}

We now review the theory of Ringel-Hall algebras, based on the work of Ringel and Green \cite{Ringel1, Green1}.

Let $k$ be a field and let $\mathcal A$ be an abelian (or, more generally, exact) $k$-linear category which
\begin{itemize}
\item is skeletally small, so the isomorphism classes of objects form a set;
\item is hereditary, so that $\Ext^2(-,-)=0$;
\item has finite dimensional hom and ext spaces;
\item has split idempotents, so idempotent endomorphisms induce direct sum decompositions. (This is automatic if $\mathcal A$ is abelian.)
\end{itemize}
The last two conditions imply that $\End(A)$ is a finite-dimensional algebra, which is local precisely when $A$ is indecomposable. Thus $\mathcal A$ is a Krull-Schmidt category, so every object is isomorphic to a direct sum of indecomposable objects in an essentially unique way.

We define the Euler characteristic of $\mathcal A$ to be
\[ \langle M,N\rangle := \dim\Hom(M,N) - \dim\Ext^1(M,N). \]
Since $\mathcal A$ is hereditary, this descends to a bilinear map on the Grothendieck group $K_0(\mathcal A)$ of $\mathcal A$. We shall also need its symmetrisation
\[ (M,N) := \langle M,N\rangle + \langle M,N\rangle. \]
Let
\[ \mathrm{rad}(-,-) = \{\alpha:(\alpha,\beta)=0\textrm{ for all }\beta\} \]
denote the radical of the symmetric bilinear form on $K_0(\mathcal A)$, and set\footnote{
It would be interesting to determine which abelian groups admit such a non-degenerate integer-valued symmetric bilinear form. It is clear that such a group is torsion-free, and easy to show that every finite rank subgroup is free. Thus one can use Pontryagin's Theorem to deduce that every countable subgroup is free (see for example \cite{Fuchs}). On the other hand, if $(x,x)=0$ implies $x=0$, so the form is a Yamabe function \cite{Yamabe}, then the whole group is free. For, if it has finite rank, then it is free \cite{Yamabe}, whereas if the rank is at least 5, then we may assume the form is positive definite (c.f. \cite{Neumann-Neumann}), in which case it is free by \cite{Steprans}.
}
\[ \overline K_0(\mathcal A):=K_0(\mathcal A)/\mathrm{rad}(-,-). \]
In algebraic geometry, this quotient is commonly called the numerical Grothendieck group.

\subsection{Hall numbers}

Given objects $M,N,X$ we define
\[ \mathcal E_{MN}^X := \{(f,g):0\to N\xrightarrow{f}X\xrightarrow{g}M\to 0\textrm{ exact}\}. \]
We observe that $\Aut X$ acts on $\mathcal E_{MN}^X$ via $\alpha_X\cdot(f,g):=(\alpha_X f,g\alpha_X^{-1})$. The map $\theta\mapsto 1+f\theta g$ induces an isomorphism between $\Hom(M,N)$ and the stabiliser of $(f,g)$, and the quotient $\mathcal E_{MN}^X/\Aut X$ equals
\[ \Ext^1(M,N)_X = \{\textrm{extension classes having middle term isomorphic to $X$}\}. \]

On the other hand, $\Aut M\times\Aut N$ acts freely on $\mathcal E_{MN}^X$ via $(\alpha_M,\alpha_N)\cdot(f,g):=(f\alpha_N^{-1},\alpha_M g)$, and we define
\[ \mathcal F_{MN}^X := \frac{\mathcal E_{MN}^X}{\Aut M\times\Aut N} \]
to be the quotient. In the special case when $\mathcal A=\modcat R$ is the category of finite dimensional $R$-modules for some $k$-algebra $R$, then the map $(f,g)\mapsto\mathrm{Im}(f)$ yields the alternative defintion
\[ \mathcal F_{MN}^X = \{U\leq X:U\cong N,\ X/U\cong M\}. \]
This can then be iterated to give
\[ \mathcal F_{M_1\cdots M_n}^X := \{ 0=U_n\leq\cdots\leq U_1\leq U_0=X:U_{i-1}/U_i\cong M_i\}, \]
which is the set of filtrations of $X$ with subquotients $(M_1,\ldots,M_n)$ ordered from the top down.

Taking the union over all possible cokernels we obtain
\[ \coprod_{[M]}\mathcal F_{MN}^X = \frac{\mathrm{Inj}(N,X)}{\Aut(N)}, \]
where $\mathrm{Inj}(N,X)$ is the set of all (admissible) monomorphisms from $N$ to $X$. A dual result obviously holds if we take the union over all possible kernels.

Now suppose that $k$ is a finite field. Then all the sets we have defined so far are finite, so we may consider their cardinalities. We define
\[ a_X:=|\Aut X|, \quad E_{MN}^X:=|\mathcal E_{MN}^X| \quad\textrm{and}\quad F_{MN}^X:=|\mathcal F_{MN}^X|. \]
The $F_{MN}^X$ are called Hall numbers. Note that
\[ E_{MN}^X = \frac{|\Ext^1(M,N)_X|a_X}{|\Hom(M,N)|} \quad\textrm{and}\quad F_{MN}^X = \frac{E_{MN}^X}{a_Ma_N} = \frac{|\Ext^1(M,N)_X|}{|\Hom(M,N)|}\frac{a_X}{a_Ma_N}. \]
The latter is commonly referred to as Riedtmann's Formula \cite{Riedtmann}.

\subsection{The Ringel-Hall Algebra}

We use the numbers $F_{MN}^X$ as structure constants to define the Ringel-Hall algebra $\HA(\mathcal A)$. Let $v_k\in\mathbb R$ be the positive square-root of $|k|$ and let $\bQ_k:=\bQ(v_k)\subset\mathbb R$. Then $\HA(\mathcal A)$ is the $\bQ_k$-algebra with basis the isomorphism classes of objects in $\mathcal A$ and multiplication
\[ u_Mu_N:=v_k^{\langle M,N\rangle}\sum_{[X]}F_{MN}^Xu_X. \]
Note that the sum is necessarily finite, since $\Ext^1(M,N)$ is a finite set.

\begin{Thm}[Ringel]
$\HA(\mathcal A)$ is an associative algebra with unit $[0]$. Moreover, it is naturally graded by $K_0(\mathcal A)$.
\end{Thm}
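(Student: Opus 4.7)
The plan is to verify each of the three assertions (grading, unit, associativity) in turn; the first two are immediate while associativity is the essential content.

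For the $K_0$-grading, observe that $F_{MN}^X$ is nonzero only when there exists a short exact sequence $0\to N\to X\to M\to 0$, which forces $[X]=[M]+[N]$ in $K_0(\mathcal A)$. Hence the structure constants of $\HA(\mathcal A)$ respect the $K_0$-grading, and setting the degree of $u_M$ to be $[M]$ gives the desired graded algebra structure. For the unit, a short exact sequence $0\to 0\to X\to M\to 0$ forces $X\cong M$, so $F_{M,0}^X=\delta_{[X],[M]}$, and symmetrically $F_{0,N}^X=\delta_{[X],[N]}$. Combined with $\langle M,0\rangle=\langle 0,N\rangle=0$, this yields $u_Mu_0=u_M=u_0u_M$.

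For associativity, I expand both sides. Since $\langle-,-\rangle$ factors through $K_0$, and $F_{LM}^Y$ is zero unless $[Y]=[L]+[M]$, the $v_k$-exponent appearing in
\[ (u_Lu_M)u_N=\sum_{[Y],[X]}v_k^{\langle L,M\rangle+\langle Y,N\rangle}F_{LM}^YF_{YN}^Xu_X \]
simplifies to $\langle L,M\rangle+\langle L,N\rangle+\langle M,N\rangle$; the same exponent appears for $u_L(u_Mu_N)$. So associativity reduces to the purely combinatorial identity
\[ \sum_{[Y]}F_{LM}^YF_{YN}^X=\sum_{[Z]}F_{MN}^ZF_{LZ}^X \qquad\text{for every fixed }L,M,N,X. \]

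The main obstacle is this identity, which I would prove by showing that each side equals $|\mathcal F_{LMN}^X|$. Working in the module-category setting for transparency, a three-step filtration $0\subseteq U_2\subseteq U_1\subseteq X$ with subquotients $(L,M,N)$ from the top down can be recovered from two different kinds of data: on the one hand, from the quotient $Y:=X/U_2$ together with the submodule $U_2\leq X$ (contributing $F_{YN}^X$) and the submodule $U_1/U_2\leq Y$ (contributing $F_{LM}^Y$); on the other, from the submodule $Z:=U_1\leq X$ (contributing $F_{LZ}^X$) and the submodule $U_2\leq Z$ (contributing $F_{MN}^Z$). Both assignments are bijections with $\mathcal F_{LMN}^X$, yielding the identity. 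For a general exact hereditary category $\mathcal A$, the same argument must be phrased in terms of equivalence classes in $\mathcal E_{MN}^X/(\Aut M\times\Aut N)$ and composable pairs of admissible monomorphisms, where the bijection becomes an instance of the $3\times 3$-lemma: every composable pair of conflations in $\mathcal A$ extends uniquely (up to equivalence) to a diagram realising a three-step filtration, and conversely. This compatibility is the technical heart of the argument and the step where the axioms of an exact category are genuinely used.
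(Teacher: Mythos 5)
Your proof is correct and takes essentially the same approach as the paper: both reduce associativity to the identity $\sum_{[Y]}F_{LM}^{Y}F_{YN}^{X}=\sum_{[Z]}F_{MN}^{Z}F_{LZ}^{X}$, established via the $3\times3$/pull-back--push-out diagram. The only cosmetic difference is that you phrase the bijection in terms of the filtration sets $\mathcal F_{LMN}^{X}$ (and so, implicitly, divide out by $\Aut L\times\Aut M\times\Aut N$ from the start), whereas the paper works directly with the sets $\mathcal E$ and the diagonal $\Aut A$- and $\Aut B$-actions, which is the form that carries over verbatim to exact categories --- a point you correctly flag at the end.
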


\begin{proof}
Given $L$, $M$, $N$ and $X$, the pull-back/push-out constructions
\[ \begin{CD}
&& 0 && 0\\
&& @VVV @VVV\\
&& N @= N\\
&& @VVV @VVV\\
0 @>>> B @>>> X @>>> L @>>> 0\\
&& @VVV @VVV @|\\
0 @>>> M @>>> A @>>> L @>>> 0\\
&& @VVV @VVV\\
&& 0 && 0
\end{CD} \]
induce bijections
\[ \coprod_{[A]}\frac{\mathcal E_{LM}^A\times\mathcal E_{AN}^X}{\Aut A} \longleftrightarrow \coprod_{[B]}\frac{\mathcal E_{LB}^X\times\mathcal E_{MN}^B}{\Aut B}, \]
where the automorphism groups act diagonally, and hence freely. This, together with $\langle A,-\rangle=\langle L,-\rangle+\langle M,-\rangle$, yields the associativity law.

Since $F_{MN}^X\neq0$ only if there exists a short exact sequence $0\to N\to X\to M\to 0$, it follows that $\HA(\mathcal A)$ is graded by $K_0(\mathcal A)$.
\end{proof}

Dually, we can endow $\HA(\mathcal A)$ with the structure of a coalgebra. Define
\[ \Delta(u_X) := \sum_{[M],[N]} v_k^{\langle M,N\rangle} \frac{E_{MN}^X}{a_X}u_M\otimes u_N. \]
Since
\[ \Delta(a_Xu_X) = \sum_{[M],[N]} v_k^{\langle M,N\rangle} F_{MN}^X(a_Mu_M)\otimes(a_Nu_N), \]
we see that the comultiplication is in essence dual to the multiplication.

N.B. In order for this definition to make sense, we need that each object $X$ has only finitely many subobjects (that is, $\mathcal A$ is finitely well-powered). This is clearly satisfied for $\mathcal A=\modcat R$.

\begin{Thm}
$\HA(\mathcal A)$ is a graded coassociative coalgebra with counit $\varepsilon([X])=\delta_{[X][0]}$.
\end{Thm}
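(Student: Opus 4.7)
The plan is to dualise the associativity argument verbatim: the same pull-back/push-out bijection that established associativity of the multiplication will simultaneously establish coassociativity of the comultiplication, once both sides of the coassociativity identity are unpacked.

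First I would compute $(\Delta\otimes 1)\Delta(u_X)$ and $(1\otimes\Delta)\Delta(u_X)$ explicitly. Applying the definition twice, the first expression is a sum over triples $[L],[M],[N]$ of objects, together with an intermediate class $[A]$, of terms
\[ v_k^{\langle L,M\rangle+\langle A,N\rangle}\,\frac{E_{LM}^A\,E_{AN}^X}{a_A a_X}\,u_L\otimes u_M\otimes u_N, \]
while $(1\otimes\Delta)\Delta(u_X)$ is the analogous sum indexed by an intermediate class $[B]$ with weights $v_k^{\langle M,N\rangle+\langle L,B\rangle}E_{LB}^X E_{MN}^B/(a_B a_X)$. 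Bilinearity of the Euler form gives $\langle A,N\rangle=\langle L,N\rangle+\langle M,N\rangle$ and $\langle L,B\rangle=\langle L,M\rangle+\langle L,N\rangle$, so the two $v_k$-exponents agree. For the remaining combinatorial factors, the key identity is precisely the bijection used in the associativity proof:
\[ \coprod_{[A]}\frac{\mathcal E_{LM}^A\times\mathcal E_{AN}^X}{\Aut A}\longleftrightarrow\coprod_{[B]}\frac{\mathcal E_{LB}^X\times\mathcal E_{MN}^B}{\Aut B}, \]
whose cardinality identity, since both actions are free, reads $\sum_{[A]}E_{LM}^A E_{AN}^X/a_A=\sum_{[B]}E_{LB}^X E_{MN}^B/a_B$. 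Matching coefficients of $u_L\otimes u_M\otimes u_N$ on each side yields coassociativity.

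Next I would verify the counit axiom. The sequences counted by $\mathcal E_{0N}^X$ are of the form $0\to N\xrightarrow{f}X\to 0\to 0$, and exactness forces $f$ to be an isomorphism, so $E_{0N}^X=a_X$ when $[X]=[N]$ and $0$ otherwise; symmetrically for $\mathcal E_{M0}^X$. Since $\langle 0,N\rangle=\langle M,0\rangle=0$, applying $\varepsilon\otimes 1$ to $\Delta(u_X)$ collapses the sum to $(a_X/a_X)u_X=u_X$, and similarly for $1\otimes\varepsilon$. Finally, gradedness of $\Delta$ by $K_0(\mathcal A)$ is immediate: $E_{MN}^X$ vanishes unless there is a short exact sequence $0\to N\to X\to M\to 0$, forcing $[X]=[M]+[N]$, and the factor $v_k^{\langle M,N\rangle}$ does not affect the grading.

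There is no real obstacle here, since the proof is entirely parallel to the associativity argument. The only subtlety worth checking is that $\Delta(u_X)$ is well-defined, i.e.\ a finite sum; but this follows from the hypothesis that $\mathcal A$ is finitely well-powered (as noted just before the statement), so $X$ has only finitely many subobjects and hence only finitely many pairs $([M],[N])$ contribute a nonzero $E_{MN}^X$.
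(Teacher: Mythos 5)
Your proof is correct and follows essentially the same route as the paper, which simply says ``This follows from the same formula that proves associativity''; you have filled in exactly the details implicit in that remark (reindexing via bilinearity of $\langle-,-\rangle$ and the cardinality identity $\sum_{[A]}E_{LM}^AE_{AN}^X/a_A=\sum_{[B]}E_{LB}^XE_{MN}^B/a_B$ from the pull-back/push-out bijection), together with the straightforward counit and finiteness checks.
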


\begin{proof}
This follows from the same formula that proves associativity.
\end{proof}

Finally, Green used the heredity property to show that $\HA(\mathcal A)$ is a twisted bialgebra. We can construct an honest bialgebra by adjoining the group algebra of $\overline K_0(\mathcal A)$.

Recall that $\bQ_k[\overline K_0(\mathcal A)]$ is a Hopf algebra with basis $K_\alpha$ for $\alpha\in\overline K_0(\mathcal A)$ such that
\[ K_\alpha K_\beta := K_{\alpha+\beta}, \quad \Delta(K_\alpha) := K_\alpha\otimes K_\alpha, \quad \varepsilon(K_\alpha) := 1, \quad S(K_\alpha) := K_{-\alpha}. \]

We define $H(\mathcal A)$ to be a bialgebra such that the natural embedding of $\HA(\mathcal A)$ into $H(\mathcal A)$ is an algebra homomorphism, and the natural embedding of $\bQ_k[\overline K_0(\mathcal A)]$ into $H(\mathcal A)$ is a bialgebra homomorphism.

We do this by first defining $H(\mathcal A)$ to be the smash product\footnote{
See for example \cite{DNR}.
}
$\HA(\mathcal A)\#\mathbb Q_k[\overline K_0(\mathcal A)]$, where we make $\HA(\mathcal A)$ into a $\mathbb Q_k[\overline K_0(\mathcal A)]$-module algebra via
\[ K_\alpha\cdot u_X := v_k^{(\alpha,[X])}u_X. \]
As a vector space, $H(\mathcal A)=\HA(\mathcal A)\otimes\mathbb Q_k[\overline K_0(\mathcal A)]$, and since the $K_\alpha$ are group-like, we just have
\[ K_\alpha u_X=v^{(\alpha,[X])}u_XK_\alpha. \]
Green's result then implies that $H(\mathcal A)$ is a bialgebra, where
\[ \Delta([X]) := \sum_{[M],[N]} v_k^{\langle M,N\rangle}\frac{E_{MN}^X}{a_X}[M]K_N\otimes[N] \quad\textrm{and}\quad \varepsilon([X]) := \delta_{[X]0}. \]
We extend the grading by letting each $K_\alpha$ have degree $0$.

We can also define an antipode on $H(\mathcal A)$, as done by Xiao in \cite{Xiao}.

\begin{Thm}[Green, Xiao]
$H(\mathcal A)$ is a self-dual graded Hopf algebra. Given a linear map $\dim\colon K_0(\mathcal A)\to\mathbb Z$ we have the non-degenerate Hopf pairing
\[ \langle [M]K_\alpha,[N]K_\beta\rangle := \delta_{[M][N]}\frac{v_k^{(\alpha,\beta)+2\dim M}}{a_M}. \]
\end{Thm}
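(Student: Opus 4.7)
My plan is to verify each Hopf pairing axiom on the orthogonal basis $\{[M]K_\alpha\}$, relying on the fact that $H(\mathcal A)$ is already a bialgebra by Green's theorem. Once Green's formula is available, every axiom reduces to a mechanical matching of $v_k$-exponents in which the $+2\dim M$ shift in the pairing is calibrated to cancel the $a_M a_N$ denominator arising from Riedtmann's formula $F_{MN}^X = E_{MN}^X/(a_M a_N)$.

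Non-degeneracy and compatibility with the grading are immediate: the basis is orthogonal with nonzero diagonal values $v_k^{(\alpha,\alpha)+2\dim M}/a_M$, and each graded piece is finite-dimensional since $k$ is finite. Symmetry of the pairing follows from the symmetry of $(-,-)$, and the counit axiom $\langle 1_H,y\rangle = \varepsilon(y)$ is direct from $1_H = [0]K_0$ and $a_0 = 1$.

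For the main axiom $\langle xy, z\rangle = \langle x\otimes y, \Delta(z)\rangle$ I would take basis elements $x = [M]K_\alpha$, $y = [N]K_\beta$, $z = [Y]K_\gamma$ and expand. Both sides collapse to a common scalar multiple of $F_{MN}^Y/a_Y$ when $[Y] = [M]+[N]$; the $v_k$-exponents match using $\dim Y = \dim M + \dim N$ and the symmetry of $(-,-)$, and the $K_\alpha$-twist $K_\alpha u_X = v_k^{(\alpha,[X])} u_X K_\alpha$ in the smash product supplies exactly the cross term needed to account for the shift $K_Q \mapsto K_{Q+\gamma}$ appearing in $\Delta([Y]K_\gamma)$.

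For the antipode I would use the standard recursive construction. The degree-$0$ subalgebra $\mathbb Q_k[\overline K_0(\mathcal A)]$ is already a Hopf algebra with $S(K_\alpha) = K_{-\alpha}$, and since the positive-degree pieces of $H(\mathcal A)$ are finite-dimensional one solves $m(S \otimes \mathrm{id})\Delta = \eta\varepsilon$ recursively, giving Xiao's antipode. Compatibility $\langle S(x), y\rangle = \langle x, S(y)\rangle$ then follows from uniqueness of convolution inverses: both $y \mapsto \langle S(-),y\rangle$ and $y \mapsto \langle -, S(y)\rangle$ invert the identity convolution against the non-degenerate pairing, and they agree on generators. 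The only genuine obstacle is Green's theorem itself, which underlies the bialgebra structure but is invoked rather than re-proved here.
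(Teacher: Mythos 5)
The paper itself offers no proof of this theorem; it is stated as a citation, with Green's work supplying the twisted bialgebra structure and Xiao's the antipode and Hopf pairing. So the comparison is really a question of whether your sketch is sound.

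Your verification of the main axiom $\langle xy,z\rangle=\langle x\otimes y,\Delta(z)\rangle$ is correct: on $x=[M]K_\alpha$, $y=[N]K_\beta$, $z=[Y]K_\gamma$, both sides reduce to a scalar times $F_{MN}^Y/a_Y$, the $v_k$-exponents balance via $\dim Y=\dim M+\dim N$ and symmetry of $(-,-)$, and the $a_Ma_N$ denominators from Riedtmann's formula cancel the $1/a_M$ and $1/a_N$ of the pairing. The counit, symmetry, and antipode-compatibility points are likewise fine, the latter by the standard fact that a bialgebra pairing between Hopf algebras is automatically compatible with the antipodes.

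However, your non-degeneracy argument contains two errors. First, the basis $\{[M]K_\alpha\}$ is \emph{not} orthogonal: for a fixed $[M]$ the pairing $\langle[M]K_\alpha,[M]K_\beta\rangle = v_k^{(\alpha,\beta)+2\dim M}/a_M$ is nonzero for \emph{all} $\alpha,\beta$, so the Gram matrix is block-diagonal in $[M]$ but dense within each block. Second, the graded pieces of $H(\mathcal A)$ are \emph{not} finite dimensional: the $K_\alpha$ all sit in degree $0$, so the degree-$0$ part already contains the infinite-dimensional group algebra $\bQ_k[\overline K_0(\mathcal A)]$. Non-degeneracy still holds, but the argument is not ``immediate.'' The block structure reduces the question to non-degeneracy of the form $(K_\alpha,K_\beta)\mapsto v_k^{(\alpha,\beta)}$ on $\bQ_k[\overline K_0(\mathcal A)]$; here each $\alpha$ determines the character $\beta\mapsto v_k^{(\alpha,\beta)}$ of $\overline K_0(\mathcal A)$, these characters are pairwise distinct (since $v_k>1$ is not a root of unity and $(-,-)$ is non-degenerate on $\overline K_0(\mathcal A)$ by construction), and distinct characters of an abelian group are linearly independent. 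This Artin--Dedekind linear-independence step is the missing ingredient you should supply.
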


If $\mathcal A=\modcat R$, then one usually takes the linear functional $\dim$ to be the dimension as a $k$-vector space.

\section{Cyclic Quivers, I}

Let $C_1$ be the quiver with a single vertex $1$ and a single loop $a$.
\begin{center}
\newbox\ASYbox
\newdimen\ASYdimen
\def\ASYbase#1#2{\setbox\ASYbox=\hbox{#1}\ASYdimen=\ht\ASYbox%
\setbox\ASYbox=\hbox{#2}\lower\ASYdimen\box\ASYbox}
\def\ASYalign(#1,#2)(#3,#4)#5#6{\leavevmode%
\setbox\ASYbox=\hbox{#6}%
\setbox\ASYbox\hbox{\ASYdimen=\ht\ASYbox%
\advance\ASYdimen by\dp\ASYbox\kern#3\wd\ASYbox\raise#4\ASYdimen\box\ASYbox}%
\put(#1,#2){%
\wd\ASYbox 0pt\dp\ASYbox 0pt\ht\ASYbox 0pt%
\box\ASYbox%
}}
\def\ASYraw#1{#1}
\setlength{\unitlength}{1pt}
\includegraphics{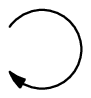}%
\definecolor{ASYcolor}{gray}{0.000000}\color{ASYcolor}
\fontsize{9.962640}{11.955168}\selectfont
\usefont{OT1}{cmr}{m}{n}%
\ASYalign(-60.349470,11.632017)(-0.500000,-0.320001){1.000000 0.000000 0.000000 1.000000}{$C_1\colon$}
\ASYalign(-31.896770,11.632017)(-0.500000,-0.500000){1.000000 0.000000 0.000000 1.000000}{$1$}
\ASYalign(-3.444070,11.632017)(-0.500000,-0.500000){1.000000 0.000000 0.000000 1.000000}{$a$}
\end{center}
For a field $k$ let $\Cat_1(k)$ be the category of $k$-representations of $C_1$; i.e. the category of functors from $C_1$ to finite dimensional $k$-vector spaces. Thus a representation $M$ is given by a finite dimensional vector space $M(1)$ together with an endomorphism $M(a)$.

Equivalently, we can view $\Cat_1(k)$ as the category of finite dimensional $k[T]$-modules. For, such a module is determined by a vector space together with an endomorphism describing how $T$ acts.

Since $k[T]$ is a principal ideal domain, every finite dimensional module can be written as
\[ M \cong \bigoplus_{p,r} \big(k[T]/p^r\big)^{m_r(p)}, \quad\textrm{where } p\in k[T]\textrm{ is monic irreducible and } r\in\mathbb N. \]
In terms of matrices, this corresponds to a rational normal form, generalising the Jordan normal form over algebraically closed fields.

More restrictively, we define $\Cat_1^0(k)$ to be the full subcategory of nilpotent modules. Thus we only allow the irreducible polynomial $p=T$, and hence the isomorphism classes are indexed by partitions (giving the block sizes in the Jordan normal form). As such, this indexing set is independent of the field. We write
\[ M_\lambda := \bigoplus_r \big(k[T]/T^r\big)^{m_r(\lambda)} = \bigoplus_i k[T]/T^{\lambda_i}. \]
We observe that $\Cat_1^0(k)$ is a uniserial length category\footnote{
The lattice of submodules of an indecomposable module is a finite chain.
}
satisfying our previous conditions. Also, $K_0\cong\mathbb Z$ via $[M]\mapsto\dim M$, and $\overline K_0=0$ since the Euler characteristic is identically zero.

Now let $k$ be a finite field. Then the Ringel-Hall algebra $H_1(k):=H(\Cat_1^0(k))$ has basis $u_\lambda:=[M_\lambda]$ and is $\mathbb N$-graded via $\deg(u_\lambda)=|\lambda|$.

\subsection{Examples of Hall numbers}

We now compute the Hall numbers in some easy cases. We shall simplify notation slightly and just write $F_{\lambda\mu}^\xi$ instead of $F_{M_\lambda M_\mu}^{M_\xi}$. This preempts the next section where we prove that the Hall numbers are given by specialising certain Hall polynomials. We set $q:=|k|$.

\begin{enumerate}
\item Let $r=a+b$. Since the category $\Cat_1^0(k)$ is uniserial we have
\[ F_{(a)(b)}^{(r)} = 1. \]
\item For $a\geq2$ and any $b$ we have
\[ u_{(a)}u_{(1^b)} = u_{(1^{b-1} a+1)} + q^bu_{(1^b a)}. \]
For, consider a short exact sequence
\[ 0 \to M_{(1^b)} \to X \to M_{(a)}\to 0. \]
We immediately see that $\mathrm{soc}(X)$ has dimension either $b$ or $b+1$. Similarly, $X$ has Loewy length either $a$ or $a+1$. If $\mathrm{soc}(X)\cong M_{(1^b)}$, then this submodule is uniquely determined, and since the cokernel is isomorphic to $M_{(a)}$, we must have $X\cong M_{(1^{b-1}a+1)}$ with corresponding Hall number 1. On the other hand, if $\mathrm{soc}(X)\cong M_{(1^{b+1})}$, then since $X$ has Loewy length $a$ we have $X\cong M_{(1^ba)}$ and the sequence must be split. In this case $\Ext^1(M_{(a)},M_{(1^b)})_{M_{(1^ba)}}=0$, so we can use Riedtmann's Formula to calculate that $F_{(a)(1^b)}^{(1^ba)}=|\Hom(M_{(1^b)},M_{(a)})|=q^b$.
\item Let $r=a+b$. Then $\mathcal F_{(1^a)(1^b)}^{(1^r)}$ is isomorphic to the Grassmannian $\mathrm{Gr}\binom{r}{a,b}$. For, given the semisimple module $M_{(1^r)}$, choosing a submodule isomorphic to $M_{(1^b)}$ is equivalent to choosing a $b$-dimensional subspace of an $r$-dimensional vector space, and for each such choice, the cokernel will necessarily be semisimple, hence isomorphic to $M_{(1^a)}$. Thus
\[ F_{(1^a)(1^b)}^{(1^r)} = q^{ab}\begin{bmatrix}r\\a\end{bmatrix}_{q^{-1}} \quad\textrm{where}\quad \begin{bmatrix}r\\a\end{bmatrix}_t:=\frac{\phi_r(t)}{\phi_a(t)\phi_b(t)}. \]
\item More generally, we have the formula
\[ F_{\lambda(1^m)}^\xi = q^{n(\xi)-n(\lambda)-n(1^m)}\prod_{i\geq1}\begin{bmatrix}\xi'_i-\xi'_{i+1}\\\xi'_i-\lambda'_i\end{bmatrix}_{q^{-1}}. \]
\item We next prove that
\[ F_{\lambda\mu}^\xi\neq0 \quad\textrm{implies}\quad \lambda\cup\mu\leq\xi\leq\lambda+\mu, \]
where $\lambda\cup\mu$ is the partition formed by concatentating the parts of $\lambda$ and $\mu$, so $m_r(\lambda\cup\mu)=m_r(\lambda)+m_r(\mu)$, and $\lambda+\mu$ is formed by adding the corresponding parts of $\lambda$ and $\mu$, so $(\lambda+\mu)_i=\lambda_i+\mu_i$. Note that $(\lambda\cup\mu)'=\lambda'+\mu'$, so these concepts are dual to one another.

To see this, suppose we have a short exact sequence
\[ 0\to M_\mu\xrightarrow{\iota} M_\xi\xrightarrow{\pi} M_\lambda\to 0. \]
We first consider the socle series for $M_\xi$ compared with the socle series of $M_\lambda\oplus M_\mu$. We have
\begin{align*}
\dim\mathrm{soc}^i(M_\xi) &= \xi'_1+\cdots+\xi'_i\\
\dim\mathrm{soc}^i(M_\lambda\oplus M_\mu) &= \lambda'_1+\mu'_1+\cdots+\lambda'_i+\mu'_i.
\end{align*}
Since $\pi\mathrm{soc}^i(M_\xi)\subset\mathrm{soc}^i(M_\lambda)$ and $\iota^{-1}\mathrm{soc}^i(M_\xi)\subset\mathrm{soc}^i(M_\mu)$, we must have $\xi'\leq(\lambda\cup\mu)'$, or equivalently $\xi\geq\lambda\cup\mu$.

On the other hand, define
\[ M_\xi^{(\leq i)}:=\bigoplus_{j\leq i}M_{\xi_j} \]
to be the sum of the $i$ largest indecomposable summands of $M_\xi$. Then $\pi\big(M_\xi^{(\leq i)}\big)$ and $\iota^{-1}\big(M_\xi^{(\leq i)}\big)$ each have at most $i$ summands, we must have $\xi_1+\cdots+\xi_i\leq\lambda_1+\mu_1+\cdots+\lambda_i+\mu_i$, so that $\xi\leq\lambda+\mu$.
\item Finally, we note that
\[ F_{\lambda\mu}^\xi=F_{\mu\lambda}^\xi, \]
so that the Ringel-Hall algebra is both commutative and cocommutative (since it is self-dual).

For this we observe that there is a natural duality on the category $\Cat_1^0(k)$ given by $D=\Hom_k(-,k)$, and that $D(M_\lambda)\cong M_\lambda$.
\end{enumerate}

\subsection{Hall Polynomials}\label{HallPolys}

We now show that, for each triple $(\lambda,\mu,\xi)$, there exist polynomials $F_{\lambda\mu}^\xi(T)\in\mathbb Z[T]$ such that, for any finite field $k$ and any objects $M_\lambda,M_\mu,M_\xi\in\Cat_1^0(k)$ of types $\lambda,\mu\xi$ respectively,
\[ F_{M_\lambda M_\mu}^{M_\xi} = F_{\lambda\mu}^\xi(|k|). \]
The polynomials $F_{\lambda\mu}^\xi$ are called Hall polynomials, and allow one to form a generic Ringel-Hall algebra.

\begin{Thm}\label{Thm1}
There exist
\begin{enumerate}
\item integers $d(\lambda,\mu)$ such that, over any field $k$,
\[ d(\lambda,\mu) = \dim\Hom(M_\lambda,M_\mu). \]
In fact,
\[ d(\lambda,\mu) = \sum_{i,j}\min\{\lambda_i,\mu_j\}, \quad \textrm{so}\quad d(\lambda,\lambda)=2n(\lambda)+|\lambda|. \]
\item monic integer polynomials $a_\lambda$ such that, over any finite field $k$,
\[ a_\lambda(|k|) = a_{M_\lambda} = |\Aut(M_\lambda)|. \]
In fact,
\[ a_\lambda = T^{2n(\lambda)+|\lambda|}b_\lambda(T^{-1}). \]
\item integer polynomials $F_{\lambda\mu}^\xi$ such that, over any finite field $k$,
\[ F_{\lambda\mu}^\xi(|k|) = F_{M_\lambda M_\mu}^{M_\xi}. \]
Moreover,
\[ F_{\lambda\mu}^\xi = c_{\lambda\mu}^\xi T^{n(\xi)-n(\lambda)-n(\mu)} + \textrm{lower degree terms}, \]
where $c_{\lambda\mu}^\xi$ is the Hall-Littlewood coefficient.
\end{enumerate}
\end{Thm}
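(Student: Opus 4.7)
Part (1) is a direct verification: the structure theorem for $k[T]$-modules gives $\Hom(k[T]/T^a,k[T]/T^b)\cong k[T]/T^{\min\{a,b\}}$, a $k$-vector space of dimension $\min\{a,b\}$ independent of $k$; summing over the parts of $\lambda$ and $\mu$ gives $d(\lambda,\mu)=\sum_{i,j}\min\{\lambda_i,\mu_j\}$, and the specialisation $d(\lambda,\lambda)=2n(\lambda)+|\lambda|$ is the identity derived in Section~2 by counting Young-diagram boxes two different ways. Part (2) is similarly direct: $\End(M_\lambda)$ has Jacobson radical whose semisimple quotient is a product of matrix algebras over $k$ indexed by the multiplicities $m_r(\lambda)$, so lifting units from the quotient yields
\[ |\Aut(M_\lambda)| = |k|^{d(\lambda,\lambda)}\prod_r\phi_{m_r(\lambda)}(|k|^{-1}), \]
and $a_\lambda(T):=T^{2n(\lambda)+|\lambda|}b_\lambda(T^{-1})\in\mathbb{Z}[T]$ is monic and does the job.

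Part (3) is the substantive statement. My plan is to bootstrap from the special case $\mu=(1^m)$ treated in Example~4, where $F_{\lambda(1^m)}^\xi$ was evaluated by intersecting a candidate semisimple submodule of $M_\xi$ with the socle filtration of $M_\xi$ and identifying the fibres as Grassmannians; this produces an explicit polynomial in $|k|$ with integer coefficients. To reduce the general case to this one, I would show that the elements $u_{(1^m)}$ generate $H_1(k)$ over $\mathbb{Z}[q]$. Combining the bound $\lambda\cup\mu\le\xi\le\lambda+\mu$ of Example~5 with a filtration-counting argument (any submodule chain of $X$ with semisimple subquotients of the prescribed dimensions forces $\dim\mathrm{soc}^i(X)\ge\xi'_1+\cdots+\xi'_i$), one finds that
\[ u_{(1^{\xi_1'})}u_{(1^{\xi_2'})}\cdots u_{(1^{\xi_\ell'})} = u_\xi + \sum_{\mu<\xi}c_{\xi\mu}(q)u_\mu \]
is unitriangular in dominance order with coefficients $c_{\xi\mu}(q)\in\mathbb{Z}[q]$ and diagonal entry exactly $1$ (because for $X\cong M_\xi$ the only filtration of this shape is $U_j=T^jX$). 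Inverting this by induction on $\xi$ expresses each $u_\xi$ as a $\mathbb{Z}[q]$-combination of iterated products of $u_{(1^m)}$'s; iterated application of Example~4 then writes $F_{\lambda\mu}^\xi(q)$ as an element of $\mathbb{Z}[q]$.

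For the degree and leading-coefficient assertion I would pass to the specialisation $t:=q^{-1}$ and compare with the Hall-Littlewood basis $\{P_\lambda(t)\}$ of $\Lambda[t]$. After rescaling each $u_\lambda$ by a suitable power of $q$ (essentially $q^{-n(\lambda)}$, chosen so that the triangular expansion above matches the defining triangularity (a) of $P_\lambda(t)$ from Section~2), the uniqueness of families satisfying that triangularity forces the rescaled $u_\lambda$ to map to $P_\lambda(t)$ under a $\mathbb{Q}(t)$-algebra isomorphism sending each $u_{(1^m)}$ to $e_m=P_{(1^m)}(t)$. The structure constants of $P_\lambda(t)P_\mu(t)$ in the $P_\xi(t)$-basis are polynomials in $t$ whose $t=0$ values are the Littlewood-Richardson coefficients $c_{\lambda\mu}^\xi$ (since $P_\lambda(0)=s_\lambda$ and $s_\lambda s_\mu=\sum_\xi c_{\lambda\mu}^\xi s_\xi$); reversing the rescaling converts this into the asserted top-degree term $c_{\lambda\mu}^\xi T^{n(\xi)-n(\lambda)-n(\mu)}$. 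The main obstacle is the triangularity step: one must verify with care both the upper-dominance support and the non-vanishing (indeed, equality with $1$) of the diagonal coefficient of the iterated product, which requires tracking the socle-length bounds of Example~5 through an entire chain of extensions rather than a single one, and it is precisely this combinatorial bookkeeping that encodes the link with Hall-Littlewood theory.
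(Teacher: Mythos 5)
Your proposal follows the same route the paper outlines: parts (1) and (2) are the direct computations the paper dismisses as ``easy to prove,'' and for part (3) you take exactly the Schiffmann/Zelevinsky path the paper sketches---polynomiality from Example~(4) plus the unitriangular expansion of iterated products of semisimples via Example~(5), then the degree and leading-coefficient claim by reading off the structure constants of the Hall--Littlewood basis under the map $P_\lambda(t)\mapsto q^{n(\lambda)}u_\lambda$. The one place you are a little loose, and which the paper is equally terse about, is the unitriangularity step: the lower-dominance support of the iterated product comes from the upper bound $\xi\le\lambda+\mu$ of Example~(5) applied repeatedly, while the diagonal coefficient $1$ comes from the uniqueness of the socle (or radical) filtration of $M_\xi$ with semisimple layers of the prescribed sizes---two distinct observations that you compress into a single remark.
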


The first two statements are easy to prove. For the third, there are several approaches.

In \cite{Macdonald2}, Macdonald proves this using the Littlewood-Richardson rule for computing the coefficients $c_{\lambda\mu}^\xi$. In particular, he first shows how each short exact sequence determines an LR-sequence, so one can decompose the Hall number as $F_{\lambda\mu}^\xi=\sum_SF_S$ corresponding to the possible LR-sequences. Finally he proves that each $F_S$ is given by a universal polynomial.

Alternatively, as detailed in \cite{Schiffmann6}, one can use Example (4) above to prove polynomiality. By iteration, using Example (5), one sees that there exist integer polynomials $f_\lambda^\xi$ such that
\[ u_{(1^{\lambda'_r})}\cdots u_{(1^{\lambda'_1})} = \sum_{\mu\leq\lambda} f_\lambda^\mu(q) u_\mu. \]
Note also that $f_\lambda^\lambda=1$ since the corresponding filtration of $M_\lambda$ is just the socle series. Inverting this shows that any $u_\lambda$ can be expressed as a sum of products of the $u_{(1^r)}$ with coefficients given by integer polynomials. The existence of Hall polynomials follows quickly.

In an appendix to \cite{Macdonald2} Zelevinsky shows how this approach can be taken further to give the full statement. Using Hall's Theorem below, which only requires the existence of Hall polynomials, one sees that
\[ P_\lambda(t)P_\mu(t) = \sum_\xi t^{n(\xi)-n(\lambda)-n(\mu)}F_{\lambda\mu}^\xi(t^{-1})P_\xi(t). \]
Since the coefficients must lie in $\mathbb Z[t]$, we deduce that $F_{\lambda\mu}^\xi$ has degree at most $n(\xi)-n(\lambda)-n(\mu)$. Moreover, since the Hall-Littlewood polynomials specialise at $t=0$ to the Schur functions, we see immediately that the coefficient of $t^{n(\xi)-n(\lambda)-n(\mu)}$ in $F_{\lambda\mu}^\xi(t)$ is precisely the Littlewood-Richardson coefficient $c_{\lambda\mu}^\xi$.

A completely different proof of polynomiality is offered in \cite{Hubery3}, using the Hopf algebra structure\footnote{
More precisely, it uses Green's Formula \cite{Green1}, which is the formula needed to prove that the Ringel-Hall algebra is a twisted bialgebra.
}
in an intrinsic way. This approach allows one to quickly reduce to the case when $M_\xi$ is indecomposable, in which case the Hall number is either 1 or 0. One can then complete the result in the same manner as Zelevinsky above.

\subsection{Hall's Theorem}

We can use these polynomials to define the generic Ringel-Hall algebra $H_1$ over the ring $\bQ(v)$ of rational functions. This has basis $u_\lambda$ and
\begin{align*}
&\textrm{multiplication} & u_\lambda u_\mu &= \sum_\xi F_{\lambda\mu}^{\xi}(v^2)u_\xi,\\
&\textrm{comultiplication} & \Delta(u_\xi) &= \sum_{\lambda,\mu} F_{\lambda\mu}^{\xi}(v^2)\frac{a_\lambda(v^2)a_\mu(v^2)}{a_\xi(v^2)}u_\lambda\otimes u_\mu,\\
&\textrm{and Hopf pairing} & \langle u_\lambda,u_\mu\rangle &= \frac{\delta_{\lambda\mu}}{v^{4n(\lambda)}b_\lambda(v^{-2})}.
\end{align*}

\begin{Thm}[Steinitz, Hall, Macdonald]\label{Hall}
There is a monomorphism of self-dual graded Hopf algebras
\[ \Phi_1\colon \Lambda[t] \to H_1, \quad t\mapsto v^{-2}, \quad t^{n(\lambda)}P_\lambda(t)\mapsto u_\lambda. \]
\end{Thm}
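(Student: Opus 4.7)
The plan is to define $\Phi_1$ as a $\mathbb{Q}$-algebra homomorphism from the polynomial ring $\Lambda[t] = \mathbb{Q}[t][e_1, e_2, \ldots]$ by setting $\Phi_1(t) := v^{-2}$ and $\Phi_1(e_n) := v^{n(n-1)} u_{(1^n)}$. The exponent is forced by $e_n = P_{(1^n)}(t)$ together with $n((1^n)) = \binom{n}{2}$, so that the prescribed formula $t^{n(\lambda)} P_\lambda(t) \mapsto u_\lambda$ specialises correctly at $\lambda = (1^n)$. The map is a well-defined algebra homomorphism since $\Lambda[t]$ is free on the $e_n$ over $\mathbb{Q}[t]$ and $H_1$ is commutative by Example~(6).

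To identify the image of the Hall--Littlewood basis, I would iterate Example~(5) to obtain the triangular expansion $u_{(1^{\lambda'_1})} \cdots u_{(1^{\lambda'_r})} = u_\lambda + \sum_{\mu<\lambda} f_\lambda^\mu(v^2)\, u_\mu$, and then use the identity $\sum_i\binom{\lambda'_i}{2}=n(\lambda)$ to get $\Phi_1(e_{\lambda'}) = v^{2n(\lambda)}\big(u_\lambda + \sum_{\mu<\lambda}f_\lambda^\mu(v^2)\, u_\mu\big)$. Combining this with the triangular defining relation $P_\lambda(t)=e_{\lambda'}+\sum_{\mu<\lambda}\beta_{\lambda\mu}(t)\, e_{\mu'}$ yields $\Phi_1(P_\lambda(t)) = v^{2n(\lambda)} u_\lambda + \sum_{\nu<\lambda} c_\nu\, u_\nu$ for some coefficients $c_\nu\in\mathbb{Q}(v)$. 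To kill the lower terms I would invoke orthogonality: both $\{P_\lambda(t)\}$ and $\{u_\lambda\}$ are orthogonal bases for their respective pairings, so once pairing compatibility is established, induction on the dominance order combined with the triangular shape forces the correction coefficients $c_\nu$ to vanish, giving $\Phi_1(P_\lambda(t)) = v^{2n(\lambda)} u_\lambda$ exactly.

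Coalgebra and pairing compatibilities need only be checked on the generators $e_n$. For the pairing, $\langle e_n,e_n\rangle_t = \phi_n(t)^{-1}$, while using $b_{(1^n)}(t)=\phi_n(t)$ and the Hopf pairing formula from the previous section gives $\langle u_{(1^n)},u_{(1^n)}\rangle = v^{-2n(n-1)}\phi_n(v^{-2})^{-1}$; the scalar $v^{2n(n-1)}$ in $\Phi_1(e_n)$ cancels exactly, producing $\phi_n(v^{-2})^{-1} = \Phi_1(\phi_n(t)^{-1})$. For the coproduct, $\Delta(e_n)=\sum_{a+b=n}e_a\otimes e_b$; since $\overline K_0(\Cat^0_1)=0$ removes the Cartan part from $H_1$, a direct computation using Example~(3) gives $\Delta(u_{(1^n)}) = \sum_{a+b=n} v^{-2ab} u_{(1^a)}\otimes u_{(1^b)}$, which matches the coproduct of $\Phi_1(e_n)$ via the identity $n(n-1)-2ab = a(a-1)+b(b-1)$. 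Injectivity is then immediate since $\Phi_1$ sends the $\mathbb{Q}[t]$-basis $\{P_\lambda(t)\}$ to the $\mathbb{Q}(v)$-linearly independent family $\{v^{2n(\lambda)} u_\lambda\}\subset H_1$. The main obstacle is the orthogonality-based vanishing in the second paragraph: while the triangular shape follows routinely from the Hall-number computations, pinning the lower coefficients as zero requires the full Hopf-algebra structure acting in concert, most cleanly done via a uniqueness characterisation of each orthogonal triangular basis and a simultaneous induction.
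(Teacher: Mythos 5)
Your proposal is correct, but it follows what the paper explicitly identifies as the classical route (Macdonald, Schiffmann) rather than the route the paper itself takes. You anchor $\Phi_1$ on the free $\mathbb{Q}[t]$-generators $e_n\mapsto v^{n(n-1)}u_{(1^n)}$ and verify coproduct and pairing compatibility there; the paper instead anchors $\Phi_1$ on the cyclic functions, $c_r(t)\mapsto x_r=(1-q^{-1})u_{(r)}$. The paper's stated reason is computational economy: the coproduct of $u_{(r)}$ comes from Example~(1), where every relevant Hall number equals $1$ by uniseriality, whereas your approach needs the Grassmannian count of Example~(3) for $\Delta(u_{(1^n)})$ together with iterated use of Example~(5) for the unitriangular expansion of $\Phi_1(e_{\lambda'})$. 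What your route buys in return is that the $e_n$ actually freely generate $\Lambda[t]$ over $\mathbb{Q}[t]$, so the algebra map is immediately well-defined on the nose; the $c_r(t)$ only generate after inverting each $t^n-1$, so strictly speaking the paper's definition has to be read as defining a map on $\Lambda\otimes\mathbb{Q}(t)$ and restricting. The two routes converge at the last step: the paper's Proposition~\ref{HL-fns} is precisely your orthogonality-plus-triangularity argument — $q^{n(\lambda)}u_\lambda$ is unitriangular in $\{\Phi_1(e_{\mu'})\}$ and orthogonal with squared norm $b_\lambda(q^{-1})^{-1}$, which is the defining characterisation of the Hall--Littlewood basis, so no further induction is needed. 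One point you should make explicit rather than leave implicit: reducing pairing compatibility to the generators $e_n$ is legitimate only because $\Phi_1$ has already been shown to be a graded bialgebra map, so that $B(f,g):=\langle\Phi_1(f),\Phi_1(g)\rangle$ is a graded Hopf pairing on $\Lambda[t]$ and is hence determined by its values on a set of homogeneous algebra generators via $\langle f,gh\rangle=\langle\Delta(f),g\otimes h\rangle$ and degree considerations.
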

Note that this induces an isomorphism with $\Lambda[t]\otimes\mathbb Q(t^{1/2})$.

The images of some of our special symmetric functions are given by
\begin{gather*}
e_r \mapsto v^{r(r-1)}u_{(1^r)}, \quad c_r(t) \mapsto (1-v^{-2})u_{(r)}, \quad h_r \mapsto \sum_{|\lambda|=r}u_\lambda,\\
p_r \mapsto \sum_{|\lambda|=r}(1-v^2)\cdots(1-v^{2\ell(\lambda)-2})u_\lambda.
\end{gather*}
We observe that $M_{(1^r)}$ is a semisimple, or elementary, module, and that $M_{(r)}$ is an indecomposable, or cyclic, module, so the terminology in these cases corresponds well.

\subsection{Proving Hall's Theorem}

We now describe one approach to proving Theorem \ref{Hall}. This is based upon ensuring that our map $\Phi_1$ is a monomorphism of self-dual Hopf algebras, rather than just an algebra map, and first finds candidates for the images of the cyclic functions $c_r(t)$, rather than the elementary symmetric functions $e_r$. This latter is the more common approach (see for example Macdonald \cite{Macdonald2} or Schiffmann \cite{Schiffmann6}), but requires the more difficult formula from Example (4). By starting with the cyclic functions, the formulae needed are much easier. We shall simplify notation by writing $q:=v^2$.

Consider the elements $u_{(r)}$, corresponding to the indecomposable modules $M_{(r)}$. Observe that each submodule and factor module of an indecomposable module is again indecomposable, and that $a_{(r)}=T^r(1-T^{-1})$. Therefore we can use Example (1) to deduce that
\[ \Delta(u_{(r)}) = u_{(r)}\otimes1+1\otimes u_{(r)}+\sum_{a=1}^{r-1}(1-q^{-1})u_{(a)}\otimes u_{(r-a)}. \]
We set
\[ X(T)=1+\sum_{r\geq1}x_rT^r := 1+(1-q^{-1})\sum_{r\geq1}u_{(r)}T^r, \]
so that
\[ \Delta(X(T))=X(T)\otimes X(T). \]

We next observe that the $x_r$ are algebraically independent and generate the Hall algebra. Setting $x_\lambda:=\prod_ix_{\lambda_i}$, we need to check that the $x_\lambda$ form a basis for $H_1$.

\begin{Prop}\label{x_lambda}
\[ x_\lambda = q^{n(\lambda)}b_\lambda(q^{-1})u_\lambda + \sum_{\mu>\lambda}\gamma_{\lambda\mu}u_\mu \quad\textrm{for some }\gamma_{\lambda\mu}. \]
\end{Prop}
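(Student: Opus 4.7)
The plan is to induct on $\ell := \ell(\lambda)$, rewriting $x_\lambda = (1-q^{-1})^\ell\, u_{(\lambda_1)}u_{(\lambda_2)}\cdots u_{(\lambda_\ell)}$. The base case $\ell=1$ is immediate since $x_{(r)} = (1-q^{-1})u_{(r)}$, $n((r))=0$, and $b_{(r)}(q^{-1})=1-q^{-1}$. For the inductive step, put $\lambda^\circ:=(\lambda_2,\ldots,\lambda_\ell)$ so that $x_\lambda = x_{\lambda_1}\,x_{\lambda^\circ}$, and by the inductive hypothesis
\[ x_{\lambda^\circ} = q^{n(\lambda^\circ)}\,b_{\lambda^\circ}(q^{-1})\,u_{\lambda^\circ} + \sum_{\nu>\lambda^\circ}\gamma'_\nu u_\nu. \]

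Example (5) gives $u_{(\lambda_1)}u_\nu\in\bigoplus_{\xi\ge(\lambda_1)\cup\nu}\bQ(v)\,u_\xi$. Since conjugation reverses dominance and satisfies $(\alpha\cup\beta)'=\alpha'+\beta'$, the operation $\nu\mapsto(\lambda_1)\cup\nu$ preserves dominance, with $(\lambda_1)\cup\lambda^\circ=\lambda$; hence each $u_{(\lambda_1)}u_\nu$ for $\nu>\lambda^\circ$ lies in $\bigoplus_{\xi>\lambda}\bQ(v)u_\xi$. Consequently the $u_\lambda$-coefficient of $x_\lambda$ comes only from $u_{(\lambda_1)}u_{\lambda^\circ}$ and equals $(1-q^{-1})\,q^{n(\lambda^\circ)}b_{\lambda^\circ}(q^{-1})\cdot F^\lambda_{(\lambda_1),\lambda^\circ}$.

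It then remains to compute the Hall number $F^\lambda_{(\lambda_1),\lambda^\circ}$, which is the number of submodules $U\subseteq M_\lambda$ with $U\cong M_{\lambda^\circ}$ and $M_\lambda/U\cong M_{(\lambda_1)}$. I would first show that the kernel type is in fact forced: if $U$ has any type $\mu$ with $M_\lambda/U\cong M_{(\lambda_1)}$, then Example (5) gives $\mu\cup(\lambda_1)\le\lambda$, and writing this out on conjugates, together with $|\mu|=|\lambda|-\lambda_1$, pins $\mu$ down to $\lambda^\circ$. Thus $F^\lambda_{(\lambda_1),\lambda^\circ}$ equals the total number of surjections $M_\lambda\twoheadrightarrow M_{(\lambda_1)}$ divided by $|\Aut M_{(\lambda_1)}|=q^{\lambda_1}(1-q^{-1})$.

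A homomorphism $M_\lambda\to M_{(\lambda_1)}$ is determined by the images of the generators of the cyclic summands of $M_\lambda$. Summands of size $\lambda_i<\lambda_1$ contribute images forced to lie in $T^{\lambda_1-\lambda_i}M_{(\lambda_1)}\subseteq TM_{(\lambda_1)}$, while each of the $m:=m_{\lambda_1}(\lambda)$ summands of size $\lambda_1$ contributes an arbitrary image in $M_{(\lambda_1)}$. The homomorphism is surjective precisely when at least one of the latter $m$ images lies outside $TM_{(\lambda_1)}$, giving $q^{|\lambda|}(1-q^{-m})$ surjections in total; hence
\[ F^\lambda_{(\lambda_1),\lambda^\circ} = q^{|\lambda^\circ|}\frac{1-q^{-m}}{1-q^{-1}}. \]
Together with $n(\lambda)=n(\lambda^\circ)+|\lambda^\circ|$ and $b_\lambda(q^{-1})=b_{\lambda^\circ}(q^{-1})(1-q^{-m})$ (the latter because only the multiplicity $m_{\lambda_1}$ changes between $\lambda^\circ$ and $\lambda$, and by one), the leading coefficient collapses to $q^{n(\lambda)}b_\lambda(q^{-1})$, closing the induction. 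The main technical point is the combined monotonicity plus type-forcing argument; once those are in place, the surjection count and the algebraic simplification are routine.
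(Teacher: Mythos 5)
Your proof takes a different route from the paper: you peel off a single maximal part at a time (inducting on $\ell(\lambda)$), rather than peeling off the entire maximal-multiplicity block $(r^m)$ as the paper does, and you compute the relevant Hall number by directly counting surjections $M_\lambda\twoheadrightarrow M_{(\lambda_1)}$ rather than via Riedtmann's Formula. Both routes have to pin down the isomorphism type of the kernel, and both arrive at the same numerical answer. The structure of your induction and the bookkeeping with $n(\lambda)$ and $b_\lambda$ are all correct.

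However, there is a genuine gap in the step where you claim Example~(5) forces the kernel type. From $\mu\cup(\lambda_1)\leq\lambda$ together with $|\mu|=|\lambda^\circ|$, writing on conjugates as you suggest only yields $\mu\leq\lambda^\circ$ in dominance order, not $\mu=\lambda^\circ$; and the other inequality $\lambda\leq(\lambda_1)+\mu$ is also compatible with strictly smaller $\mu$. A concrete witness: take $\lambda=(3,2,1)$, so $\lambda_1=3$, $\lambda^\circ=(2,1)$, and try $\mu=(1,1,1)$. Then $(3)\cup(1,1,1)=(3,1,1,1)\leq(3,2,1)$ and $(3,2,1)\leq(3)+(1,1,1)=(4,1,1)$, so both constraints from Example~(5) hold, yet $\mu\neq\lambda^\circ$. (It is still true that $F^\lambda_{(\lambda_1)\mu}=0$ here, since the only semisimple submodule of $M_{(3,2,1)}$ of dimension $3$ is the socle and its quotient is $M_{(2,1)}$; but Example~(5) is not what rules it out.) To repair the argument, you need the direct module-theoretic fact, which is in any case close to your surjection count: if $\pi\colon M_\lambda\twoheadrightarrow M_{(\lambda_1)}$ and $\lambda_1$ is the largest part, then since the image of each cyclic summand $k[T]/T^{\lambda_i}$ lies in $T^{\lambda_1-\lambda_i}M_{(\lambda_1)}$, surjectivity forces some summand with $\lambda_i=\lambda_1$ to map onto $M_{(\lambda_1)}$; that restriction is then an isomorphism, the sequence splits, and by Krull--Schmidt the kernel is $M_{\lambda^\circ}$. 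With this replacement the rest of your proof goes through.
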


\begin{proof}
Consider first the case $\lambda=(r^m)$. We prove by induction on $m$ that
\[ x_{(r^m)} = (1-q^{-1})^m u_{(r)}^m = q^{r\binom{m}{2}}\phi_m(q^{-1})u_{(r^m)}+\sum_{\mu>(r^m)}\gamma_{(r^m)\mu}u_\mu. \]
Multipying by $x_r$ we get
\[ x_{(r^{m+1})} = (1-q^{-1})q^{r\binom{m}{2}}\phi_m(q^{-1})u_{(r)}u_{(r^m)} + (1-q^{-1})\sum_{\mu>(r^m)}\gamma_{(r^m)\mu}u_{(r)}u_\mu. \]
Every summand $u_\xi$ satisfies $\xi\geq(r)\cup(r^m)$ by Example (5), so we just need to consider the coefficient of $u_{(r^{m+1})}$. This necessarily comes from the split exact sequence, so we can use Riedtmann's Formula together with
\[ \Ext^1(M_{(r)},M_{(r^m)})_{M_{(r^{m+1})}}=0 \]
to get
\begin{align*}
F_{(r)(r^m)}^{(r^{m+1})} &= \frac{a_{(r^{m+1})}}{q^{d((r),(r^m))}a_{(r)}a_{(r^m)}} = \frac{q^{r(m+1)^2}\phi_{m+1}(q^{-1})}{q^{rm}\cdot q^r(1-q^{-1})\cdot q^{rm}\phi_m(q^{-1})}\\
&= q^{rm}\frac{1-q^{-m-1}}{1-q^{-1}}.
\end{align*}
Hence the coefficient of $u_{(r^{m+1})}$ is $q^{r\binom{m+1}{2}}\phi_{m+1}(q^{-1})$ as claimed.

In general we can write $\lambda=(r^m)\cup\bar\lambda$ with $\bar\lambda_i<r$ for all $r$, so $x_\lambda=x_{(r^m)}x_{\bar\lambda}$. By induction we have the result for $x_{\bar\lambda}$, and by Example (5) we know that any summand $u_\mu$ must satisfy $\mu\geq(r^m)\cup\bar\lambda=\lambda$. So, we just need to consider the coefficient of $u_\lambda$, which again must come from the split exact sequence. Applying Riedtmann's Formula as before we deduce that
\[ F_{(r^m)\bar\lambda}^\lambda = q^{d((r^m),\bar\lambda)} = q^{m|\bar\lambda|}, \]
since $\bar\lambda_i<r$ for all $i$.

Now, $n(\lambda)=n(\bar\lambda)+m|\bar\lambda|+r\binom{m}{2}$ and $b_\lambda(q^{-1})=b_{\bar\lambda}(q^{-1})\phi_m(q^{-1})$, so the result follows.
\end{proof}

We immediately see that the $x_\lambda$ form a basis of the Ringel-Hall algebra. For, the $u_\lambda$ are by definition a basis for the Ringel-Hall algebra, and the proposition proves that the transition matrix from the $u_\lambda$ to the $x_\lambda$ for the partitions $|\lambda|=r$ is (with respect to a suitable ordering) upper triangular with non-zero diagonal entries.

Finally,
\[ \langle u_{(r)},u_{(r)}\rangle = (1-q^{-1})^{-1}, \quad\textrm{so}\quad \langle x_r,x_r\rangle = 1-q^{-1}. \]
It follows that we can define a monomorphism of self-dual graded Hopf algebras
\[ \Phi_1\colon\Lambda[t]\to H_1, \quad t\mapsto q^{-1}, \quad c_r(t)\mapsto x_r=(1-q^{-1})u_{(r)}. \]

We now compute the images of the other symmetric functions. For the complete symmetric functions, we use the formula
\[ \sum_{a=1}^rt^{r-a}h_{r-a}c_a(t) = (1-t^r)h_r. \]

\begin{Prop}
We have
\[ \sum_{a=1}^rq^{a-r}(1-q^{-1})\sum_{|\lambda|=r-a}u_\lambda u_{(a)} = (1-q^{-r})\sum_{|\xi|=r}u_\xi. \]
Thus
\[ \Phi_1(h_r) = \sum_{|\xi|=r}u_\xi. \]
\end{Prop}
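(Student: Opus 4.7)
The plan is to prove the displayed identity directly by extracting the coefficient of $u_\xi$ on each side for every $\xi$ with $|\xi|=r$, and then deduce the formula for $\Phi_1(h_r)$ by induction on $r$. Specifically, the relation $(1-t^r)h_r = \sum_{a=1}^r t^{r-a}h_{r-a}c_a(t)$ already established in $\Lambda[t]$ can be pushed forward through the algebra homomorphism $\Phi_1$ (which by construction sends $c_a(t)\mapsto (1-q^{-1})u_{(a)}$) to give
\[ (1-q^{-r})\Phi_1(h_r) = \sum_{a=1}^r q^{a-r}(1-q^{-1})\,\Phi_1(h_{r-a})\,u_{(a)}. \]
Assuming inductively $\Phi_1(h_{r-a}) = \sum_{|\lambda|=r-a}u_\lambda$ for $a\geq 1$, this rewrites the right-hand side as the LHS of the proposition, so the identity forces $\Phi_1(h_r)=\sum_{|\xi|=r}u_\xi$. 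The base case $r=0$ is trivial.

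To prove the identity, I use commutativity of $H_1$ to express the coefficient of $u_\xi$ in the LHS as
\[ (1-q^{-1})\sum_{a=1}^r q^{a-r}\,N(\xi,a), \qquad N(\xi,a) := \big|\{U\leq M_\xi : M_\xi/U\cong M_{(a)}\}\big|. \]
The cleanest way to compute $N(\xi,a)$ is to apply the duality $D=\Hom_k(-,k)$ from Example (6), which satisfies $DM_\lambda\cong M_\lambda$ and hence puts surjections $M_\xi\twoheadrightarrow M_{(a)}$ in bijection with injections $M_{(a)}\hookrightarrow M_\xi$. Such an injection is determined by the image of the generator, which is an element $v\in M_\xi$ with $\mathrm{ann}(v)=T^a$. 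Writing $S_a:=\sum_i\min\{\xi_i,a\}=\sum_{j=1}^a\xi'_j$, a component-wise count gives $|M_\xi[T^a]|=q^{S_a}$, and combining with $|\Aut M_{(a)}|=|(k[T]/T^a)^\times|=q^{a-1}(q-1)$ yields
\[ N(\xi,a) = \frac{q^{S_a}-q^{S_{a-1}}}{q^{a-1}(q-1)}. \]

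Substituting, one finds $(1-q^{-1})q^{a-r}N(\xi,a) = q^{S_a-r}-q^{S_{a-1}-r}$, and summing over $a=1,\ldots,r$ telescopes to $q^{S_r-r}-q^{-r}$. Since $\xi_1\leq r$, one has $S_r = \sum_j\xi'_j = |\xi| = r$, so the coefficient is $1-q^{-r}$, matching the RHS. The only conceptually non-routine step is recognising, via duality, that the combinatorially awkward sum $\sum_\lambda F_{\lambda(a)}^\xi$ collapses to the simple count $|M_\xi[T^a]|-|M_\xi[T^{a-1}]|$; once this reduction is made, the telescoping and the induction are automatic.
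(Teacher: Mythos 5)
Your proof is correct and follows the same strategy as the paper: extract the coefficient of $u_\xi$, reduce the inner sum over $\lambda$ to a count of submodules of $M_\xi$, compute that count via $\dim\operatorname{Hom}(M_{(a)},M_\xi)=\sum_j\min\{\xi_j,a\}$, and telescope; the concluding induction using $(1-t^r)h_r=\sum_a t^{r-a}h_{r-a}c_a(t)$ is also what the paper leaves implicit in the word ``Thus.'' The only stylistic remark is that your commutativity-then-duality step is a detour that loops back to where you started: $\sum_\lambda F_{\lambda(a)}^\xi$ already counts submodules of $M_\xi$ isomorphic to $M_{(a)}$, which is $|\mathrm{Inj}(M_{(a)},M_\xi)|/|\Aut M_{(a)}|$ directly, and this is precisely the quantity you compute; the paper goes there without invoking either commutativity or the duality $D$.
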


\begin{proof}
The coefficient of $u_\xi$ on the left hand side is given by
\[ q^{-r}\sum_{a=1}^r\sum_\lambda q^a(1-q^{-1}) F_{\lambda(a)}^\xi. \]
Now, for fixed $a$ and $\xi$, we have seen that
\[ \sum_\lambda F_{\lambda(a)}^\xi = |\{U\leq M_\xi:U\cong M_{(a)}\}| = \frac{|\mathrm{Inj}(M_{(a)},M_\xi)|}{|\Aut(M_{(a)})|}. \]
If we write
\[ d_a:=d((a),\xi)=\dim\Hom(M_{(a)},M_\xi), \]
then this becomes
\[ q^a(1-q^{-1})\sum_\lambda F_{\lambda(a)}^\xi = q^{d_a}-q^{d_{a-1}}. \]
Thus the coefficient of $u_\xi$ on the left hand side is
\[ q^{-r}\sum_{a=1}^r\big(q^{d_a}-q^{d_{a-1}}\big) = q^{-r}\big(q^{d_r}-q^{d_0}\big) = q^{-r}(q^r-1) = 1-q^{-r}. \]
Here we have used that $d_0=0$, and that $d_r=r$ since the module $M_\xi$ has Loewy length at most $r$.
\end{proof}

We next consider the elementary symmetric functions, using the formula
\[ \sum_{a=1}^r(-1)^ae_{r-a}c_a(t) = (t^r-1)e_r. \]

\begin{Prop}
We have
\[ (1-q^{-1})\sum_{a=1}^r(-1)^aq^{\binom{r-a}{2}}u_{(1^{r-a})}u_{(a)} = (q^{-r}-1)q^{\binom{r}{2}}u_{(1^r)}. \]
Thus
\[ \Phi_1(e_r) = q^{\binom{r}{2}}u_{(1^r)}. \]
\end{Prop}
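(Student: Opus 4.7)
The plan is to verify the displayed Hall-algebra identity directly by computing the coefficient of each basis element $u_\xi$ on the left-hand side, and then deduce the statement $\Phi_1(e_r) = q^{\binom{r}{2}}u_{(1^r)}$ by induction on $r$. For the induction, we apply $\Phi_1$ to the symmetric function identity $(t^r-1)e_r = \sum_{a=1}^r(-1)^a e_{r-a}c_a(t)$; the base case $r=1$ gives $\Phi_1(e_1)=u_{(1)}$, and the inductive step uses $\Phi_1(e_{r-a}) = q^{\binom{r-a}{2}}u_{(1^{r-a})}$ for $a\geq 1$ together with $\Phi_1(c_a(t))=(1-q^{-1})u_{(a)}$. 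The image of $e_r$ is then determined by solving for it, and the displayed identity is exactly what is needed to conclude $\Phi_1(e_r)=q^{\binom{r}{2}}u_{(1^r)}$.

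To establish the identity, I would first use Example (5) together with commutativity (Example (6)) to restrict which $\xi$ can appear. Since $F_{(1^{r-a})(a)}^\xi \neq 0$ forces $(a,1^{r-a})\leq\xi\leq(a+1,1^{r-a-1})$, the only contributing partitions are hooks $\xi=(b,1^c)$ with $b+c=r$ and $1\leq b\leq r$. Moreover, for a fixed hook $(b,1^c)$, only the two values $a=b$ (giving the split term $(b,1^c)=(a,1^{r-a})$) and $a=b-1$ (giving the non-split term $(b,1^c)=(a+1,1^{r-a-1})$) can contribute to the sum.

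Next I would compute the two Hall numbers using Example (4) applied to $F_{(a)(1^{r-a})}^{(b,1^c)}$ (after using commutativity to switch the factors), tracking the Gaussian binomials. For $b\geq 2$ the factors all collapse to $1$, yielding
\[ F_{(1^c)(b)}^{(b,1^c)} = q^{c} \quad\textrm{and}\quad F_{(1^{c+1})(b-1)}^{(b,1^c)} = 1. \]
The coefficient of $u_{(b,1^c)}$ on the LHS becomes
\[ (1-q^{-1})(-1)^b\bigl[q^{\binom{c}{2}}q^{c} - q^{\binom{c+1}{2}}\bigr], \]
which vanishes by the identity $\binom{c}{2}+c=\binom{c+1}{2}$. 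So all hook partitions other than $(1^r)$ contribute zero.

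Finally, for $\xi=(1^r)$, semisimplicity of $M_{(1^r)}$ forces $a=1$ (since $M_{(a)}$ is not semisimple for $a\geq 2$), and Example (4) yields
\[ F_{(1)(1^{r-1})}^{(1^r)} = q^{r-1}\frac{1-q^{-r}}{1-q^{-1}}. \]
Plugging into the LHS with $(-1)^1 q^{\binom{r-1}{2}}$ and using $\binom{r-1}{2}+r-1 = \binom{r}{2}$ produces $q^{\binom{r}{2}}(q^{-r}-1)u_{(1^r)}$, which is exactly the RHS. The main obstacle is the bookkeeping for the two-term cancellation at each hook $\xi$, and being careful that the formula from Example (4) really eliminates all non-hook $\xi$; once the cancellation identity $\binom{c}{2}+c=\binom{c+1}{2}$ is isolated, the rest is routine.
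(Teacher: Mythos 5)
Your proof is correct, but it takes a different computational route from the paper's. The paper expands the left-hand side directly using Example (2) (for $a\geq 2$) and Example (3) (for $a=1$), so it never needs to decide in advance which $\xi$ appear; everything falls out of the explicit formula $u_{(a)}u_{(1^b)} = u_{(1^{b-1}a+1)} + q^b u_{(1^ba)}$ and the resulting telescoping sum. You instead organize the computation by fixing a target partition $\xi$ and extracting its coefficient via Example (4); this is closer to the ``standard'' elementary-symmetric-function route that the paper deliberately avoids because Example (4) is the more elaborate formula.

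One point in your write-up needs tightening. You claim that Example (5) already restricts the contributing $\xi$ to hooks, but the dominance interval $[(a,1^{r-a}),\,(a+1,1^{r-a-1})]$ contains non-hook partitions such as $(a,2,1^{r-a-2})$ when $a\geq 2$ and $r-a\geq 2$, so Example (5) alone is not enough. You do acknowledge at the end that Example (4) must be checked to kill non-hooks; it would be cleaner to just make this the primary argument. Concretely, with $\lambda=(a)$ the $i=1$ Gaussian binomial in Example (4) is $\left[\begin{smallmatrix}\xi'_1-\xi'_2\\\xi'_1-1\end{smallmatrix}\right]$, which is zero unless $\xi'_2\leq 1$, i.e.\ unless $\xi$ is a hook --- this is the clean way to get the restriction. (Alternatively, the socle argument: $M_\xi/\iota(M_{(a)})$ semisimple forces $\mathrm{rad}\,M_\xi\subseteq\iota(M_{(a)})$, and $\iota(M_{(a)})$ indecomposable forces $\mathrm{rad}\,M_\xi$ indecomposable or zero, so $\xi$ is a hook.) With that clarified, your two-term cancellation via $\binom{c}{2}+c=\binom{c+1}{2}$ and the $a=1$ boundary term both check out, and the induction on $r$ applying $\Phi_1$ to $(t^r-1)e_r = \sum_a(-1)^ae_{r-a}c_a(t)$ is exactly the intended deduction of $\Phi_1(e_r)=q^{\binom{r}{2}}u_{(1^r)}$.
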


\begin{proof}
Using Example (2) when $a\geq 2$ and Example (3) when $a=1$, we can expand the left hand side to get
\begin{align*}
(1-q^{-1})&\sum_{a=1}^r(-1)^aq^{\binom{r-a}{2}}u_{(1^{r-a})}u_{(a)}\\
&= -q^{\binom{r-1}{2}-1}(q^r-1)u_{(1^r)}-q^{\binom{r-1}{2}}(1-q^{-1})u_{(1^{r-2}2)}\\
&\qquad\qquad + (1-q^{-1})\sum_{a=2}^r(-1)^aq^{\binom{r-a}{2}}\big(u_{(1^{r-a-1}a+1)}+q^{r-a}u_{(1^{r-a}a)}\big)\\
&= -q^{\binom{r-1}{2}+r-1}(1-q^{-r})u_{(1^r)}\\
&\qquad\qquad + (1-q^{-1})\sum_{a=2}^r(-1)^a\big(q^{\binom{r-a}{2}+r-a}-q^{\binom{r-a+1}{2}}\big)u_{(1^{r-a}a)}.
\end{align*}
Since $\binom{r-a}{2}+r-a=\binom{r-a+1}{2}$, this equals $-q^{\binom{r}{2}}(1-q^{-r})u_{(1^r)}$ as required.
\end{proof}

For the power sum functions we use the formula
\[ \sum_{a=0}^{r-1}(1-t^{r-a})p_{r-a}c_a(t) = rc_r(t). \]
For convenience we set
\[ y_r:=\sum_{|\lambda|=r}(1-q)\cdots(1-q^{\ell(\lambda)-1})u_\lambda. \]

\begin{Prop}
We have
\[ \sum_{a=1}^{r-1}(1-q^{a-r})y_{r-a}x_a = rx_r + (q^{-r}-1)y_r. \]
Thus
\[ \Phi_1(p_r)=y_r=\sum_{|\lambda|=r}(1-q)\cdots(1-q^{\ell(\lambda)-1})u_\lambda. \]
\end{Prop}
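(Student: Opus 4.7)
My plan is to proceed by induction on $r$, using the displayed identity to close the induction step. Starting from the $\Lambda[t]$ recurrence $(1-t^r)p_r = rc_r(t) - \sum_{a=1}^{r-1}(1-t^{r-a})p_{r-a}c_a(t)$, applying the Hopf algebra map $\Phi_1$ and substituting $\Phi_1(p_s) = y_s$ for $s<r$ (the base case $r=1$ is immediate from $(1-t)p_1 = c_1(t)$, giving $\Phi_1(p_1) = x_1/(1-q^{-1}) = u_{(1)} = y_1$) yields
\[ (1-q^{-r})\Phi_1(p_r) = rx_r - \sum_{a=1}^{r-1}(1-q^{a-r})y_{r-a}x_a. \]
Granted the stated identity, its right-hand side equals $(1-q^{-r})y_r$, and cancellation of $1-q^{-r}\neq 0$ in $\bQ(v)$ delivers $\Phi_1(p_r)=y_r$. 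The whole proof thus reduces to establishing the identity itself in $H_1$.

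For this, I would compare coefficients of $u_\xi$ on both sides for each partition $\xi$ with $|\xi|=r$. The right-hand side coefficient reads $r(1-q^{-1})\delta_{\xi,(r)} + (q^{-r}-1)\prod_{i=1}^{\ell(\xi)-1}(1-q^i)$, while the left-hand side, using $u_\lambda u_{(a)} = \sum_\xi F_{\lambda,(a)}^\xi u_\xi$, reads
\[ (1-q^{-1})\sum_{a=1}^{r-1}(1-q^{a-r})\sum_{|\lambda|=r-a}\Bigl(\prod_{i=1}^{\ell(\lambda)-1}(1-q^i)\Bigr)F_{\lambda,(a)}^\xi. \]
Since $F_{\lambda,(a)}^\xi$ counts cyclic submodules $U\leq M_\xi$ of length $a$ with $M_\xi/U\cong M_\lambda$, I would reorganise this into a single sum over pairs $(a,U)$, with $U$ a cyclic submodule of $M_\xi$ of length $a$, weighted by $(1-q^{a-r})\prod_{i=1}^{\ell(M_\xi/U)-1}(1-q^i)$.

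The main obstacle is evaluating this weighted cyclic-submodule sum. My approach is to exploit the explicit description $M_\xi = \bigoplus_i k[T]/T^{\xi_i}$: a cyclic submodule of length $a$ is generated by an element of order exactly $T^a$, and $\ell(M_\xi/U)$ depends on the projection pattern of the generator onto the summands. I would stratify by this combinatorial data, compute the weighted count for each stratum, and show that the alternating sum in $a$ telescopes, analogously to the $q^{d_a}-q^{d_{a-1}}$ cancellation that drove the computation of $\Phi_1(h_r)$. Sanity checks at $\xi=(r)$ (giving $r-1-rq^{-1}+q^{-r}$ on both sides) and at $\xi=(1^r)$ (where only $a=1$ contributes via Example (3), and both sides evaluate to $(q^{-r}-1)\prod_{i=1}^{r-1}(1-q^i)$) support the approach; I anticipate the delicate step to be handling intermediate partitions $\xi$ with several distinct part sizes, where the cyclic-submodule counts acquire nontrivial $q$-binomial factors that must be reconciled against the $\prod(1-q^i)$ weights.
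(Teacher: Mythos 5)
Your reduction of $\Phi_1(p_r)=y_r$ to the displayed identity via the recurrence $(1-t^r)p_r = rc_r(t) - \sum_{a=1}^{r-1}(1-t^{r-a})p_{r-a}c_a(t)$ is exactly the paper's setup, and your coefficient equation and the sanity checks at $\xi=(r)$ and $\xi=(1^r)$ are correct. But the identity itself is where the work lies, and your plan for it --- stratifying cyclic submodules of $M_\xi = \bigoplus_i k[T]/T^{\xi_i}$ by the projection pattern of a generator and hoping the resulting $q$-binomial factors telescope against the $\prod_i(1-q^i)$ weights --- is not carried out and is needlessly fine-grained; you give no argument for why the claimed telescope would emerge from that stratification.

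The simplification you are missing is that if $F_{\lambda(a)}^\xi\neq 0$ then $0\leq\ell(\xi)-\ell(\lambda)\leq 1$, because a cyclic submodule $U\cong M_{(a)}$ has one-dimensional socle. Hence the weight $\prod_{i=1}^{\ell(\lambda)-1}(1-q^i)$ attached to $\lambda$ takes only two values, distinguished precisely by whether $U\subseteq\mathrm{rad}(M_\xi)$. After dividing the coefficient of $u_\xi$ by $\prod_{i=1}^{\ell(\xi)-2}(1-q^i)$, all that remains involves two cumulative counts with closed forms: $q^a(1-q^{-1})\sum_\lambda F_{\lambda(a)}^\xi = |\mathrm{Inj}(M_{(a)},M_\xi)| = q^{d_a}-q^{d_{a-1}}$ with $d_a=\dim\Hom(M_{(a)},M_\xi)$, and $|\mathrm{Inj}(M_{(a)},\mathrm{rad}(M_\xi))| = q^{-\ell(\xi)}(q^{d_{a+1}}-q^{d_a})$, the latter because multiplication by $T$ gives a $q^{\ell(\xi)}$-to-one surjection from elements of $M_\xi$ of order exactly $T^{a+1}$ onto elements of $\mathrm{rad}(M_\xi)$ of order exactly $T^a$. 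The sum over $a$ then telescopes to $(1-q^r)(1-q^{\ell(\xi)-1})$ using $d_0=0$, $d_1=\ell(\xi)$, $d_r=r$; no $q$-binomial coefficients ever appear. Without this two-case dichotomy your proposed stratification would have to be pushed much further to close the computation, and no mechanism for the cancellation is provided.
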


\begin{proof}
Substituting in for $y$ and $x$ and multiplying by $q^r$, we have on the left hand side
\[ \sum_{a=1}^{r-1}\sum_{|\lambda|=r-a}\sum_{|\xi|=r}(q^{r-a}-1)\cdot(1-q)\cdots(1-q^{\ell(\lambda)-1})\cdot q^a(1-q^{-1})F_{\lambda(a)}^\xi u_\xi. \]
We next observe that if $F_{\lambda(a)}^\xi\neq0$, then $0\leq\ell(\xi)-\ell(\lambda)\leq1$. Hence for $\xi\neq(r)$ we can divide the coefficient of $u_\xi$ by $(1-q)\cdots(1-q^{\ell(\xi)-2})$ to leave
\[ \gamma_\xi := \sum_{a=1}^{r-1}(q^{r-a}-1)\cdot q^a(1-q^{-1})\Big((1-q^{\ell(\xi)-1})\sum_{\ell(\lambda)=\ell(\xi)}F_{\lambda(a)}^\xi + \sum_{\ell(\lambda)<\ell(\xi)}F_{\lambda(a)}^\xi\Big). \]
We recall that for fixed $a$ and $\xi$, and setting $d_a := \dim\Hom(M_{(a)},M_\xi)$, we have
\[ q^a(1-q^{-1})\sum_\lambda F_{\lambda(a)}^\xi = |\mathrm{Inj}(M_{(a)},M_\xi)| = q^{d_a}-q^{d_{a-1}}. \]
Furthermore, given a short exact sequence
\[ 0\to M_{(a)}\to M_\xi\to M_\lambda\to 0, \]
we have that $\ell(\lambda)=\ell(\xi)$ if and only if the image of $M_{(a)}$ is contained in the radical of $M_\xi$. Since
\[ |\mathrm{Inj}(M_{(a)},\mathrm{rad}(M_\xi))| = q^{-\ell(\xi)}|\mathrm{Inj}(M_{(a+1)},M_\xi)| = q^{-\ell(\xi)}(q^{d_{a+1}}-q^{d_a}), \]
we can substitute in to $\gamma_\xi$ to get
\[ \gamma_\xi = \sum_{a=1}^{r-1}(q^{r-a}-1)\Big((q^{d_a}-q^{d_{a-1}})-q^{-1}(q^{d_{a+1}}-q^{d_a})\Big) = (1-q^r)(1-q^{\ell(\xi)-1}), \]
using that $d_0=0$, $d_1=\ell(\xi)$ and $d_r=r$. This shows that the coefficients of $u_\xi$ on the left and right hand sides agree for all $\xi\neq(r)$.

Now consider $\xi=(r)$. Then $F_{\lambda(a)}^{(r)}=\delta_{\lambda(r-a)}$, so the coefficient of $u_{(r)}$ on the left hand side equals
\[ (1-q^{-1})\sum_{a=1}^{r-1}(1-q^{a-r}) = r(1-q^{-1}) + (q^{-r}-1) \]
finishing the proof.
\end{proof}

Finally, we wish to show that

\begin{Prop}\label{HL-fns}
\[ \Phi_1(P_\lambda(t)) = q^{n(\lambda)}u_\lambda. \]
\end{Prop}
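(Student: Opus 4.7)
The strategy is to verify that $q^{n(\lambda)} u_\lambda \in H_1$ satisfies, after transporting along $\Phi_1$ with $t \mapsto v^{-2} = q^{-1}$, the two characterising properties (a) and (b) of the Hall--Littlewood function $P_\lambda(t)$, and then to invoke a Gram--Schmidt uniqueness argument. The two input ingredients, the image $\Phi_1(e_r) = q^{\binom{r}{2}} u_{(1^r)}$ and the fact that $\Phi_1$ is a morphism of self-dual Hopf algebras, are already in hand.

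First, the pairing condition is immediate. Using property (b) and the self-duality of $\Phi_1$,
\[
\langle \Phi_1(P_\lambda(t)), \Phi_1(P_\mu(t))\rangle = \langle P_\lambda(t), P_\mu(t)\rangle_t\big|_{t = q^{-1}} = \delta_{\lambda\mu}\, b_\lambda(q^{-1})^{-1}.
\]
Combining $\langle u_\lambda, u_\mu\rangle = \delta_{\lambda\mu}/(q^{2n(\lambda)} b_\lambda(q^{-1}))$ with the scaling by $q^{n(\lambda)}$ yields exactly the same value for $\langle q^{n(\lambda)} u_\lambda, q^{n(\mu)} u_\mu\rangle$, so the two families have matching diagonal pairings and are orthogonal.

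Next, property (a) writes $P_\lambda(t) = e_{\lambda'} + \sum_{\mu < \lambda} \beta_{\lambda\mu}(t)\, e_{\mu'}$. Using $n(\lambda) = \sum_i \binom{\lambda'_i}{2}$ one has $\Phi_1(e_{\lambda'}) = q^{n(\lambda)} \prod_i u_{(1^{\lambda'_i})}$, and then the product expansion recorded in Section \ref{HallPolys},
\[
u_{(1^{\lambda'_r})} \cdots u_{(1^{\lambda'_1})} = u_\lambda + \sum_{\mu < \lambda} f_\lambda^\mu(q) u_\mu
\]
(with leading coefficient $1$ since only the socle series of $M_\lambda$ contributes, and all other $\mu$ strictly smaller in dominance by iterating Example (5)), together with the analogous expansions for each $\Phi_1(e_{\mu'})$ with $\mu < \lambda$ (which can only produce $u_\nu$ with $\nu \leq \mu < \lambda$), gives
\[
\Phi_1(P_\lambda(t)) = q^{n(\lambda)} u_\lambda + \sum_{\mu < \lambda} c_\mu\, u_\mu
\]
for some $c_\mu \in \bQ(v)$.

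To conclude, I induct on $\lambda$ in the dominance order, the base case (a dominance-minimal partition such as $(1^n)$) being immediate since the sum is empty. For the inductive step, suppose $\Phi_1(P_\nu(t)) = q^{n(\nu)} u_\nu$ for all $\nu < \lambda$. Then for each $\mu < \lambda$,
\[
0 = \langle \Phi_1(P_\lambda(t)), \Phi_1(P_\mu(t))\rangle = c_\mu\, q^{n(\mu)}\, \langle u_\mu, u_\mu\rangle,
\]
and since $\langle u_\mu, u_\mu\rangle$ is a nonzero element of $\bQ(v)$, each $c_\mu$ vanishes. The main obstacle is really the unitriangular expansion in the previous paragraph: both its dominance-bounding and the identification of the leading coefficient rest on the careful analysis in Section \ref{HallPolys}, but once those inputs are granted the Gram--Schmidt closure is formal orthogonality bookkeeping.
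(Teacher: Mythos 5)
Your argument is correct and rests on exactly the same three ingredients as the paper's proof: the formula $\Phi_1(e_r)=q^{\binom r2}u_{(1^r)}$, the unitriangular expansion of products of semisimples from Section~\ref{HallPolys}, and the pairing computation $\langle q^{n(\lambda)}u_\lambda,q^{n(\mu)}u_\mu\rangle=\delta_{\lambda\mu}b_\lambda(q^{-1})^{-1}$. The only cosmetic difference is that the paper pulls $q^{n(\lambda)}u_\lambda$ back to $\Lambda[t]$ and cites the uniqueness clause in the characterisation of $P_\lambda(t)$ as a black box, whereas you push $P_\lambda(t)$ forward and unwind that uniqueness into an explicit Gram--Schmidt induction on the dominance order; this is the same reasoning written in the opposite direction.
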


\begin{proof}
We begin by noting that
\[ \Phi_1(e_{\lambda'}) = \prod_i\Phi_1(e_{\lambda'_i}) = q^{\sum_i\binom{\lambda'_i}{2}}\prod_iu_{(1^{\lambda'_i})}=q^{n(\lambda)}\Big(u_\lambda+\sum_{\mu<\lambda}f_\lambda^\mu u_\mu\Big), \]
as mentioned in the discussion in Section \ref{HallPolys}.
Inverting this gives
\[ q^{n(\lambda)}u_\lambda = \Phi_1(e_{\lambda'}) + \sum_{\mu<\lambda}\tilde\beta_{\lambda\mu}\Phi_1(e_{\mu'}), \]
and since
\[ \langle q^{n(\lambda)}u_\lambda,q^{n(\mu)}u_\mu\rangle = \delta_{\lambda\mu}b_\lambda(q^{-1})^{-1}, \]
the result follows from our characterisation of the Hall-Littlewood symmetric functions.
\end{proof}

As promised, we can now deduce the formulae
\begin{Cor}
\begin{gather*}
e_r = P_{(1^r)}(t), \quad c_r(t) = (1-t)P_{(r)}(t), \quad h_r = \sum_{|\lambda|=r}t^{n(\lambda)}P_\lambda(t),\\
p_r = \sum_{|\lambda|=r}(1-t^{-1})(1-t^{-2})\cdots(1-t^{1-\ell(\lambda)})t^{n(\lambda)}P_\lambda(t).
\end{gather*}
\end{Cor}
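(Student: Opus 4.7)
The plan is to exploit the injectivity of $\Phi_1$: it suffices to verify each identity after applying $\Phi_1$, using the computations of $\Phi_1(e_r)$, $\Phi_1(c_r(t))$, $\Phi_1(h_r)$, $\Phi_1(p_r)$, and $\Phi_1(P_\lambda(t)) = q^{n(\lambda)}u_\lambda$ already established. Recall that $t$ is sent to $q^{-1}$, so any scalar depending on $t$ on the symmetric function side translates into the corresponding scalar in $q^{-1}$ on the Hall algebra side.

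First, for $e_r = P_{(1^r)}(t)$, compute $n((1^r)) = \sum_i (i-1) = \binom{r}{2}$, so the right-hand side maps to $q^{\binom{r}{2}} u_{(1^r)}$, which is exactly $\Phi_1(e_r)$. Second, for $c_r(t) = (1-t)P_{(r)}(t)$, note $n((r)) = 0$, so the right-hand side maps to $(1-q^{-1}) u_{(r)}$, matching $\Phi_1(c_r(t))$. Third, for $h_r = \sum_{|\lambda|=r} t^{n(\lambda)} P_\lambda(t)$, each term on the right maps to $q^{-n(\lambda)} \cdot q^{n(\lambda)} u_\lambda = u_\lambda$, so the sum becomes $\sum_{|\lambda|=r} u_\lambda = \Phi_1(h_r)$.

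For the power sum identity, substitute $t = q^{-1}$ in the product $(1-t^{-1})(1-t^{-2})\cdots(1-t^{1-\ell(\lambda)}) t^{n(\lambda)}$ to obtain $(1-q)(1-q^2)\cdots(1-q^{\ell(\lambda)-1}) q^{-n(\lambda)}$. After multiplication by $\Phi_1(P_\lambda(t)) = q^{n(\lambda)} u_\lambda$, the factors $q^{-n(\lambda)}$ and $q^{n(\lambda)}$ cancel, leaving exactly the expression for $\Phi_1(p_r)$ given earlier.

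There is no genuine obstacle here; all four identities reduce to bookkeeping in $H_1$ once the key formula $\Phi_1(P_\lambda(t)) = q^{n(\lambda)}u_\lambda$ from Proposition~\ref{HL-fns} is in hand. The only small subtlety is to make sure that the substitution $t\mapsto q^{-1}$ is carried out correctly in the power-sum coefficients, where the exponents are negative; but this is immediate from $t^{-j} = q^j$.
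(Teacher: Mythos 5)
Your proof is correct and follows exactly the argument the paper intends: since $\Phi_1$ is injective, each identity is equivalent to its image under $\Phi_1$, and these images are precisely the formulae for $\Phi_1(e_r)$, $\Phi_1(c_r(t))$, $\Phi_1(h_r)$, $\Phi_1(p_r)$ established earlier together with Proposition~\ref{HL-fns}. Your bookkeeping of the scalars (in particular the $q^{\pm n(\lambda)}$ cancellation and the negative powers $t^{-j}\mapsto q^j$ in the power-sum case) is accurate.
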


\subsection{Integral Bases}

As for the ring of symmetric functions, we can also consider an integral version of the Ringel-Hall algebra. Since all the Hall polynomials have integer coefficients, we can consider the subring ${}_{\mathbb Z[q,q^{-1}]}H_1$ with $\mathbb Z[q,q^{-1}]$-basis the $u_\lambda$. Using Hall's Theorem, we deduce that $\Phi_1$ restricts to an monomorphism
\[ \Phi_1\colon {}_{\mathbb Z}\Lambda[t] \to {}_{\mathbb Z[q,q^{-1}]}H_1, \quad t\mapsto q^{-1}, \quad P_\lambda(t)\mapsto q^{n(\lambda)}u_\lambda, \]
and this induces an isomorphism with the ring ${}_{\mathbb Z}\Lambda[t,t^{-1}]$.

It follows that ${}_{\mathbb Z[q,q^{-1}]}H_1$ is generated either by the images of the elementary symmetric functions or by the complete symmetric functions
\[ \Phi_1(e_r)=q^{\binom{r}{2}}u_{(1^r)}, \quad \Phi_1(h_r)=\sum_{|\lambda|=r|}u_\lambda. \]

A very important basis is the canonical basis. This was introduced by Lusztig in \cite{Lusztig2}, and to define it we first need to introduce the bar involution. In Lusztig's geometric construction of the Ringel-Hall algebra he showed that it is natural to consider a weighted basis
\[ \tilde u_M := v^{\dim\End(M)-\dim M}u_\lambda. \]
This basis is said to be of PBW-type since it `lifts' the Poincar\'e-Birkhoff-Witt (PBW) basis for the universal enveloping algebra of the associated semisimple Lie algebra in the Dynkin case.

The bar involution is then defined via
\[ \overline v=v^{-1} \quad\textrm{and}\quad \overline{\tilde u_S^{\phantom M}}=\tilde u_S \quad\textrm{for all semisimple modules }S. \]
This defines a ring isomorphism and moreover
\[ \overline{\tilde u_M^{\phantom M}} = \tilde u_M + \sum_{M\leq_d N}\alpha_{MN}\tilde u_N, \]
where $\leq_d$ is the degeneration order on modules (see for example \cite{Bongartz}). Let $A$ be the matrix describing the transition from the $\tilde u_M$ to the $\overline{\tilde u_M^{\phantom M}}$ for modules of a fixed dimension. Then $A$ is upper-triangular with ones on the diagonal, and since the bar involution has order two, we must have $\overline AA=\mathrm{Id}$.

It follows that there is a unique upper-triangular matrix $B$ with ones on the diagonal and entries $\beta_{MN}(q^{-1})$ above the diagonal satisfying $\beta_{MN}(t)\in t\mathbb Z[t]$ and $B=\overline BA$. If we set
\[ b_M := \tilde u_M + \sum_{M<_d N}\beta_{MN}(q^{-1})\tilde u_N, \]
then the $b_M$ are bar invariant, $\overline{b_M}=b_M$. This is called the canonical basis, and is uniquely characterised by the two properties
\begin{enumerate}
\item[(a)] $b_M = \tilde u_M+\sum_{M<_dN}\beta_{MN}(q^{-1})\tilde u_N$ with $\beta_{MN}(t)\in t\mathbb Z[t]$
\item[(b)] $\overline{b_M}=b_M$.
\end{enumerate}

For the cyclic quiver $C_1$ we have $\dim\End(M_\lambda)-\dim M_\lambda=2n(\lambda)$ and the basis of PBW-type is given by $\tilde u_\lambda=q^{n(\lambda)}u_\lambda=\Phi_1(P_\lambda(t))$, so by the images of the Hall-Littlewood functions. Moreover, the degeneration order coincides with the opposite of the dominance order of partitions, and as in the proof of Proposition \ref{HL-fns} we have
\[ \Phi_1(e_{\lambda'}) = \tilde u_\lambda + \sum_{\mu<\lambda}q^{n(\lambda)-n(\mu)}f_\lambda^\mu\tilde u_\mu \quad\textrm{and}\quad \tilde u_\lambda = \Phi_1(e_{\lambda'}) + \sum_{\mu<\lambda}\tilde\beta_{\lambda\mu}\Phi_1(e_{\mu'}). \]
Since the bar involution fixes the semisimples $\tilde u_{(1^r)}=q^{\binom{r}{2}}u_{(1^r)}=\Phi_1(e_r)$ we have
\[ \overline{\tilde u_\lambda^{\phantom M}} = \tilde u_\lambda + \sum_{\mu<\lambda}\gamma_{\lambda\mu}\tilde u_\mu \]
as required.

We can therefore construct the canonical basis $b_\lambda$. This was done in \cite{Lusztig1} (see also \cite{Schiffmann4}).

\begin{Thm}
The canonical basis is given by the images of the Schur functions
\[ b_\lambda=\Phi_1(s_\lambda). \]
\end{Thm}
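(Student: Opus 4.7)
The plan is to verify directly that the elements $\Phi_1(s_\lambda)$ satisfy the two characterising properties (a) and (b) of the canonical basis, and then invoke uniqueness.

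First, I would use the expansion of the Schur functions in terms of the Hall-Littlewood functions, namely
\[ s_\lambda = P_\lambda(t) + \sum_{\mu<\lambda} K_{\lambda\mu}(t) P_\mu(t), \quad K_{\lambda\mu}(t)\in t\mathbb Z[t], \]
which was stated in the section on Hall-Littlewood functions. Applying $\Phi_1$ and using $\Phi_1(P_\mu(t))=\tilde u_\mu$ from Proposition \ref{HL-fns}, this gives
\[ \Phi_1(s_\lambda) = \tilde u_\lambda + \sum_{\mu<\lambda} K_{\lambda\mu}(q^{-1}) \tilde u_\mu. \]
Since the degeneration order on modules over $C_1$ is the opposite of the dominance order, the condition $\mu<\lambda$ (in dominance order) is precisely $M_\lambda<_d M_\mu$. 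Combined with $K_{\lambda\mu}(t)\in t\mathbb Z[t]$, this verifies property (a) with $\beta_{\lambda\mu}(t)=K_{\lambda\mu}(t)$.

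For property (b), I would observe that $s_\lambda$ lies in ${}_{\mathbb Z}\Lambda$, hence can be written as an integer polynomial in the elementary symmetric functions $e_r$. Since $n((1^r))=\binom{r}{2}$, we have $\Phi_1(e_r)=q^{\binom{r}{2}}u_{(1^r)} = \tilde u_{(1^r)}$, so $\Phi_1(s_\lambda)$ is an integer (i.e.\ $v$-free) polynomial in the elements $\tilde u_{(1^r)}$. The bar involution is a ring automorphism fixing $\mathbb Z$ and, by definition, fixing each $\tilde u_S$ for $S$ semisimple; in particular it fixes each $\tilde u_{(1^r)}$. Therefore $\overline{\Phi_1(s_\lambda)}=\Phi_1(s_\lambda)$, giving property (b).

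By the uniqueness of the element satisfying (a) and (b) (which was the content of the preceding discussion justifying the well-definedness of $b_\lambda$), we conclude $b_\lambda=\Phi_1(s_\lambda)$. I do not expect any serious obstacle here: the combinatorial input (that the Kostka-Foulkes polynomials lie in $t\mathbb Z[t]$, and the identification of the degeneration order with the opposite dominance order) is taken from the earlier sections, and bar-invariance reduces immediately to bar-invariance of the semisimple generators. The one point worth double-checking is the direction of the order, i.e.\ that condition (a) in the definition of the canonical basis and the triangularity of $s_\lambda$ in the Hall-Littlewood basis are consistent; this is precisely where the remark that $\leq_d$ is the opposite of dominance is used.
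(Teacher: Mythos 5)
Your proof is correct and follows essentially the same approach as the paper: verifying property (a) via the expansion $s_\lambda = P_\lambda(t)+\sum_{\mu<\lambda}K_{\lambda\mu}(t)P_\mu(t)$ together with $\Phi_1(P_\mu(t))=\tilde u_\mu$, and verifying property (b) by writing $s_\lambda$ as an integer polynomial in the $e_r$ and using that $\Phi_1(e_r)=\tilde u_{(1^r)}$ is bar-invariant. Your extra care about the direction of the degeneration order is worth having, but there is no substantive difference from the paper's argument.
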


\begin{proof}
Set $b_\lambda:=\Phi_1(s_\lambda)$. We need to check that the $b_\lambda$ satisfy the two properties given above.

Since
\[ s_\lambda = P_\lambda(t) + \sum_{\mu<\lambda}K_{\lambda\mu}(t)P_\mu(t), \quad K_{\lambda\mu}(t)\in t\mathbb Z[t], \]
we see that
\[ b_\lambda = \tilde u_\lambda + \sum_{\mu<\lambda}K_{\lambda\mu}(q^{-1})\tilde u_\mu, \quad K_{\lambda\mu}(t)\in t\mathbb Z[t]. \]

Also, since the $\Phi_1(e_\lambda)$ are bar invariant and
\[ s_\lambda = e_{\lambda'} + \sum_{\mu<\lambda}\beta_{\lambda\mu}e_{\mu'}, \quad \beta_{\lambda\mu}\in\mathbb Z, \]
we see that the $b_\lambda$ are also bar invariant.
\end{proof}

Using the bilinear form we may also define the dual canonical basis $b_\lambda^\ast$. It immediately follows that
\[ b_\lambda^\ast = \Phi_1(S_\lambda(t)). \]

For reference we compute the first few canonical basis elements, using our earlier description of the Schur functions.
\begin{align*}
b_{(1)} &= u_{(1)} = \tilde u_{(1)}\\
b_{(1^2)} &= qu_{(1^2)} = \tilde u_{(1^2)}\\
b_{(2)} &= u_{(2)}+u_{(1^2)} = \tilde u_{(2)} + q^{-1}\tilde u_{(1^2)}\\
b_{(1^3)} &= q^3u_{(1^3)} = \tilde u_{(1^3)}\\
b_{(12)} &= qu_{(12)}+q(q+1)u_{(1^3)} = \tilde u_{(12)}+q^{-1}(1+q^{-1})\tilde u_{(1^3)}\\
b_{(3)} &= u_{(3)}+u_{(12)}+u_{(1^3)} = \tilde u_{(3)}+q^{-1}\tilde u_{(12)}+q^{-3}\tilde u_{(13)}\\
b_{(1^4)} &= q^6u_{(1^4)} = \tilde u_{(1^4)}\\
b_{(1^22)} &= q^3u_{(1^22)}+q^3(q^2+q+1)u_{(1^4)} = \tilde u_{(1^22)}+q^{-1}(1+q^{-1}+q^{-2})\tilde u_{(1^4)}\\
b_{(2^2)} &= q^2u_{(2^2)}+u_{(1^22)}+(q^4+q^2)u_{(1^4)} = \tilde u_{(2^2)}+q^{-3}\tilde u_{(1^22)}+q^{-2}(1+q^{-2})\tilde u_{(1^4)}\\
b_{(13)} &= qu_{(13)}+qu_{(2^2)}+q(q+1)u_{(1^22)}+(q^3+q^2+q)u_{(1^4)}\\
&= \tilde u_{(13)}+q^{-1}\tilde u_{(2^2)}+q^{-1}(1+q^{-1})\tilde u_{(1^22)}+q^{-3}(1+q^{-1}+q^{-2})\tilde u_{(1^4)}\\
b_{(4)} &= u_{(4)}+u_{(13)}+u_{(2^2)}+u_{(1^22)}+u_{(1^4)}\\
&= \tilde u_{(4)}+q^{-1}\tilde u_{(13)}+q^{-2}\tilde u_{(2^2)}+q^{-3}\tilde u_{(1^22)}+q^{-6}\tilde u_{(1^4)}
\end{align*}

\section{Cyclic Quivers, II}

We now generalise our discussion to larger cyclic quivers. In this case, there is a natural monomorphism from the ring of symmetric functions to the centre of the Ringel-Hall algebra, Theorem \ref{centre}.

Let $C_n$ be the cyclic quiver with vertices $1,2,\ldots,n$ and arrows $a_i\colon i\to i-1$ (taken modulo $n$).
\begin{center}
\newbox\ASYbox
\newdimen\ASYdimen
\def\ASYbase#1#2{\setbox\ASYbox=\hbox{#1}\ASYdimen=\ht\ASYbox%
\setbox\ASYbox=\hbox{#2}\lower\ASYdimen\box\ASYbox}
\def\ASYalign(#1,#2)(#3,#4)#5#6{\leavevmode%
\setbox\ASYbox=\hbox{#6}%
\setbox\ASYbox\hbox{\ASYdimen=\ht\ASYbox%
\advance\ASYdimen by\dp\ASYbox\kern#3\wd\ASYbox\raise#4\ASYdimen\box\ASYbox}%
\put(#1,#2){%
\wd\ASYbox 0pt\dp\ASYbox 0pt\ht\ASYbox 0pt%
\box\ASYbox%
}}
\def\ASYraw#1{#1}
\setlength{\unitlength}{1pt}
\includegraphics{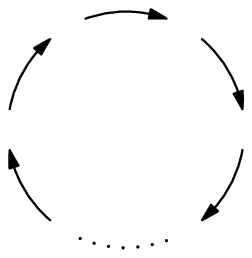}%
\definecolor{ASYcolor}{gray}{0.000000}\color{ASYcolor}
\fontsize{9.962640}{11.955168}\selectfont
\usefont{OT1}{cmr}{m}{n}%
\ASYalign(-112.412074,34.645115)(-0.500000,-0.320001){1.000000 0.000000 0.000000 1.000000}{$C_n\colon$}
\ASYalign(-78.268834,34.645115)(-0.500000,-0.500000){1.000000 0.000000 0.000000 1.000000}{$4$}
\ASYalign(-61.197214,64.213161)(-0.500000,-0.500000){1.000000 0.000000 0.000000 1.000000}{$3$}
\ASYalign(-27.053975,64.213161)(-0.500000,-0.500000){1.000000 0.000000 0.000000 1.000000}{$2$}
\ASYalign(-9.982355,34.645115)(-0.500000,-0.500000){1.000000 0.000000 0.000000 1.000000}{$1$}
\ASYalign(-27.053975,5.077069)(-0.500000,-0.500000){1.000000 0.000000 0.000000 1.000000}{$n$}
\ASYalign(-5.687135,12.452009)(-0.500000,-0.241629){1.000000 0.000000 0.000000 1.000000}{$a_1$}
\ASYalign(-5.687135,56.838221)(-0.500000,-0.241629){1.000000 0.000000 0.000000 1.000000}{$a_2$}
\ASYalign(-44.125594,79.031327)(-0.500000,-0.241629){1.000000 0.000000 0.000000 1.000000}{$a_3$}
\ASYalign(-82.564054,56.838221)(-0.500000,-0.241629){1.000000 0.000000 0.000000 1.000000}{$a_4$}
\end{center}
Define $\Cat_n(k)$ to be the category of $k$-representations of $C_n$, i.e. the category of functors from $C_n$ to finite dimensional $k$-vector spaces. Thus a representation $M$ is given by finite dimensional vector spaces $M(i)$ for each vertex $1\leq i\leq n$ and linear maps $M(a_i)\colon M(i)\to M(i-1)$ for each arrow $a_i$.

We can also view this as the category of finite dimensional modules over an hereditary order. Let $P=k[T^n]$ and set
\[ A_n:=\begin{pmatrix}
       P &       TP & T^2P & \cdots & T^{n-1}P\\
T^{n-1}P &        P &   TP & \cdots & T^{n-2}P\\
T^{n-2}P & T^{n-1}P &    P & \cdots & T^{n-3}P\\
\hdotsfor{5}\\
      TP &     T^2P & T^3P & \cdots & P
\end{pmatrix} \subset \mathbb M_n(P).\]
Thus, for $i\leq j$, we have the $P$-module $T^{j-i}k[T^n]$ in position $(i,j)$, whereas for $i>j$ we have $T^{n+j-i}k[T^n]$. The identification with $\Cat_n(k)$ is given as follows. Let $E_{ij}\in\mathbb M_n(P)$ be the standard basis. If $M$ is an $A_n$-module, then $M(i):=E_{ii}M$ and $M(a_i)$ is induced by the action of $TE_{i-1i}$. (See for example \cite{Geigle-Lenzing}.)

As before, define $\Cat_n^0(k)$ to be the full subcategory of nilpotent objects. These are functors $M$ such that the linear map
\[ M(a_1a_2\cdots a_n) := M(a_1)M(a_2)\cdots M(a_n)\in\End(M(n)) \]
acts nilpotently. Equivalently, these are those $R_n$-modules for which $T^n$ acts nilpotently.

The category $\Cat_n^0(k)$ has simple objects $M_i$ for $1\leq i\leq n$ where $M_i(j)=k^{\delta_{ij}}$ and $M_i(a_j)=0$. Moreover, this category is again a uniserial length category. We can index the isomorphism classes of indecomposable modules by pairs $(i;l)$ for $1\leq i\leq n$ and $l\geq1$, where the indecomposable $M_{(i;l)}$ has simple socle $M_i$ and length (or dimension) $l$. We therefore identify $i$ with $(i;1)$.

Given a partition $\lambda$ we set $M_{(i;\lambda)}:=\bigoplus_jM_{(i;\lambda_j)}$. More generally, given a multi-partition $\blambda=(\lambda^1,\ldots,\lambda^n)$ we set $M_{\blambda}:=\bigoplus_iM_{(i;\lambda^i)}$. This yields a bijection between the set of isomorphism classes and the set of multi-partitions. In particular, this set is combinatorial, so independent of the field $k$.

The category $\Cat_n^0(k)$ again satisfies our conditions, hence for each finite field $k$ we can define the Ringel-Hall algebra $H_n(k):=H(\Cat_n^0(k))$. We note that $K_0\cong\mathbb Z^n$ via $M_i\mapsto e_i$. Also, the radical of $(-,-)$ is generated by $\delta:=\sum_ie_i$. Thus $\overline K_0 = \mathbb Z^n/(\delta)\cong\mathbb Z^{n-1}$.

\subsection{Hall Polynomials}\label{HP2}

We have the following generalisation of Theorem \ref{Thm1}, due to Guo \cite{Guo}.
\begin{Thm}[Guo]
There exist
\begin{enumerate}
\item integers $d(\blambda,\bmu)$ such that, over any field $k$, $d(\blambda,\bmu):=\dim\Hom(M_{\blambda},M_{\bmu})$. In fact,
\[ \dim\Hom(M_{(i;l)},M_{(j;m)}) = |\{\max\{0,l-m\}\leq r<l:r\equiv j-i\bmod n\}|. \]
\item monic integer polynomials $a_{\blambda}$ such that, over any finite field $k$,
\[ a_{\blambda}(|k|) = a_{M_{\blambda}} = |\Aut(M_{\blambda})|. \]
\item integer polynomials $F_{\blambda\bmu}^{\bxi}$ such that, over any finite field $k$,
\[ F_{\blambda\bmu}^{\bxi}(|k|) = F_{M_{\blambda}M_{\bmu}}^{M_{\bxi}}. \]
Moreover,
\[ 2\deg F_{\blambda\bmu}^{\bxi}\leq h_{\bxi\bxi}-h_{\blambda\blambda}-h_{\bmu\bmu}. \]
\end{enumerate}
\end{Thm}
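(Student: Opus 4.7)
The plan is to mirror the structure of Theorem \ref{Thm1}, proving parts (1) and (2) by direct computation in the uniserial length category $\Cat_n^0(k)$, and tackling part (3) by adapting the iterative approach sketched in Section \ref{HallPolys} (due to Schiffmann and Zelevinsky) to the cyclic setting.

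For part (1), a homomorphism between uniserial indecomposables $\phi\colon M_{(i;l)}\to M_{(j;m)}$ is determined by its image, which is simultaneously a quotient of the source and a submodule of the target. The length-$k$ quotients of $M_{(i;l)}$ are isomorphic to $M_{(i+l-k;k)}$, and the length-$k$ submodules of $M_{(j;m)}$ are isomorphic to $M_{(j;k)}$; matching these and substituting $r=l-k$ yields the stated congruence $r\equiv j-i\pmod n$ with range $\max\{0,l-m\}\le r<l$. Additivity in direct summands then gives $d(\blambda,\bmu)$ as a combinatorial quantity independent of $k$. For part (2), Fitting's lemma identifies the semisimple quotient of $\End(M_\blambda)$ as $\prod_{(i;l)}\mathrm{Mat}_{m_{(i;l)}(\blambda)}(k)$, so
\[
|\Aut(M_\blambda)|=q^{\dim_k J}\prod_{(i;l)}|GL_{m_{(i;l)}(\blambda)}(\mathbb F_q)|,
\]
where $\dim_k J=d(\blambda,\blambda)-\sum_{(i;l)}m_{(i;l)}(\blambda)^2$ is independent of $k$. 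Assembling these factors using the Gauss-polynomial expression for $|GL|$ gives a monic integer polynomial $a_\blambda(T)$.

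For part (3), the first subtask is the cyclic analogue of Example (4) from Section \ref{HallPolys}: polynomiality of Hall numbers $F^{\bxi}_{\blambda,\bmu}$ when $M_\bmu=\bigoplus_iM_i^{d_i}$ is semisimple. Via Riedtmann's formula, this reduces to enumerating semisimple submodules of $M_\bxi$ whose quotient is isomorphic to $M_\blambda$; such submodules correspond, vertex by vertex, to $d_i$-dimensional subspaces of $\mathrm{soc}_i(M_\bxi)$ meeting the various summands in prescribed ways, and the enumeration decomposes into a product of Gauss polynomials in $q$. The second subtask is a triangularity statement: with respect to a suitable refinement of the dominance order on multi-partitions, each $u_\blambda$ lies in the $\mathbb Z[q,q^{-1}]$-span of monomials in the semisimple basis elements $u_{S_i^k}$ (where $S_i^k:=M_i^k$), with $u_\blambda$ itself appearing with an invertible coefficient. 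Inverting this relation expresses each $u_\blambda$ as a polynomial combination of such monomials, whence all $F^\bxi_{\blambda\bmu}$ are polynomial in $q$.

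For the degree bound, once polynomiality is in hand I form the generic Ringel-Hall algebra $H_n$ and pass to the PBW-type basis $\tilde u_\blambda:=v^{h_{\blambda\blambda}-\dim M_\blambda}u_\blambda$ (cf.\ the $n=1$ rescaling $\tilde u_\lambda=q^{n(\lambda)}u_\lambda$); the structure constants in this rescaled basis lie in $\mathbb Z[v^{\pm 1}]$, and the claimed bound $2\deg F^\bxi_{\blambda\bmu}\le h_{\bxi\bxi}-h_{\blambda\blambda}-h_{\bmu\bmu}$ emerges directly from the rescaling exponents. \emph{The main obstacle} is the triangularity step: for multi-partitions one must refine the dominance order to respect both the cyclic vertex-grading and the analogue of the constraint $\blambda\cup\bmu\le\bxi\le\blambda+\bmu$, which now couples different vertices via the $r\equiv j-i\pmod n$ congruence from part (1). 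Verifying that iterated multiplication by $u_{S_i^k}$ remains triangular with polynomial-in-$q$ entries, and identifying the correct leading term to ensure invertibility of the transition matrix, is where the real technical work lies.
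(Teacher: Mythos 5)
Your parts (1) and (2) are correct and match the standard computations in a uniserial length category (image determination for $\Hom$, Fitting's lemma for $\Aut$). For part (3) your existence argument — first treating the case where one of the factors is semisimple via counting semisimple submodules, then invoking triangularity against the basis of PBW type so that the semisimples generate the algebra over $\mathbb Z[q,q^{-1}]$ — is essentially the approach the paper attributes to Guo \cite{Guo} (the paper has the semisimple in the first slot, you in the second; both work, since the Hall algebra is generated by the semisimples on either side). You also correctly identify the refinement-of-dominance-order triangularity as the real technical work, which the paper likewise defers to Guo, and to \cite{Hubery3}, rather than carry out.

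However, your treatment of the degree bound is a genuine gap. You claim the bound $2\deg F^{\bxi}_{\blambda\bmu}\le h_{\bxi\bxi}-h_{\blambda\blambda}-h_{\bmu\bmu}$ ``emerges directly from the rescaling exponents'' of the PBW-type basis $\tilde u_{\blambda}=v^{h_{\blambda\blambda}-|\blambda|}u_{\blambda}$, but the observation that the rescaled structure constants lie in $\mathbb Z[v^{\pm1}]$ says nothing about their degree: that is already automatic from polynomiality. What one actually needs to show is that the rescaled structure constants lie in $\mathbb Z[v^{-1}]$, and that is precisely the content of the degree bound, so the argument as stated is circular. In the $n=1$ analogue the paper obtains this bound only after Hall's Theorem is available, via the identity $P_\lambda(t)P_\mu(t)=\sum_\xi t^{n(\xi)-n(\lambda)-n(\mu)}F^\xi_{\lambda\mu}(t^{-1})P_\xi(t)$ and the integrality of the Hall--Littlewood structure constants; for $n>1$ no such identity is yet available at this stage, and the paper explicitly attributes the bound to the different method of \cite{Hubery3} (Green's formula, reduction to $M_{\bxi}$ indecomposable), not to any rescaling. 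You would either need to reproduce that argument or supply an independent reason the rescaled coefficients have no positive powers of $v$.
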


Guo's proof follows the approach outlined before by first showing that there exist polynomials $F_{\blambda\bmu}^{\bxi}$ whenever $M_{\blambda}$ is semisimple. Then, since the semisimple modules generate the Ringel-Hall algebra, we can complete the proof of existence of Hall polynomials in the same way.

Alternatively, the method given in \cite{Hubery3} works for all cyclic quivers, so we can again easily reduce to the case when $M_{\bxi}$ is indecomposable, where the result is trivial. Moreover, this method also allows one to obtain the upper bound for the degree of the Hall polynomials.

\subsection{Generalising Hall's Theorem}

As before, we can define the generic Ringel-Hall algebra $H_n$ over $\bQ(v)$. This has basis $u_{\blambda}K_\alpha$ with $\alpha\in\mathbb Z^n/(\delta)$, and we use the polynomials $F_{\blambda\bmu}^{\bxi}(v^2)$ instead of the integers $F_{M_{\blambda}M_{\bmu}}^{M_{\bxi}}$.

The analogue of Theorem \ref{Hall} says that there is a natural map from $\Lambda[t]$ to the centre $Z_n$ of $H_n$, and that $H_n$ is the tensor product of $Z_n$ with the composition algebra $C_n$. This is the subalgebra of $H_n$ generated by elements of the form $u_{\blambda}K_\alpha$ such that $\dim\Ext^1(M_{\blambda},M_{\blambda})=0$, or equivalently $\langle M_{\blambda},M_{\blambda}\rangle=h_{\blambda\blambda}$. If $n>1$, then $C_n$ is generated by $u_iK_\alpha$ for $1\leq i\leq n$. If $n=1$, then $C_1=\bQ(v)$.

\begin{Thm}[Schiffmann \cite{Schiffmann1}, Hubery \cite{Hubery1}]\label{centre}
We have
\begin{enumerate}
\item $H_n\cong Z_n\otimes C_n$ as self-dual graded Hopf algebras.
\item $Z_n=\bQ(v)[x_1,x_2,\ldots]$, where
\[ x_r := (-v^{-2})^{rn}\sum_{\substack{\blambda:[M_{\blambda}]=r\delta\\\mathrm{soc}(M_{\blambda})\textrm{ square-free}}} (-1)^{h_{\blambda\blambda}}a_{\blambda}(v^2)u_{\blambda}. \]
\item there is a natural monomorphism of self-dual graded Hopf algebras
\[ \Phi_n \colon \Lambda[t] \to Z_n, \quad t\mapsto v^{-2n}, \quad c_r\mapsto x_r. \]
\end{enumerate}
\end{Thm}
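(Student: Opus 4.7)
The natural strategy is to prove statement (c) first (identifying $\Phi_n$ as a Hopf algebra monomorphism into the centre), from which (b) will follow by comparing generators, and (a) will follow by a graded dimension count. Throughout, one exploits Lemma \ref{gen-fns} to translate between the multiplicative ``cyclic'' data $c_r(t)$ and the additive ``power sum'' data $p_r$, and dually between $x_r$ and its logarithmic siblings.

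The crucial first step is to verify the comultiplication identity $\Delta(x_r)=\sum_{a+b=r}x_a\otimes x_b$ in $H_n$. Writing out Green's formula on each $u_{\blambda}$ appearing in the defining sum for $x_r$, one gets contributions indexed by short exact sequences $0\to M_{\boldsymbol\nu}\to M_{\blambda}\to M_{\bmu}\to 0$. The point of restricting the sum to multi-partitions with square-free socle, with the weighting $(-1)^{h_{\blambda\blambda}}a_{\blambda}(v^2)$, is to produce a sign-reversing involution on pairs $(M_{\bmu},M_{\boldsymbol\nu})$ which cancels every term where either factor contains an indecomposable summand of length $\geq 2$ whose simple top coincides with the socle of a summand of the other factor. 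What survives are exactly the split-exact contributions, giving $\sum x_a\otimes x_b$. Since $\deg x_r=r\delta$ lies in the radical of $(-,-)$, the $K$-twist in the comultiplication of $H_n$ disappears and this primitivity has the clean form predicted by Lemma \ref{gen-fns} applied to $X(T):=1+\sum_r x_rT^r$.

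With primitivity in hand, centrality is immediate: the commutation $K_\alpha u_X=v^{(\alpha,[X])}u_XK_\alpha$ simplifies to $K_\alpha x_r=x_rK_\alpha$ since $(\alpha,r\delta)=0$, and a short argument using Green's formula together with non-degeneracy of the Hopf pairing shows $[x_r,u_{\bxi}]=0$ for every $\bxi$. Algebraic independence of the $x_r$ follows by passing through the primitive elements $y_r$ supplied by Lemma \ref{gen-fns}: different-degree primitives are orthogonal, and a direct computation of $\langle x_r,x_r\rangle$, concentrated on the summand where $M_{\blambda}$ is indecomposable of length $rn$ (whose Hall number can be read off via Riedtmann's formula), gives a non-zero value matching $\langle c_r(t),c_r(t)\rangle_t=1-t$ under $t\mapsto v^{-2n}$. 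At this point, Lemma \ref{gen-fns} applied on both sides shows that $c_r\mapsto x_r$, $t\mapsto v^{-2n}$ extends to a Hopf algebra map $\Phi_n\colon\Lambda[t]\to Z_n$, which is an isometry and hence injective, establishing (c) and (b).

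For (a), I would check that multiplication induces an isomorphism $Z_n\otimes C_n\xrightarrow{\sim}H_n$ of self-dual graded Hopf algebras. Since $C_n$ is generated by the simples $u_iK_\alpha$ with dimension vectors $e_i$ away from $\mathbb Z\delta$, and $Z_n$ is supported on dimension vectors in $\mathbb Z\delta$, their graded pieces pair compatibly; Green's formula gives the compatibility of the coproduct, and a comparison of graded dimensions reduces to the bijection between multi-partitions of dimension $(r\delta+\boldsymbol\mu)$ and pairs (partition of $r$, multi-partition of real content $\boldsymbol\mu$). The main obstacle is unquestionably the combinatorial identity underlying the first step: the sign-reversing involution on square-free-socle pairs is conceptually clean in the Sevenhant--Van den Bergh Borcherds-quantum-group picture, but establishing it rigorously requires careful book-keeping of extensions in $\Cat_n^0(k)$ and the precise form of $a_{\blambda}(v^2)$ across the sum.
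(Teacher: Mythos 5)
Your proposal takes a genuinely different route from the one in the paper. The paper follows Sevenhant--Van den Bergh: rather than beginning with the explicit formula for $x_r$, it first establishes abstractly that $H_n$ factors as $\overline Z\otimes C$, where $\overline Z$ is a central Hopf subalgebra generated by homogeneous primitives in degrees $r\delta$ and $C$ is the composition algebra. The key device is specialisation at finite fields, which makes the Hopf pairing positive-definite, so one can form orthogonal complements $H'_\alpha := \bigl(\sum_{\beta+\gamma=\alpha,\,\beta,\gamma>0}H_\beta H_\gamma\bigr)^\perp$, pick an orthogonal generating set $\{\theta_j\}$, show each $\theta_j$ is primitive with $(\alpha_i,\alpha_j)\le0$, and derive centrality of the $\theta_j$ with $\alpha_j\in\mathbb Z\delta$ from
\[ \langle\theta_i\theta_j-\theta_j\theta_i,\theta_i\theta_j-\theta_j\theta_i\rangle=2\bigl(1-v^{(\alpha_i,\alpha_j)}\bigr)\langle\theta_i,\theta_i\rangle\langle\theta_j,\theta_j\rangle. \]
Only at the end is the explicit $x_r$ matched against $\overline Z=\bigcap_i\ker(e_i')$. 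Your head-on computational route is closer in spirit to Schiffmann's and Hubery's original treatments; the paper's stated motivation for the Sevenhant--Van den Bergh path is precisely to \emph{avoid} the combinatorics you propose.

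Your sketch also has concrete gaps. The sign-reversing involution giving $\Delta(x_r)=\sum_{a+b=r}x_a\otimes x_b$ is never constructed, and it is the whole technical content of your first step (also, the formula you apply to each $u_{\blambda}$ is the comultiplication formula for $u_X$, not Green's formula, which is the compatibility of $\Delta$ with the multiplication). Centrality of $x_r$ is not immediate from the coproduct identity: for $n\geq2$ the algebra $H_n$ is neither commutative nor cocommutative, so one cannot argue $\langle x_r y,w\rangle=\langle y x_r,w\rangle$ directly, and the ``short argument'' you invoke is unspecified; the paper obtains centrality from the positive-definiteness identity above, not from primitivity alone. The evaluation of $\langle x_r,x_r\rangle$ is not concentrated on the indecomposable summand: already for $n=2$, $r=1$, all three terms contribute, $\langle x_1,x_1\rangle=v^{-4}\bigl(a_{(20)}(v^2)+a_{(02)}(v^2)+a_{(11)}(v^2)\bigr)=1-v^{-4}$, and no single term gives the answer. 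Finally, for part (a) a graded-dimension count is not sufficient: the paper proves injectivity of the multiplication map $Z_n\otimes C_n\to H_n$ via the orthogonality $\langle xy,x'y'\rangle=\langle x,x'\rangle\langle y,y'\rangle$ for $x,x'\in\overline Z$, $y,y'\in C$, whereas a dimension count would require knowing the graded dimensions of $Z_n$ and $C_n$ independently of the very decomposition being established.
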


Again, $\Phi_n$ induces an isomorphism with $\Lambda[t]\otimes\bQ(t^{1/2n})$. Note also that $\mathrm{soc}(M_{\blambda})$ is square-free if and only if $\blambda=(m_1,\ldots,m_n)$ for some integers $m_i$. 

We remark that for $n=1$
\[ H_1=Z_1 \quad\textrm{and}\quad x_r = (1-v^{-2})u_{(r)} = \Phi_1(c_r), \]
so we recover Theorem \ref{Hall} as a special case.

Also, we should note that the definition of the $x_r$ given here differs by $\pm1$ to that originally given in \cite{Hubery1}. More precisely, the original definition had a factor of $(-1)^r$ at the front, which was chosen to ensure that the indecomposable $u_{(r,0,\ldots,0)}$ always had a positive coefficient. We now have the sign $(-1)^{rn}$, which means that the minimal elements $u_{(n,\ldots,n)}$ always have a positive coefficient. Our reasons for doing this will become clear in Conjecture \ref{Conj}.

We describe the first few such elements. Write $x_{n,r}$ for $x_r\in H_n$ and for convenience (and to save space) set $\alpha:=v^{-2}$, $\beta:=1-v^{-2}$ and abbreviate $u_{(m,n,p)}$ by $u_{(mnp)}$.
\begin{align*}
x_{1,1} &= \beta u_{(1)}\\
x_{1,2} &= \beta u_{(2)}\\
x_{2,1} &= -\alpha\beta\big(u_{(20)}+u_{(02)}\big)+\beta^2u_{(11)}\\
x_{2,2} &= \alpha^2\beta\big(u_{(40)}+u_{(04)}\big)-\alpha\beta^2\big(u_{(31)}+u_{(13)}\big)+\beta^2u_{(11)}\\
x_{3,1} &= \alpha^2\beta\big(u_{(300)}+u_{(030)}+u_{(003)}\big)-\alpha\beta^2\big(u_{(120)}+u_{(012)}+u_{(201)}\big)+\beta^3u_{(111)}\\
x_{3,2} &= \alpha^4\beta\big(u_{(600)}+u_{(060)}+u_{(006)}\big)-\alpha^3\beta^2\big(u_{(150)}+u_{(015)}+u_{(501)}\big)\\
&\quad-\alpha^3\beta^2\big(u_{(420)}+u_{(042)}+u_{(204)}\big)+\alpha^2\beta^3\big(u_{(411)}+u_{(141)}+u_{(114)}\big)\\
&\quad+\alpha^2\beta^2\big(u_{(330)}+u_{(033)}+u_{(303)}\big)-\alpha\beta^3\big(u_{(123)}+u_{(312)}+u_{(231)}\big)+\beta^3u_{(222)}
\end{align*}

It would be interesting to have expressions for the images of some of the other symmetric functions, for example $p_r$, $e_r$ and $h_r$.

\subsection{Outline of the Proof}

We will follow an approach by Sevenhant and Van den Bergh \cite{Sevenhant-VandenBergh2}, thus placing this result in a much broader context --- that of all extended Dynkin quivers.

For $n\geq2$ set $H:=H_n$ and recall that this is graded by $K_0=\mathbb Z^n$. We define a partial order on $K_0$ whereby $\alpha\geq\beta$ if and only if $\alpha_i\geq\beta_i$ for all $i$; here $\alpha=\sum_i\alpha_ie_i$ as usual. We can therefore write
\[ H = \bigoplus_{\alpha\geq0}H_\alpha \]
as a sum of its homogeneous parts. Note that
\[ H_0=\mathbb Q(v) \quad\textrm{and}\quad H_{e_i}=\mathbb Q(v)u_i. \]

We next define
\[ H'_\alpha := \Big(\sum_{\substack{\beta+\gamma=\alpha\\\beta,\gamma>0}}H_\beta H_\gamma\Big)^\perp \subset H_\alpha. \]
This definition is sensible since we can treat the symmetric bilinear form on $H$ like a positive definite form. For, the bilinear form on each $H(k)$ with $k$ a finite field is positive definite, as the $u_\lambda$ are pairwise orthogonal and each $\langle u_\lambda,u_\lambda\rangle$ is positive. Now, given any homogeneous $x\in H$, take a finite field $k$ such that $v_k=|k|^{1/2}$ is neither a zero or pole of the coefficients of $x$. Then we can specialise $x$ at $v_k$ to give a non-zero element $\tilde x$ of $H(k)$, whence $\langle\tilde x,\tilde x\rangle>0$.

We can therefore choose an orthogonal basis for each non-zero $H'_\alpha$, and if $\{\theta_j\}_{j\in J}$ is the union of these, then $H$ is generated as an algebra by the $\theta_j$.

Set $\alpha_j:=[\theta_j]\in K_0$. Then each $\theta_j$ is primitive:
\[ \Delta(\theta_j) = \theta_j\otimes1+K_{\alpha_j}\otimes\theta_j. \]
For, we can extend the $\{\theta_j\}$ to an orthogonal homogeneous basis $\{f_j\}$ for $H$. Then
\[ \Delta(\theta_j) = \theta_j\otimes1+K_{\alpha_j}\otimes\theta_j + \sum_{\substack{[f_a]+[f_b]=\alpha_j\\{}[f_a],[f_b]>0}}\gamma_{ab}f_aK_{[f_b]}\otimes f_b. \]
Now, since $[f_a],[f_b]<\alpha_j$, we have
\[ 0 = \langle\theta_j,f_af_b\rangle = \langle\Delta(\theta_j),f_a\otimes f_b\rangle = \gamma_{ab}. \]

We next show that $(\alpha_i,\alpha_j)\leq0$ for $i\neq j$. For, we have
\[ \Delta(\theta_i\theta_j) = \theta_i\theta_j\otimes1+\theta_iK_{\alpha_j}\otimes\theta_j+v^{(\alpha_i,\alpha_j)}\theta_jK_{\alpha_i}\otimes\theta_i+K_{\alpha_i+\alpha_j}\otimes\theta_i\theta_j. \]
From this it follows that
\[ \langle\theta_i\theta_j-\theta_j\theta_i,\theta_i\theta_j-\theta_j\theta_j\rangle = 2(1-v^{(\alpha_i,\alpha_j)})\langle\theta_i,\theta_i\rangle\langle\theta_j,\theta_j\rangle. \]
Specialising to an appropriate $H(k)$ for $k$ a finite field, the left hand side will be non-negative, and since $v_k>1$ we must have $(\alpha_i,\alpha_j)\leq0$ whenever $i\neq j$.

Now, we have already observed that $H_{e_i}=\mathbb Q(v)u_i$. Hence we may assume that each $u_i$ is in our set $\{\theta_j\}$. For any other $\theta_j$ we have $(\alpha_j,e_i)=0$ for all $i$, whence $\alpha_j\in\mathrm{rad}(-,-)=\mathbb Z\delta$. In this case we see from the calculation above that
\[ \langle\theta_i\theta_j-\theta_i\theta_j,\theta_i\theta_j-\theta_j\theta_i\rangle = 0 \quad \textrm{for all }\theta_i, \]
whence
\[ \theta_i\theta_j-\theta_j\theta_i=0 \]
and so each $\theta_j$ which is not of the form $u_i$ is central in the Ringel-Hall algebra.

We define
\[ C:=\mathbb Q(v)[\{u_i\}] \quad\textrm{and}\quad \overline Z:=\mathbb Q(v)[\{\theta_j\}\setminus\{u_i\}]. \]
Then $C$ is the composition subalgebra and $\overline Z$ is a central subalgebra. Both of these are graded Hopf subalgebras, since the $\theta_j$ are all homogeneous and primitive. They are also both self-dual, using that $\langle x,x\rangle=0$ implies $x=0$. Furthermore, $\overline Z$ is generated in degrees $r\delta$ for $r\in\mathbb N$.

Next we prove that there is a natural isomorphism
\[ H\cong \overline Z\otimes C. \]
For, $H$ is generated by the $\theta_j$ and $\overline Z$ is central, so the multiplication map $\overline Z\otimes C\to H$ is surjective. To see that this map is injective, we first note that if $x,x'\in\overline Z$ and $y,y'\in C$ are homogeneous, then
\[ \langle xy,x'y'\rangle = \langle x,x'\rangle\langle y,y'\rangle. \]
This follows since $\langle-,-\rangle$ is a Hopf pairing, and $\langle\overline Z,y\rangle=0$ for all homogeneous $y\in C$ with $0\neq[y]$ in $K_0$.

Now extend $\{\theta_j\}\setminus\{u_i\}$ to an orthogonal basis $f_j$ of $\overline Z$. If we have
\[ \sum_jf_jy_j=0 \quad\textrm{with }y_j\in C, \]
then
\[ 0 = \langle\sum_jf_jy_j,f_iy_i\rangle = \langle f_i,f_i\rangle\langle y_i,y_i\rangle, \]
whence $y_i=0$. Thus $H\cong\overline Z\otimes C$ as claimed. In particular we have the vector space decomposition
\[ H=\overline Z\oplus\big(\sum_iu_iH\big). \]

We can now deduce Schiffmann's characterisation
\[ \overline Z = \bigcap_i\mathrm{Ker}(e_i'), \]
where $e_i'$ are given by\footnote{
These operators are used by Kashiwara to define the crystal operators $\tilde e_i,\tilde f_i$ on the quantum group \cite{Kashiwara}.
}
\[ \langle e_i'(x),y\rangle := \langle x,u_iy\rangle. \]
Since the bilinear form is non-degenerate, this determines the operator $e_i'$ uniquely. Also, using that the form is a Hopf pairing, we have for homogeneous $x$ and $y$ that
\[ \Delta(x) = e_i'(x)K_i\otimes u_i + \textrm{other terms}, \]
where the other terms are linear combinations of the form $u_{\blambda}\otimes u_{\bmu}$ for $\bmu\neq i$, and
\[ e_i'(xy) = e_i'(x)y+v^{(u_i,x)}xe_i'(y). \]
It follows that $\bigcap_i\mathrm{Ker}(e_i')$ is a subalgebra of $H$.

Clearly each $\theta_j\neq u_i$ lies in $\mathrm{Ker}(e_i')$ since it is orthogonal to $u_iH$. Thus $\overline Z\subset\bigcap_i\mathrm{Ker}(e_i')$. On the other hand, if $x\in\bigcap_i\mathrm{Ker}(e_i')$, then we can write $x=\bar x+y$ with $y\in\sum_iu_iH$ and $\bar x\in\overline Z$. Hence $y=x-\bar x\in\bigcap_i\mathrm{Ker}(e_i')$, so $\langle y,y\rangle=0$, whence $y=0$.

So far we have shown that $H\cong\overline Z\otimes C$ with $\overline Z$ a central Hopf subalgebra generated in degrees $r\delta$ and $C$ the composition subalgebra, which is also a Hopf subalgebra. Moreover, $\overline Z=\bigcap_i\mathrm{Ker}(e_i')$.

We now use the explicit description of the $x_r$ to show that
\[ \langle e_i'(x_r),u_{\blambda}\rangle = \langle x_r,u_iu_{\blambda}\rangle = 0 \]
for all $u_{\blambda}$. Thus $x_r\in\bigcap_i\mathrm{Ker}(e_i')=\overline Z$, and in particular, they are all central.

Analogously to Proposition \ref{x_lambda} we compute that the minimal term in the product $x_\mu:=\prod_ix_{\mu_i}$ is precisely $u_{\bmu}$, where $\bmu=(\mu,\ldots,\mu)$. Using this we can prove that the $x_r$ are algebraically independent, that they generate the whole of the centre of $H$, and that
\[ \Delta(x_r) = \sum_{a+b=r}x_a\otimes x_b. \]
Finally, we calculate that
\[ \langle x_r,x_r\rangle = 1-v^{2n} \quad\textrm{for }r\geq1. \]

Thus
\[ \overline Z=\mathbb Q(v)[x_1,x_2,\ldots]=Z \]
is the centre of $H$, and there is a monomorphism of self-dual Hopf algebras
\[ \Phi_n\colon\Lambda[t]\to Z_n, \quad t\mapsto v^{-2n}, \quad c_r\mapsto x_r. \]

This completes the proof.

\subsection{Integral Bases}

Integral bases for the composition algebra $C_n$ have been given in \cite{Ringel6} in terms of condensed words, with a more thorough treatment given in \cite{Deng-Du} in terms of the more general distinguished words.

We would also like to study the canonical basis for the Ringel-Hall algebra $H_n$. We again set
\[ \tilde u_{\blambda} := v^{h_{\blambda\blambda}-|\blambda|}u_{\blambda}, \quad\textrm{where }|\blambda|=\sum_i|\lambda^i|, \]
which will be our basis of PBW-type.

Now, as mentioned in Section \ref{HP2}, the semisimple modules generate the Ringel-Hall algebra so we can again define the bar involution by specifying that it swaps $v$ and $v^{-1}$ and fixes each $\tilde u_{\blambda}$ whenever $M_{\blambda}$ is semisimple, which is if and only if each $\blambda=\big((1^{m_1}),\ldots,(1^{m_n})\big)$ for some integers $m_i$. In this case, however, it is non-trivial to deduce that this does indeed define a ring isomorphism. For, the semisimples $\tilde u_{\blambda}$ are no longer algebraically independent. This was, however, shown in \cite{Varagnolo-Vasserot}.

It is not hard to show that the transition matrix from any basis consisting of products of semisimples to the basis of PBW-type is upper-triangular with ones on the diagonal. It follows that
\[ \overline{\tilde u_{\blambda}^{\phantom M}} = \tilde u_{\blambda} + \sum_{\bmu<\blambda}\gamma_{\blambda\bmu}\tilde u_{\bmu}, \]
where $\bmu<\blambda$ if and only if $M_{\blambda}<_d M_{\bmu}$, analogous to the case for $n=1$.

We can therefore define the canonical basis $b_{\blambda}$ as before. A detailed study of this was done in \cite{Deng-Du-Xiao}. Also, it was shown in \cite{Schiffmann1} that the centre $Z_n$ also has a nice description in terms of the dual canonical basis. This is the basis $b_{\blambda}^\ast$ dual to the canonical basis with respect to the bilinear form $\langle-,-\rangle$. We call a multipartition $\blambda=(\lambda^1,\ldots,\lambda^n)$ aperiodic provided that for each $r$ some $m_r(\lambda^i)=0$; that is, some $\lambda^i$ contains no part of size $r$. At the other extreme, we call $\blambda$ completely periodic provided that $\lambda^1=\cdots=\lambda^n$.

\begin{Thm}[Schiffmann]
\[ Z_n = \bigoplus_\lambda \mathbb Q(v)b_{(\lambda,\ldots,\lambda)}^*, \]
where the sum is taken over all partitions, so the $(\lambda,\ldots,\lambda)$ are completely periodic multipartitions.
\end{Thm}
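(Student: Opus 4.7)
The plan is to combine the characterisation $Z_n = \bigcap_i \mathrm{Ker}(e_i')$ established in the outline above with the theorem of Lusztig and Leclerc-Thibon-Vasserot identifying the canonical basis of the composition subalgebra $C_n$ as $\{b_{\blambda} : \blambda \text{ aperiodic}\}$, and to upgrade this to a compatibility of the full canonical basis of $H_n$ with the tensor decomposition $H_n \cong Z_n \otimes C_n$.

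First I would translate the statement into a pairing condition. Since $e_i'$ is adjoint to left multiplication by $u_i$ and $\{b_{\blambda}\}, \{b_{\blambda}^*\}$ are dual bases, the membership $b_{\blambda}^* \in Z_n$ is equivalent to the coefficient of $b_{\blambda}$ in the canonical basis expansion of $u_i b_{\bmu}$ being zero for every $i$ and every multipartition $\bmu$. The strategy is then to establish the structural claim that, writing any multipartition as $\blambda = (\nu,\ldots,\nu) \cup \blambda^{\mathrm{ap}}$, where $m_r(\nu) = \min_i m_r(\lambda^i)$ and $\blambda^{\mathrm{ap}}$ is the aperiodic remainder, one has a factorisation $b_{\blambda} = b_{(\nu,\ldots,\nu)} \cdot b_{\blambda^{\mathrm{ap}}}$ with $b_{(\nu,\ldots,\nu)} \in Z_n$ and $b_{\blambda^{\mathrm{ap}}} \in C_n$.

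Granting this factorisation, the theorem follows: $b_{(\lambda,\ldots,\lambda)} \in Z_n$ is the case $\nu = \lambda$ and $\blambda^{\mathrm{ap}} = \emptyset$, while any $b_{\blambda}$ with $\blambda$ not completely periodic has a nontrivial aperiodic factor and so lies in $Z_n \cdot C_n^+$. Using the multiplicativity $\langle z c, z' c'\rangle = \langle z, z'\rangle \langle c, c'\rangle$ of the Hopf pairing on $Z_n \otimes C_n$ (proved in the outline), the dual basis factorises the same way, placing $b_{(\lambda,\ldots,\lambda)}^* \in Z_n$ for every partition $\lambda$ and forcing $b_{\blambda}^* \notin Z_n$ otherwise. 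A graded dimension count -- the degree $r\delta$ piece of $Z_n = \bQ(v)[x_1,x_2,\ldots]$ has rank $p(r)$, which equals the number of completely periodic multipartitions of that degree -- then promotes spanning to the direct sum equality in the statement.

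The main obstacle is proving the factorisation $b_{\blambda} = b_{(\nu,\ldots,\nu)} \cdot b_{\blambda^{\mathrm{ap}}}$, and in particular the fact that $b_{(\nu,\ldots,\nu)} \in Z_n$. Unlike the case $n = 1$, where Proposition \ref{HL-fns} identifies $b_\lambda = \Phi_1(s_\lambda)$ via a direct triangularity argument on a single-part PBW basis, for $n \geq 2$ the image $\Phi_n(P_\lambda(t))$ is an elaborate combination of multipartition PBW vectors, as visible in the explicit formulas for $x_{n,r}$ displayed after Theorem \ref{centre}. One must show that the candidate $\Phi_n(s_\lambda) \in Z_n$ has leading term $\tilde u_{(\lambda,\ldots,\lambda)}$ in the degeneration order on multipartitions with $v^{-1}\mathbb{Z}[v^{-1}]$-triangular remainder, so that the uniqueness characterisation of the canonical basis identifies it with $b_{(\lambda,\ldots,\lambda)}$. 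This is precisely the combinatorial input provided by the Varagnolo-Vasserot and Leclerc-Thibon-Vasserot analysis of the Fock space action of $U_v(\widehat{\mathfrak{sl}}_n)$, on which Schiffmann's original proof essentially depends.
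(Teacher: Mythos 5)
The paper does not prove this theorem; it quotes it from Schiffmann's paper \cite{Schiffmann1}. So my job is to assess your proposal on its own terms, and there is a genuine gap.

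Your structural claim is that the canonical basis factorises, $b_{\blambda} = b_{(\nu,\ldots,\nu)}\cdot b_{\blambda^{\mathrm{ap}}}$, with $b_{(\nu,\ldots,\nu)}\in Z_n$, and then you transfer this factorisation to the dual basis via the multiplicativity of the Hopf pairing across $Z_n\otimes C_n$. The premise is false: the canonical basis element $b_{(\lambda,\ldots,\lambda)}$ indexed by a completely periodic multipartition does \emph{not} lie in $Z_n$ in general. You can see this directly from the example the paper itself supplies for $n=2$ in degree $\delta$: there $b_{(1,1)}=u_{(1,1)}$, while the degree-$\delta$ part of $Z_2$ is one-dimensional and spanned by $x_1 = -q^{-1}(1-q^{-1})\bigl(u_{(20)}+u_{(02)}\bigr)+(1-q^{-1})^2u_{(11)}$, which has nonzero coefficients on $u_{(20)}$ and $u_{(02)}$. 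Hence $u_{(1,1)}$ is not proportional to $x_1$ and so $b_{(1,1)}\notin Z_2$. The paper is quite careful about this distinction; it is exactly why Conjecture \ref{Conj} is phrased in terms of the orthogonal projection $\pi(b_{(\lambda,\ldots,\lambda)})$ rather than $b_{(\lambda,\ldots,\lambda)}$ itself.

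What is actually true (and is the content of Schiffmann's/Leclerc--Thibon--Vasserot's analysis) is a factorisation on the \emph{dual} side: the dual canonical basis elements $b_{\blambda}^*$ factor as a central element indexed by the periodic part times an element of $C_n$ indexed by the aperiodic part, with the completely periodic ones landing in $Z_n$. Your translation of $b_{\blambda}^*\in Z_n$ into a vanishing condition on the coefficient of $b_{\blambda}$ in $u_i b_{\bmu}$ is fine, and the dimension count at the end (rank of the degree-$r\delta$ piece of $Z_n$ equals $p(r)$ equals the number of completely periodic multipartitions of that degree) is also fine. But the combinatorial heart of the matter is establishing the dual-basis factorisation directly, using the Fock space action and the crystal/LTV theory; it cannot be reduced, as you attempt, to a corresponding factorisation of the canonical basis, since that factorisation does not put $b_{(\nu,\ldots,\nu)}$ inside $Z_n$.
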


In his survey article \cite{Schiffmann4}, Schiffmann poses a question about the canonical basis and dual canonical basis elements corresponding to completely periodic functions. If we recall the vector space decomposition
\[ H_n = Z_n\oplus\big(\sum_iu_iH\big) \]
and let $\pi\colon H_n\to Z_n$ be the orthogonal projection onto the centre, then we know that for each partition $\lambda$, both $b_{(\lambda,\ldots,\lambda)}^\ast$ and $\pi(b_{(\lambda,\ldots,\lambda)})$ lie in $Z_n$.

We formulate the following conjecture, making Schiffmann's question more precise.

\begin{Conj}\label{Conj}
For each $n\geq1$ we have
\[ b_{(\lambda,\ldots,\lambda)}^\ast = \Phi_n(S_\lambda(t)) \quad\textrm{and}\quad \pi(b_{(\lambda,\ldots,\lambda)}) = \Phi_n(s_\lambda). \]
In particular,
\[ x_r = b_{(r,\ldots,r)}^\ast. \]
\end{Conj}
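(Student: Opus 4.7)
My plan is to first reduce the two identities to each other using Hopf pairing duality, and then to focus on proving one of them. By Schiffmann's theorem the elements $\{b^*_{(\lambda,\ldots,\lambda)}\}$ form a $\bQ(v)$-basis of $Z_n$. Since $Z_n = \bigcap_i\mathrm{Ker}(e_i')$ is orthogonal to $\sum_i u_i H_n$ with respect to the Hopf pairing, the orthogonal projection $\pi$ satisfies $\langle \pi(x), z\rangle = \langle x, z\rangle$ for all $z \in Z_n$. Combined with the duality $\langle b_{\bmu}, b^*_{\blambda}\rangle = \delta_{\bmu\blambda}$, this shows that $\{\pi(b_{(\mu,\ldots,\mu)})\}$ is the basis of $Z_n$ dual to $\{b^*_{(\lambda,\ldots,\lambda)}\}$, and moreover that $\pi(b_{\bmu}) = 0$ whenever $\bmu$ is not completely periodic. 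Meanwhile, as noted in the introduction, $\Phi_n$ preserves the Hopf pairing, and $\{s_\lambda\}, \{S_\mu(t)\}$ are dual bases of $\Lambda[t]$. Hence, after extending scalars as in Theorem \ref{centre}, the images $\{\Phi_n(s_\lambda)\}$ and $\{\Phi_n(S_\mu(t))\}$ form dual bases of $Z_n$. By uniqueness of dual bases, the two identities in the conjecture are equivalent, and each implies the special case $x_r = b^*_{(r,\ldots,r)}$.

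I would then attack $b^*_{(\lambda,\ldots,\lambda)} = \Phi_n(S_\lambda(t))$, starting with the base case $\lambda = (r)$, where $S_{(r)}(t) = c_r(t)$ and $\Phi_n(c_r(t)) = x_r$ by definition of $\Phi_n$. Here one must show directly that the explicit formula for $x_r$ in Theorem \ref{centre}(2) matches the dual canonical basis element $b^*_{(r,\ldots,r)}$. Using $\langle b^*_{(r,\ldots,r)}, b_{\bmu}\rangle = \delta_{\bmu,(r,\ldots,r)}$ together with the PBW-triangularity $b_{\blambda} = \tilde u_{\blambda} + \sum_{\bmu <_d \blambda} \beta_{\blambda\bmu}(v^{-1}) \tilde u_{\bmu}$, this amounts to verifying that the coefficients $(-v^{-2})^{rn}(-1)^{h_{\blambda\blambda}} a_{\blambda}(v^2)$ appearing in $x_r$ are exactly those dictated by the dual canonical basis, summed over the multipartitions $\blambda$ with square-free socle and dimension $r\delta$.

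To pass from the case $\lambda = (r)$ to general $\lambda$, I would seek a Jacobi--Trudi-type identity expressing $b^*_{(\lambda,\ldots,\lambda)}$ as a determinant in the $b^*_{(r,\ldots,r)}$, mirroring the symmetric function identity $S_\lambda(t) = \det(c_{\lambda_i-i+j}(t))$. Since $\Phi_n$ is a Hopf algebra homomorphism, once such a determinantal identity is established on the Hall algebra side, the general case follows from the base case by applying $\Phi_n$ to the symmetric function relation. A natural alternative route uses the Fock space representation of $H_n$ of Varagnolo--Vasserot, in which completely periodic dual canonical basis elements have been studied via plethystic substitutions and vertex operator techniques; in that setting a Jacobi--Trudi identity reduces to relations between such operators.

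The main obstacle I anticipate is establishing that $\Phi_n(S_\lambda(t))$ has the bar-invariant, PBW-triangular expansion required to coincide with a dual canonical basis element. In the case $n=1$ this is the content of Proposition \ref{HL-fns} together with the identification of the canonical basis with Schur functions, and the argument uses crucially that $\Phi_1(e_r) = v^{r(r-1)} u_{(1^r)}$ is a scalar multiple of a semisimple module, hence automatically bar-invariant. For $n \geq 2$ no analogously clean formula for the images of the elementary symmetric functions is visible, and one seems forced to appeal to Fock space computations in the style of Leclerc--Thibon--Vasserot. Bridging this gap between the symmetric-function side and the dual canonical basis side is the substantive content of the conjecture.
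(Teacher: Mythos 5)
This statement is a \emph{conjecture}, and the paper does not prove it. What the paper offers is only partial evidence: the identity $S_{(r)}(t)=c_r(t)$ gives $\Phi_n(S_{(r)}(t))=x_r$, a one-sided pairing calculation shows $\langle x_r, b_{\blambda}\rangle = \delta_{\blambda,(r,\ldots,r)}$ for $\blambda\leq(r,\ldots,r)$, and the case $n=2$, $r=1$ is checked explicitly. Your proposal likewise does not prove the conjecture, as you acknowledge in your final paragraph, so there is nothing in the paper to compare against for the substantive step.

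That said, your opening reduction is correct and is in fact a genuine observation beyond what the paper records. Using that $\pi$ is the orthogonal projection onto $Z_n$ (so $\langle \pi(x),z\rangle = \langle x,z\rangle$ for $z\in Z_n$), that the canonical and dual canonical bases are dual under the Hopf pairing, and that $\Phi_n$ preserves the pairing with $\{s_\lambda\}$ and $\{S_\mu(t)\}$ dual in $\Lambda[t]$, one does obtain that the two displayed identities of the Conjecture are equivalent, and moreover that $\pi(b_{\bmu})=0$ for every non-completely-periodic $\bmu$. (The only caveat is that the $\Phi_n(S_\lambda(t))$ form a $Z_n$-basis only after extending scalars to $\bQ(t^{1/2n})$, as the paper notes; this suffices for the uniqueness-of-dual-bases argument since both sides already lie in $Z_n$ over $\bQ(v)$.) The paper states the two identities side by side without pointing out that either implies the other, so this reduction is a useful addition.

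The remaining gap is exactly where you place it, and it is real: even for $\lambda=(r)$, the paper only verifies the pairing $\langle x_r,b_{\blambda}\rangle=\delta_{\blambda,(r,\ldots,r)}$ for $\blambda\leq(r,\ldots,r)$, which is far from showing $x_r=b^*_{(r,\ldots,r)}$ (one must also handle $\blambda$ incomparable with $(r,\ldots,r)$ of degree $r\delta$, and more seriously the bar-invariance and triangularity defining $b^*$). Your proposed Jacobi--Trudi route to reduce general $\lambda$ to $\lambda=(r)$ is natural given $S_\lambda(t)=\det(c_{\lambda_i-i+j}(t))$, but it presupposes that a determinant of central elements $x_{r}$ equals a dual canonical basis element, which is again precisely the content to be proved. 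The appeal to Fock-space / Leclerc--Thibon--Vasserot methods is the right direction to look, but as written your proposal leaves the conjecture open, just as the paper does.
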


We know that $S_{(r)}(t)=c_r(t)$, so that $\Phi_n(S_{(r)}(t))=x_r$. This has minimal term corresponding to the completely periodic multipartition $(r,\ldots,r)$, and
\[ a_{(r,\ldots,r)} = q^{rn}(1-q^{-1})^n \quad\textrm{and}\quad h_{(r,\ldots,r)(r,\ldots,r)}=|(r,\ldots,r)|=rn, \]
so that
\[ x_r = (1-q^{-1})^nu_{(r,\ldots,r)} + \textrm{higher terms}, \]
where the higher terms correspond to multipartitions $\blambda>(r,\ldots,r)$. On the other hand, we know that
\[ b_{(r,\ldots,r)} = u_{(r,\ldots,r)} + \textrm{lower terms}, \]
where the lower terms correspond to multipartitions $\blambda<(r,\ldots,r)$. It follows that
\[ \langle x_r,b_{\blambda}\rangle = \delta_{\blambda(r,\ldots,r)} \quad\textrm{whenever }\blambda\leq(r,\ldots,r). \]
This offers some (minimal) support for the conjecture. One can however compute the first few canonical basis elements for $n=2$ and see that the conjecture does hold there. For example
\[ b_{(2,0)} = v^{-1}u_{(2,0)}+v^{-1}u_{(1,1)}, \quad b_{(0,2)} = v^{-1}u_{(2,0)}+v^{-1}u_{(1,1)}, \quad b_{(1,1)} = u_{(1,1)}. \]
Therefore, since
\[ x_1 = -q^{-1}(1-q^{-1})\big(u_{(20)}+u_{(02)}\big)+(1-q^{-1})^2u_{(11)}, \]
we see that
\[ b_{(1,1)}^\ast = x_1 = \Phi_2(S_{(1)}(t)). \]

We finish by mentioning that the crystal graph of the canonical basis for $H_n$ was determined in \cite{Leclerc-Thibon-Vasserot}. This describes how the operators $\tilde e_i$ (or equivalently $e_i'$) act on the canonical basis. The authors prove that the crystal graph decomposes into infinitely many components, labelled by the periodic multipartitions. Moreover, the aperiodic partitions form one connected component, isomorphic to the crystal graph of type $\tilde{\mathbb A}_{n-1}$, as was shown in \cite{Lusztig5}.

\end{document}